\newcommand{\Cdb}{\ensuremath{\mathbb{C}}}
\renewcommand{\H}{\mbox{${\mathcal H}$}}
\renewcommand{\S}{\mbox{${\mathcal S}$}}
\newcommand{\X}{\mbox{${\mathcal X}$}}
\newcommand{\norm}[1]{\Vert#1\Vert}
\newcommand{\bignorm}[1]{\bigl\Vert#1\bigr\Vert}
\newcommand{\Bignorm}[1]{\Bigl\Vert#1\Bigr\Vert}
\newcommand{\Biggnorm}[1]{\Biggl\Vert#1\Biggr\Vert}
\newcommand{\cbnorm}[1]{\Vert#1\Vert_{cb}}
\newtheorem{theorem}{Theorem}[section]
\newtheorem{lemma}[theorem]{Lemma}
\newtheorem{corollary}[theorem]{Corollary}
\newtheorem{proposition}[theorem]{Proposition}
\theoremstyle{remark}
\newtheorem{remark}[theorem]{\bf Remark}
\theoremstyle{definition}
\numberwithin{equation}{section}
\begin{document}

\title[operator multipliers]
{Bilinear operator multipliers into the trace class}

\author[C. Le Merdy]{Christian Le Merdy}
\email{clemerdy@univ-fcomte.fr}
\address{Laboratoire de Math\'ematiques de Besan\c con, UMR 6623,
CNRS, Universit\'e Bourgogne Franche-Comt\'e,
25030 Besan\c{c}on Cedex, France}
\author[I. Todorov]{Ivan G. Todorov}
\email{i.todorov@qub.ac.uk}
\address{Mathematical Sciences Research Center, Queen's University Belfast, Belfast BT7 1NN, United Kingdom}
\author[L. Turowska]{Lyudmila Turowska}
\email{turowska@chalmers.se}
\address{Department of Mathematical Sciences, Chalmers University
of Technology and the University of Gothenburg, Gothenburg SE-412 96, Sweden}

\date{\today}

\maketitle

\begin{abstract}
Given Hilbert spaces $H_1,H_2,H_3$,
we consider bilinear maps defined on the cartesian product $S^2(H_2,H_3)\times S^2(H_1,H_2)$
of spaces of Hilbert-Schmidt operators and valued in either the space
$B(H_1,H_3)$ of bounded operators, or in
the space $S^1(H_1,H_3)$ of trace class operators. We introduce modular properties
of such maps with respect to the commutants of von Neumann algebras $M_i\subset B(H_i)$, $i=1,2,3$,
as well as an appropriate notion of complete boundedness for such maps. We characterize
completely bounded module maps $u\colon S^2(H_2,H_3)\times S^2(H_1,H_2)\to B(H_1,H_3)$
by the membership of a natural symbol of $u$ to the
von Neumann algebra tensor product $M_1\overline{\otimes} M_2^{op}\overline{\otimes}
M_3$. In the case when $M_2$ is injective, we
characterize completely bounded module maps $u\colon S^2(H_2,H_3)\times S^2(H_1,H_2)\to S^1(H_1,H_3)$
by a weak factorization property, which extends to the bilinear setting a famous
description of bimodule linear mappings
going back to Haagerup, Effros-Kishimoto, Smith and Blecher-Smith. We make crucial
use of a theorem of Sinclair-Smith
on completely bounded bilinear
maps valued in an injective von Neumann
algebra, and provide a new proof of it, based on Hilbert $C^*$-modules.
\end{abstract}

\vskip 1cm
\noindent
{\it 2000 Mathematics Subject Classification:} 46L07, 46B28, 47D25, 46L08.

\vskip 1cm

\section{Introduction}\label{1Intro} Factorization properties of completely bounded
maps have played a prominent role in the development of operator spaces \cite{BLM, ER, P2}
and in their
applications to Hilbertian operator theory, in particular for the study of
special classes of operators: Schur multipliers, Fourier multipliers on either
commutative or non commutative groups,
module maps, decomposable maps, etc. The main purpose of this paper is to establish new
such factorization properties for some classes of bilinear maps
defined on the cartesian product $S^2(H_2,H_3)\times S^2(H_1,H_2)$
of two spaces of Hilbert-Schmidt operators and valued in their
``product space", namely
the space $S^1(H_1,H_3)$ of trace class operators. This line
of investigation is motivated by the recent characterization of
bounded bilinear Schur multipliers $S^2\times S^2\to
S^1$ proved in \cite{CLPST, CLS}, by various advances on multidimensional
operator multipliers, see \cite{KS, JTT}, and by new developments
on  multiple operator integrals,
see e.g. \cite{AP} and the references therein.

Let $H,K$ be Hilbert spaces and let $M,N$ be von Neumann
algebras acting on $H$ and $K$, respectively.
Let $CB_{(N',M')}(S^1(H,K))$ denote the
Banach space of all
$(N',M')$-bimodule completely bounded maps on
$S^1(H,K)$, equipped with the completely bounded norm $\cbnorm{\, \cdotp}$.
This space is characterized by the following factorization property.

\begin{theorem}\label{Haag}
A bounded map $u\colon S^1(H,K)\to S^1(H,K)$
belongs to $CB_{(N',M')}(S^1(H,K))$ and $\cbnorm{u}\leq 1$ if and only
if there exist an index set $I$, a family $(a_i)_{i\in I}$
of elements of $M$ belonging to the row space
$R_I^w(M)$ and a family $(b_i)_{i\in I}$
of elements of $N$ belonging to the column space
$C_I^w(N)$ such that
$$
u(z)= \sum_{i\in I} b_i z a_i,\qquad
z\in S^1(H,K),
$$
and $\norm{(a_i)_i}_{R_I^w}
\norm{(b_i)_i}_{C_I^w} \leq 1$.
\end{theorem}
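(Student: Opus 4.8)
The plan is to deduce Theorem~\ref{Haag} from the corresponding, and classical, description of \emph{normal} completely bounded bimodule maps on the full operator space $B(H,K)$, passing between the two formulations by duality. Throughout I use the trace duality $S^1(H,K)^*=B(H,K)$ given by $\langle z,y\rangle=\mathrm{tr}(y^*z)$ for $z\in S^1(H,K)$ and $y\in B(H,K)$, under which $S^1(H,K)$ is the operator space predual of $B(H,K)$. To a bounded $u\colon S^1(H,K)\to S^1(H,K)$ I associate its Banach-space adjoint $u^*\colon B(H,K)\to B(H,K)$, which is automatically weak-$*$ continuous; under the predual operator space structure the map $u\mapsto u^*$ is a complete isometry between the space $CB(S^1(H,K))$ and the space of normal completely bounded maps on $B(H,K)$, so that $\cbnorm{u}=\cbnorm{u^*}$.

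First I would check that the module properties match up. A direct computation with the trace pairing shows that $u(czd)=c\,u(z)\,d$ holds for all $z\in S^1(H,K)$, $c\in N'$ and $d\in M'$ if and only if $u^*(cxd)=c\,u^*(x)\,d$ holds for all $x\in B(H,K)$, $c\in N'$ and $d\in M'$. Consequently $u\in CB_{(N',M')}(S^1(H,K))$ with $\cbnorm{u}\le 1$ if and only if $u^*$ is a normal, completely bounded $(N',M')$-bimodule map on $B(H,K)$ with $\cbnorm{u^*}\le 1$.

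At this point I would invoke the theorem of Haagerup, Effros--Kishimoto, Smith and Blecher--Smith: such a map $u^*$ admits a representation $u^*(x)=\sum_{i\in I}b_i\,x\,a_i$ with $(a_i)_i\in C_I^w(M)$ and $(b_i)_i\in R_I^w(N)$, the sum converging in the point-weak-$*$ topology, and with $\norm{(a_i)_i}_{C_I^w}\norm{(b_i)_i}_{R_I^w}=\cbnorm{u^*}\le 1$. Dualizing this representation back through the trace pairing yields $u(z)=\sum_{i\in I}b_i^*\,z\,a_i^*$ for $z\in S^1(H,K)$. Taking adjoints interchanges the row and column spaces, so that $(a_i^*)_i\in R_I^w(M)$ and $(b_i^*)_i\in C_I^w(N)$ with $\norm{(a_i^*)_i}_{R_I^w}=\norm{(a_i)_i}_{C_I^w}$ and $\norm{(b_i^*)_i}_{C_I^w}=\norm{(b_i)_i}_{R_I^w}$; renaming the families as $(b_i^*)_i\subset N$ and $(a_i^*)_i\subset M$ gives exactly the asserted factorization with $\norm{(a_i^*)_i}_{R_I^w}\norm{(b_i^*)_i}_{C_I^w}\le 1$. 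The converse implication is the dual statement, but it can also be verified directly: the module identity is immediate from the commutation relations $b_ic=cb_i$ and $da_i=a_id$, the series $\sum_i b_iza_i$ converges in $S^1$-norm by a Cauchy--Schwarz estimate on rank-one $z$, and the bound $\cbnorm{u}\le\norm{(a_i)_i}_{R_I^w}\norm{(b_i)_i}_{C_I^w}$ follows from a standard factorization of $z\mapsto\sum_i b_iza_i$ through the amplification $z\mapsto z\otimes\mathrm{id}_{\ell^2_I}$, which is a complete isometry on $S^1$.

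The only genuinely nontrivial ingredient is the cited representation theorem on $B(H,K)$, which rests on the Stinespring--Wittstock representation of completely bounded maps together with a commutant argument forcing the module structure. The part requiring care on my side is essentially bookkeeping: tracking the trace duality between $CB(S^1(H,K))$ and the normal completely bounded maps on $B(H,K)$, verifying the preservation of the bimodule property under the adjoint, and using that passing to adjoints exchanges $R_I^w$ and $C_I^w$, so that the weak row and column constraints land on the correct sides while the product of the two weak norms is preserved.
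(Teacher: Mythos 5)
Your proof is correct and follows essentially the same route as the paper: both pass through trace duality and then invoke the classical Haagerup--Effros--Kishimoto--Smith--Blecher--Smith representation theorem, which the paper cites as \cite[Theorem 2.2]{BS} for $(M',N')$-bimodule completely bounded maps $S^\infty(K,H)\to B(K,H)$, while you use the equivalent formulation for normal completely bounded bimodule maps on all of $B(H,K)$. The remaining difference is cosmetic: your sesquilinear pairing $\mathrm{tr}(y^*z)$ (rather than the bilinear pairing with $B(K,H)$) is what forces the adjoints $a_i^*,b_i^*$ and the row/column exchange at the end, and you handle that bookkeeping consistently.
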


We refer to Section \ref{5SS} for the precise definitions of the spaces $R_I^w(M)$
and $C_I^w(N)$. The above theorem is a reformulation of \cite[Theorem 2.2]{BS},
a fundamental factorization result going
back to \cite{EK, H} (see also \cite{Sm}). Indeed let $B(K,H)$
(resp. $S^\infty(K,H)$) denote the space of all bounded operators
(resp. all compact operators) from $K$ into $H$.
Then by standard operator space duality,
the adjoint mapping $u\mapsto u^*$ induces
an isometric isomorphism between
$CB_{(N',M')}(S^1(H,K))$ and the space
$CB_{(M',N')}(S^\infty(K,H), B(K,H))$ of all
$(M',N')$-bimodule completely bounded maps from $S^\infty(K,H)$
into $B(K,H)$. Consequently the description of such maps
provided by \cite[Theorem 2.2]{BS} yields Theorem \ref{Haag}.
Using the so-called
weak$^*$ Haagerup tensor product $\stackrel{w^*h}{\otimes}$
introduced in \cite{BS}, an equivalent formulation of Theorem \ref{Haag}
is that we have a natural isometric $w^*$-homeomorphic identification
\begin{equation}\label{Haag+}
M\stackrel{w^*h}{\otimes} N\,\simeq \,
CB_{(N',M')}(S^1(H,K)).
\end{equation}

In this paper we consider three Hilbert spaces $H_1,H_2,H_3$
as well as von Neumann algebras $M_1,M_2,M_3$ acting on
them. We study
bilinear $(M'_3,M'_2,M'_1)$-module maps
$$
u\colon S^2(H_2,H_3)\times S^2(H_1,H_2)\longrightarrow
S^1(H_1,H_3),
$$
in the sense that $u(Ty,SxR)=Tu(yS,x)R$
for any $x\in S^2(H_1,H_2)$, $y\in S^2(H_2,H_3)$, 
$R\in M'_1$, $S\in M'_2$ and $T\in M'_3$.
In the case when $H_i=L^2(\Omega_i)$ for 
some measure spaces $\Omega_i$, $i=1,2,3$,
and $M_i=L^\infty(\Omega_i)\subset B(L^2(\Omega_i))$ in the usual way, bilinear
$(M'_3,M'_2,M'_1)$-module maps coincide with the bilinear Schur multipliers
discussed in \cite{JTT, CLS}.

On the projective tensor product
$S^2(H_2,H_3)\widehat{\otimes}S^2(H_1,H_2)$,
we introduce a natural operator space structure,
denoted by $\Gamma(H_1,H_2,H_3)$, see  (\ref{2OS-Gamma}).
Our main result, Theorem \ref{6Factorization}, is a characterization,
in the case when $M_2$ is injective, of completely bounded
$(M'_3,M'_2,M'_1)$-module maps $u$ as above
by a weak factorization property, which extends Theorem \ref{Haag}. 
(see Remark \ref{6Recover}).
This characterization is already new
in the non module case (that is, when
$M_i=B(H_i)$ for $i=1,2,3$).

The proof of this result has two steps.
First we establish an isometric and $w^*$-homeomorphic
identification
\begin{equation}\label{1Id}
M_2^{op}\overline{\otimes}\bigl(
M_1\stackrel{w^*h}{\otimes}M_3\bigr)\,\simeq \,
CB_{(M'_3,M'_2,M'_1)}\bigl(\Gamma(H_1,H_2,H_3), S^1(H_1,H_3)\bigr)
\end{equation}
which extends (\ref{Haag+}), see Theorem \ref{3OM-S1}.
Second we make use of a remarkable factorization result
of Sinclair-Smith \cite{SS} on completely bounded bilinear
maps valued in an injective von Neumann
algebra (see Theorem \ref{4SS} for the precise statement), as well as
operator space results, to derive
Theorem \ref{6Factorization} from (\ref{1Id}).

The Sinclair-Smith theorem, which plays a key role in this
paper, was proved in \cite[Theorem 4.4]{SS} using tensor product computations, the
Effros-Lance characterization of semidiscrete von Neumann algebras \cite{EL}
and Connes's fundamental result (completed in \cite{W}) that any injective von Neumann
algebra is semidiscrete.
In Section \ref{5SS} below, we give a new, much shorter proof of Theorem \ref{4SS}
based on Hilbert $C^*$-modules.

The paper also contains a thorough study of completely bounded
bilinear $(M'_3,M'_2,M'_1)$-module maps
$$
u\colon S^2(H_2,H_3)\times S^2(H_1,H_2)\longrightarrow
B(H_1,H_3).
$$
In analogy with (\ref{1Id}) we show that the space of such maps can be identified
with the von Neumann algebra tensor product $M_1\overline{\otimes} M_2^{op}\overline{\otimes}
M_3$, see Corollary \ref{3Mod-B}.

\section{Operator space and duality preliminaries}\label{20S}

We start with some general principles and conventions
which will be used throughout this paper.

Let $E,F$ and $G$ be Banach spaces. We let $E\otimes F$
denote the algebraic tensor product of $E$ and $F$.
We let $B(E,G)$ denote the Banach space
of all bounded operators from $E$ into $G$. We let $B_2(F\times E,G)$
denote the Banach space of all bounded bilinear operators from
$F\times E$ into $G$.

Let $F\widehat{\otimes} E$ be the projective tensor product of $F$ and $E$.
To any $u\in B_2(F\times E,G)$, one can associate a unique
$\widetilde{u}\colon F\otimes E\to G$ satisfying
$$
\widetilde{u}(y\otimes x)=u(y,x),
\qquad x\in E,\ y\in F.
$$
Then $\widetilde{u}$ extends to a bounded operator (still denoted by)
$\widetilde{u}\colon F\widehat{\otimes} E\to G$ and we have equality
$\norm{\widetilde{u}}=\norm{u}$. Then the mapping $u\mapsto \widetilde{u}$
yields an isometric identification
\begin{equation}\label{1Duality1}
B_2(F\times E,G) \,\simeq\, B(F\widehat{\otimes} E,G).
\end{equation}

Consider the case $G=\Cdb$. Then (\ref{1Duality1})
provides an isometric identification
$B_2(F\times E,\Cdb)
\,\simeq\, (F\widehat{\otimes} E)^*$.
Now to any bounded bilinear form
$u\colon F\times E\to \Cdb\,$, one can associate two bounded maps
$$
u'\colon E\longrightarrow F^*
\qquad\hbox{and}\qquad
u''\colon F\longrightarrow E^*
$$
defined by $\langle u'(x),y\rangle = u(y,x) = \langle u''(y),x\rangle$
for any $x\in E$ and $y\in F$. Moreover the norms of $u'$ and
$u''$ are equal to the norm of $u$. Hence
the mappings $u\mapsto u'$ and $u\mapsto u''$ yield
isometric identifications
\begin{equation}\label{1Duality3}
(F\widehat{\otimes} E)^*
\,\simeq\, B(E,F^*)\,\simeq\,B(F,E^*).
\end{equation}
We refer to \cite[Chap. 8, Theorem 1 and Corollary 2]{DU} 
for these classical facts.

We assume that the reader is familiar with the basics
of Operator Space Theory and completely bounded maps,
for which we refer to \cite{ER, P2} and \cite[Chap. 1]{BLM}.
However we need to review a few
important definitions and fundamental results
which will be used at length in this paper;
the remainder of this section is devoted to this task.

We will make crucial use of the dual operator
space $E^*$ of an operator space
$E$ as well as of the operator space $CB(E,F)$ of completely bounded
maps from  $E$ into another operator space $F$ (see e.g. \cite[Section 3.2]{ER}).
Whenever $v\colon E\to F$ is a completely bounded map,
its completely bounded norm will be denoted by
$\cbnorm{v}$.

Let $E,F$ be operator spaces. We let $F\stackrel{\frown}{\otimes} E$
denote the operator space projective tensor product of
$F$ and $E$ (here we adopt the notation from \cite[1.5.11]{BLM}).
We will often use the fact that this tensor product is 
commutative
and associative.
The identifications
(\ref{1Duality3}) have operator space analogs. Namely let
$u\colon F\times E\to\Cdb\,$ be a bounded bilinear form. Then
$\widetilde{u}$ extends to a functional on $F\stackrel{\frown}{\otimes} E$
if and only if $u'\colon E\to F^*\,$ is completely bounded, if and
only if $u''\colon F\to E^*$ is completely bounded. In this case
$\cbnorm{u'}=\cbnorm{u''}=\norm{\widetilde{u}}_{(F\stackrel{\frown}{\otimes} E)^*}$.
Thus (\ref{1Duality3}) restricts to isometric identifications
\begin{equation}\label{1Duality4}
(F\stackrel{\frown}{\otimes}E)^*
\,\simeq\, CB(E,F^*)\,\simeq\,CB(F,E^*).
\end{equation}
It turns out that the latter are actually
completely isometric identifications
(see e.g. \cite[Section 7.1]{ER} or \cite[(1.51)]{BLM}).

Let $H,K$ be Hilbert spaces. We let $\overline{K}$ denote the complex conjugate
of $K$.
For any $\xi\in K$, the notation $\overline{\xi}$ stands for
$\xi$ regarded as an element of $\overline{K}$.
We recall the canonical identification $\overline{K}=K^*$. Thus
for any $\xi\in K$ and any $\eta\in H$, $\overline{\xi}\otimes \eta$ may be regarded
as the rank one operator $K\to H$ taking any $\zeta\in K$ to $\langle \zeta,\xi\rangle\eta$.
With this convention, the algebraic tensor product $\overline{K}\otimes H$
is identified with the space of all bounded finite rank operators from $K$
into $H$.

Let $S^1(K,H)$ be the space of trace class operators $v\colon K\to H$, equipped with its
usual norm $\norm{v}_1=tr(\vert v\vert)$. Then $\overline{K}\otimes H$ is a dense subspace
of $S^1(K,H)$ and $\norm{\,\cdotp}_1$ coincides with the Banach space projective norm on
$\overline{K}\otimes H$. Hence we have an isometric identification
\begin{equation}\label{1Proj}
S^1(K,H)\,\simeq\, \overline{K}\widehat{\otimes} H.
\end{equation}

Let
$S^2(K,H)$ be the space of Hilbert-Schmidt operators $v\colon K\to H$, equipped with its
usual Hilbertian norm $\norm{v}_2=\bigl(tr(v^*v)\bigr)^\frac12$.
Then $\overline{K}\otimes H$ is a dense subspace
of $S^2(K,H)$ and $\norm{\,\cdotp}_2$ coincides with the Hilbertian tensor norm on
$\overline{K}\otimes H$. Hence we have an isometric identification
\begin{equation}\label{1HS}
S^2(K,H)\,\simeq\, \overline{K}\stackrel{2}{\otimes} H,
\end{equation}
where the right hand side denotes the Hilbertian tensor product of
$\overline{K}$ and $H$.

Let $S^\infty(H,K)$ denote the space of all compact operators from
$H$ into $K$, equipped with its usual operator space structure. We recall that
through trace duality, we have isometric identifications
\begin{equation}\label{1S11}
S^\infty(H,K)^*
\,\simeq\,
S^1(K,H)
\qquad\hbox{and}\qquad
S^1(K,H)^*
\,\simeq\,
B(H,K).
\end{equation}

Throughout we asssume that $S^1(K,H)$ is equipped with its canonical
operator space structure, so that (\ref{1S11}) holds completely isometrically
(see e.g. \cite[Theorem 3.2.3]{ER}).

Let $E,G$ be Banach spaces and let $j\colon E^*\to G^*$ be a
$w^*$-continuous isometry. Then its range $j(E^*)$ is
$w^*$-closed, hence  $j(E^*)$ is a dual space.
Further $j$ induces a $w^*$-$w^*$-homeomorphism
between $E^*$ and $j(E^*)$ (see e.g. \cite[A.2.5]{BLM}).
Thus $j$ allows to identify  $E^*$ and $j(E^*)$ as dual
Banach spaces.
In this case, we will say that $j$ induces a $w^*$-continuous
isometric identification between $E^*$ and $j(E^*)$. If
$E,G$ are operator spaces and $j$ is a complete isometry,
then $j(E^*)$ is a dual operator space and we will call $j$ a
$w^*$-continuous completely
isometric identification between $E^*$ and $j(E^*)$.

Let $E,F$ be operator spaces
and consider $w^*$-continuous completely isometric embeddings
\begin{equation}\label{1Rep}
E^*\subset B(H)
\qquad\hbox{and}\qquad
F^*\subset B(K),
\end{equation}
for some Hilbert spaces $H,K$ (see e.g. \cite[Prop. 3.2.4]{ER}).
The normal spatial tensor product of the dual operator
spaces $F^*$ and $E^*$ is defined as
the $w^*$-closure of $F^*\otimes E^*$ into the von Neumann algebra
$B(K)\overline{\otimes} B(H)$ and is denoted by
$$
F^*\overline{\otimes} E^*.
$$
This is a dual operator space.
It turns out that its definition
does not depend on the specific embeddings
(\ref{1Rep}), see e.g. \cite[p. 135]{ER}.

We note for further use that the natural embedding
$B(K)\otimes B(H)\subset B(K\stackrel{2}{\otimes} H)$
extends to a $w^*$-continuous
completely isometric identification
\begin{equation}\label{1Normal}
B(K)\overline{\otimes} B(H)
\,\simeq\,
B(K\stackrel{2}{\otimes} H).
\end{equation}

To deal with normal spatial tensor products, it is convenient to
use the so-called slice maps.
Take any $\lambda\in  S^1(K)$ and consider it as
a $w^*$-continuous functional $\lambda\colon B(K)\to\Cdb\,$.
Then the operator $\lambda\otimes I_{B(H)}$
extends to a (necessarily unique) $w^*$-continuous bounded map
$$
\ell_\lambda\colon B(K)\overline{\otimes} B(H)\longrightarrow B(H).
$$
Likewise, any $\mu\in S^1(H)$ can be considered as
a $w^*$-continuous functional $\mu\colon B(H)\to\Cdb\,$ and
$I_{B(K)}\otimes \mu$
extends to a $w^*$-continuous bounded map
$$
r_\mu\colon B(K)\overline{\otimes} B(H)\longrightarrow B(K).
$$
Then we have the following properties (for which we refer to either
\cite[Lemma 7.2.2]{ER} and its proof, or \cite[1.5.2]{BLM}).

\begin{lemma}\label{1Slice}
Let $z\in B(K)\overline{\otimes} B(H)$. The linear mappings
$$
z'\colon S^1(H)\longrightarrow B(K)
\qquad\hbox{and}\qquad
z''\colon S^1(K)\longrightarrow B(H)
$$
defined by $z'(\mu)=r_\mu(z)$ and
$z''(\lambda)=\ell_\lambda(z)$
are completely bounded.

Further the mappings $z\mapsto z'$ and $z\mapsto z''$ are
$w^*$-continuous completely isometric isomorphisms
from $B(K)\overline{\otimes} B(H)$ onto $CB(S^1(H),B(K))$
and $CB(S^1(K),B(H))$, respectively.
\end{lemma}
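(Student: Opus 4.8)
The plan is to exhibit both $CB(S^1(K),B(H))$ and $B(K)\overline{\otimes}B(H)$ as the dual operator space of one and the same space, namely $S^1(K)\stackrel{\frown}{\otimes}S^1(H)$, and then to verify that the resulting $w^*$-homeomorphic complete isometry between them is exactly the slice map $z\mapsto z''$. The corresponding assertions for $z\mapsto z'$ will follow by interchanging the two factors and using the commutativity of $\stackrel{\frown}{\otimes}$. To put the target in duality form, I would apply (\ref{1Duality4}) with $F=S^1(K)$ and $E=S^1(H)$ and combine it with the trace duality $S^1(H)^*\simeq B(H)$ from (\ref{1S11}); this yields a completely isometric identification
\[
\bigl(S^1(K)\stackrel{\frown}{\otimes}S^1(H)\bigr)^*\,\simeq\,CB(S^1(K),B(H)),
\]
under which a functional $\omega$ corresponds to the map $\lambda\mapsto\bigl[\mu\mapsto\omega(\lambda\otimes\mu)\bigr]$. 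In particular the right hand side is a dual operator space.

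Next I would put the source in the same form. By (\ref{1Normal}) there is a $w^*$-homeomorphic complete isometry $B(K)\overline{\otimes}B(H)\simeq B(K\stackrel{2}{\otimes}H)$, whose canonical predual is $S^1(K\stackrel{2}{\otimes}H)$. The crucial ingredient is the completely isometric identification
\[
S^1(K)\stackrel{\frown}{\otimes}S^1(H)\,\simeq\,S^1(K\stackrel{2}{\otimes}H),
\]
sending $\lambda\otimes\mu$ to the trace class operator $\lambda\otimes\mu$ on $K\stackrel{2}{\otimes}H$; this is the standard description of the operator space projective tensor product of two trace class spaces (see \cite{ER}). Dualizing it and feeding in (\ref{1Normal}) produces a $w^*$-homeomorphic complete isometry $B(K)\overline{\otimes}B(H)\simeq\bigl(S^1(K)\stackrel{\frown}{\otimes}S^1(H)\bigr)^*$.

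It then remains to match the two pictures. Composing the two dual identifications gives a $w^*$-homeomorphic completely isometric isomorphism $\Phi$ from $B(K)\overline{\otimes}B(H)$ onto $CB(S^1(K),B(H))$, and I would check that $\Phi(z)=z''$. Under the predual identification the pairing of $z$ with $\lambda\otimes\mu$ is $(\lambda\otimes\mu)(z)$, computed as a trace on $K\stackrel{2}{\otimes}H$, whereas $z''$ pairs as $\langle z''(\lambda),\mu\rangle=\mu\bigl(\ell_\lambda(z)\bigr)$; on an elementary tensor $z=a\otimes b$ both equal $\lambda(a)\mu(b)$, directly from $\ell_\lambda=\lambda\otimes I_{B(H)}$. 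Since both expressions are $w^*$-continuous in $z$ (the slice maps $\ell_\lambda$ being $w^*$-continuous) and agree on the $w^*$-dense subspace $B(K)\otimes B(H)$, they agree throughout. This simultaneously shows that each $z''$ is completely bounded and that $z\mapsto z''$ is the asserted $w^*$-homeomorphic completely isometric isomorphism onto $CB(S^1(K),B(H))$.

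The step I expect to be the main obstacle is the identification $S^1(K)\stackrel{\frown}{\otimes}S^1(H)\simeq S^1(K\stackrel{2}{\otimes}H)$ used above. This is genuinely an operator space phenomenon — the analogous statement for Banach projective tensor products is false — so the completely isometric matching relies essentially on the operator space structure of the trace class and on the completely isometric trace dualities (\ref{1S11}). By contrast, the complete boundedness of the individual slice maps and the $w^*$-continuity of $z\mapsto z''$ are comparatively routine, once one reduces via the $w^*$-density of $B(K)\otimes B(H)$ in $B(K)\overline{\otimes}B(H)$ to the case of elementary tensors.
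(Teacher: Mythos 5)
Your argument is correct in outline, but note that the paper does not actually prove Lemma \ref{1Slice}: it quotes it from \cite[Lemma 7.2.2]{ER} and \cite[1.5.2]{BLM}, and immediately afterwards records the equivalent formulation (\ref{1S1S1}) --- which is precisely the equivalence your proof runs through. So you have supplied a genuine derivation where the paper defers to the literature, and your scheme (exhibit both $B(K)\overline{\otimes}B(H)$ and $CB(S^1(K),B(H))$ as duals of $S^1(K)\stackrel{\frown}{\otimes}S^1(H)$, then identify the composite isomorphism with $z\mapsto z''$ by checking elementary tensors and using $w^*$-continuity together with the $w^*$-density of $B(K)\otimes B(H)$, with $z\mapsto z'$ obtained by symmetry) is sound. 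The one point you must source carefully is your key identification $S^1(K)\stackrel{\frown}{\otimes}S^1(H)\simeq S^1(K\stackrel{2}{\otimes}H)$. The natural citation in \cite{ER} is the predual formula $(M\overline{\otimes}N)_*\simeq M_*\stackrel{\frown}{\otimes}N_*$ (Theorem 7.2.4 there), but in that book this is established after, and by means of, the slice-map Lemma 7.2.2 --- i.e.\ the very statement you are proving --- so quoted in that form your argument would be circular. The circularity is easily avoided: the identification follows from facts already collected in Proposition \ref{1Recap}, namely
\begin{align*}
S^1(K)\stackrel{\frown}{\otimes}S^1(H)
&\simeq \{\overline{K}\}_r\stackrel{\frown}{\otimes}\{K\}_c
\stackrel{\frown}{\otimes}\{\overline{H}\}_r\stackrel{\frown}{\otimes}\{H\}_c
\simeq \bigl\{\overline{K\stackrel{2}{\otimes}H}\bigr\}_r
\stackrel{\frown}{\otimes}\bigl\{K\stackrel{2}{\otimes}H\bigr\}_c
\simeq S^1\bigl(K\stackrel{2}{\otimes}H\bigr),
\end{align*}
using (\ref{1RC}), the commutativity of $\stackrel{\frown}{\otimes}$ to reorder the row and column factors, and Proposition \ref{1Recap} (b); all of these identifications act in the expected way on elementary tensors, so your matching computation goes through unchanged. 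This is exactly the style of manipulation the paper itself uses later (for instance to obtain (\ref{2Ident1}) in the proof of Proposition \ref{2OM-B}), so with that substitution your proof is both non-circular and very much in the spirit of the paper.
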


According to (\ref{1Duality4}), an equivalent formulation of the above lemma is that
\begin{equation}\label{1S1S1}
\bigl(S^1(K)\stackrel{\frown}{\otimes} S^1(H)\bigr)^*\,\simeq\,
B(K)\overline{\otimes} B(H)
\end{equation}
$w^*$-continuously and completely isometrically.

Recall (\ref{1Rep}).
The space of all $z\in B(K)\overline{\otimes} B(H)$ such that
$z'$ is valued in $F^*$ and $z''$ is valued in $E^*$ is usually called
the normal Fubini tensor product of $F^*$ and $E^*$. This subspace is $w^*$-continuously completely isometric
to $CB(E,F^*)$ (equivalently to $CB(F,E^*)$, by (\ref{1Duality4})).
Indeed we may regard $CB(E,F^*)$ as the subspace of $CB(S^1(H),B(K))$ of
all $w\colon S^1(H)\to B(K)$ such that $w$ is valued in $F^*$ and $w$ vanishes on $E^{*}_{\perp}$.
Then it is not hard to see that $z$ belongs to the normal Fubini tensor product of $F^*$ and $E^*$
if and only if $z'$ belongs to $CB(E,F^*)$.
We refer to \cite[Theorem 7.2.3]{ER} for these facts.

It is easy to check
that the normal Fubini tensor product of $F^*$ and $E^*$
contains $F^*\overline{\otimes} E^*$. This yields a $w^*$-continuous
completely isometric embedding
$$
F^*\overline{\otimes} E^*\,\subset\, CB(E,F^*).
$$
However this inclusion may be strict. The next lemma provides a list of cases
when the inclusion is an equality.  We refer the reader to \cite[Sections 7.2 and 11.2]{ER}
for the proofs.

Whenever $M$ is a von Neumann algebra, we let
$M_*$ denote its (unique) predual. We equip it with its
natural operator space structure, so that $M=(M_*)^*$
completely isometrically (see e.g. \cite[Section 2.5]{P2}
or \cite[Lemma 1.4.6]{BLM}).

\begin{lemma}\label{1Injective1}
\

\begin{itemize}
\item [(a)] For any von Neumann algebras $M,N$, we have
$$
N\overline{\otimes}M\,\simeq\, CB(M_*,N).
$$
\item [(b)] For any injective von Neumann algebra $M$ and for any
operator space $E$, we have
$$
M\overline{\otimes} E^*\,\simeq\, CB(E,M).
$$
\item [(c)] For any Hilbert spaces $H,K$ and for any
operator space $E$, we have
$$
B(H,K)\overline{\otimes} E^*\,\simeq\, CB(E,B(H,K)).
$$
\end{itemize}
\end{lemma}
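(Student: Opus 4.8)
The three identifications all assert that the $w^*$-continuous completely isometric embedding $F^*\overline{\otimes}E^*\subset CB(E,F^*)$ established above is in fact onto; equivalently, that the normal spatial tensor product coincides with the normal Fubini tensor product in each case. Since this embedding is already a $w^*$-continuous complete isometry, surjectivity alone yields the full $w^*$-homeomorphic completely isometric identification. The plan is therefore to fix, in each case, $w^*$-continuous completely isometric embeddings $F^*\subset B(K)$ and $E^*\subset B(H)$ as in (\ref{1Rep}), take an arbitrary $z$ in the normal Fubini tensor product (that is, $z\in B(K)\overline{\otimes}B(H)$ with $z'$ valued in $F^*$ and $z''$ valued in $E^*$), and show that $z$ already lies in the $w^*$-closure $F^*\overline{\otimes}E^*$ of $F^*\otimes E^*$.

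For (a), take $N\subset B(K)$ and $M\subset B(H)$ and recall $F=N_*$, $E=M_*$, so the target is $CB(M_*,N)$ and the Fubini product consists of those $z$ with $\ell_\lambda(z)\in M$ for all $\lambda\in S^1(K)$ and $r_\mu(z)\in N$ for all $\mu\in S^1(H)$. I would show that such a $z$ commutes with $N'\otimes M'$ and then invoke the commutation theorem $(N\overline{\otimes}M)'=N'\overline{\otimes}M'$ to conclude $z\in N\overline{\otimes}M$. The commutation is checked slicewise: for $a\in M'$ one has $\ell_\lambda\bigl(z(1\otimes a)\bigr)=\ell_\lambda(z)\,a$ and $\ell_\lambda\bigl((1\otimes a)z\bigr)=a\,\ell_\lambda(z)$, which agree because $\ell_\lambda(z)\in M$ commutes with $a$; since $z\mapsto z''$ is injective by Lemma \ref{1Slice}, this forces $z(1\otimes a)=(1\otimes a)z$. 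The symmetric argument with $r_\mu$ and $b\in N'$ gives $z(b\otimes 1)=(b\otimes 1)z$, and together these yield commutation with every $b\otimes a$.

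For (b), the content is genuinely deeper and rests on the fact, recalled in the introduction, that an injective von Neumann algebra $M$ is semidiscrete \cite{EL, W}: the identity of $M$ is a point-$w^*$ limit of normal complete contractions $\psi_\alpha\circ\phi_\alpha$ with $\phi_\alpha\colon M\to\Mdb_{n_\alpha}$ and $\psi_\alpha\colon\Mdb_{n_\alpha}\to M$. Given $z\in CB(E,M)$, I would consider the net $(\psi_\alpha\circ\phi_\alpha)\circ z\colon E\to M$, which converges to $z$ in the point-$w^*$ topology and hence, via Lemma \ref{1Slice}, $w^*$ in $B(K)\overline{\otimes}B(H)$. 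Each $\phi_\alpha\circ z\colon E\to\Mdb_{n_\alpha}$ is completely bounded into a matrix algebra and so corresponds to an element of $\Mdb_{n_\alpha}\otimes E^*$, the spatial and Fubini products agreeing trivially when the first factor is finite dimensional; applying $\psi_\alpha\otimes\mathrm{id}_{E^*}$ pushes this into $M\otimes E^*\subset M\overline{\otimes}E^*$. As $M\overline{\otimes}E^*$ is $w^*$-closed, the limit $z$ lies in it.

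Finally, (c) reduces to (b). Writing $p,q$ for the orthogonal projections of $H\oplus K$ onto $H$ and $K$, the space $B(H,K)$ is the corner $q\,B(H\oplus K)\,p$ of the injective von Neumann algebra $B(H\oplus K)$. Applying (b) to $M=B(H\oplus K)$ and then compressing by the complete contraction $x\mapsto qxp$ — which acts $w^*$-continuously and compatibly on both $B(H\oplus K)\overline{\otimes}E^*$ and $CB(E,B(H\oplus K))$ — yields $B(H,K)\overline{\otimes}E^*\simeq CB(E,B(H,K))$. The main obstacle throughout is part (b): everything hinges on the semidiscreteness approximation, and it is precisely the passage from a general completely bounded $z\colon E\to M$ to finite-dimensional matricial approximants, unavailable without injectivity, that supplies the surjectivity missing in the general embedding $F^*\overline{\otimes}E^*\subset CB(E,F^*)$.
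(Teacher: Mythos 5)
Your proof is correct, but note that the paper does not actually prove Lemma \ref{1Injective1}: it simply refers to \cite[Sections 7.2 and 11.2]{ER}. What you have written is, in effect, a reconstruction of the standard arguments behind that citation, and each of your three steps is the classical route. In (a), checking slicewise that a Fubini element commutes with $1\otimes M'$ and with $N'\otimes 1$ (using the module property of the slice maps, the injectivity of $z\mapsto z''$ from Lemma \ref{1Slice}, and separate $w^*$-continuity of multiplication), and then invoking the commutation theorem $(N'\overline{\otimes}M')'=N\overline{\otimes}M$, is exactly how the equality of the Fubini and spatial tensor products of von Neumann algebras is usually proved. In (b), the reduction to semidiscreteness (with injective $\Rightarrow$ semidiscrete supplied by \cite{EL, W}, as the paper itself recalls in the introduction) is the standard and essentially unavoidable ingredient. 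In (c), your corner trick $B(H,K)=qB(H\oplus K)p$ cleanly inherits the statement from (b); it is worth observing that (c) can also be obtained without Connes' deep theorem by running your approximation argument directly with finite-rank compressions $x\mapsto q_\beta x p_\alpha$, since $B(H\oplus K)$ is semidiscrete in a trivial way, which shows that (c) is far more elementary than (b).

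Two points should be made explicit. First, in (b), to pass from point-$w^*$ convergence of $(\psi_\alpha\circ\phi_\alpha)\circ z\to z$ to $w^*$-convergence of the corresponding elements of $B(K)\overline{\otimes}B(H)$ you need the net to be bounded; this is automatic because $\phi_\alpha$ and $\psi_\alpha$ are complete contractions, so the approximants have cb-norm at most $\cbnorm{z}$, and a bounded net that converges against the norm-dense subspace $S^1(K)\otimes S^1(H)$ of the predual converges $w^*$. Second, in (c), the asserted compatibility of the compression with the identification of (b) amounts to the slice identity $r_\mu\bigl((q\otimes 1)\tilde{z}(p\otimes 1)\bigr)=q\,r_\mu(\tilde{z})\,p$ (checked on elementary tensors and extended by $w^*$-continuity), together with the fact that a $w^*$-continuous map sending $B(H\oplus K)\otimes E^*$ into $B(H,K)\otimes E^*$ sends the $w^*$-closure into the $w^*$-closure. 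Neither is a gap in substance; both are routine verifications.
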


Let $K$ be a Hilbert space. We let $\{K\}_c$ (resp. $\{K\}_r$)
denote the column operator space (resp. the row operator space) over $K$.
We recall that through the canonical identification $K^*=\overline{K}$, we have
$$
\{K\}_c^* = \{\overline{K}\}_r
\quad\hbox{and}\quad
\{K\}_r^* = \{\overline{K}\}_c
\qquad\hbox{completely isometrically}.
$$
(See e.g. \cite[Section 3.4]{ER}.)

We let $F\stackrel{h}{\otimes}E$ denote
the Haagerup tensor product of a couple $(F,E)$
of operator spaces. We will use the
fact that this is an associative tensor product.

Let
$\theta\colon F\times E\to\Cdb\,$ be a bounded
bilinear form. Then $\theta$
extends to an element
of $(F\stackrel{h}{\otimes}E)^*$  if and only if there
exist a Hilbert space $\H$ and two completely bounded
maps $\alpha\colon E\to \{\H\}_c\,$ and $\beta\colon F\to
\{\overline{\H}\}_r\,$
such that $\theta(y,x)= \langle \alpha(x),\beta(y)\rangle$
for any $x\in E$ and
any $y\in F$ (see e.g. \cite[Corollary 9.4.2]{ER}).

The Haagerup tensor product is projective. This means that if
$p\colon E\to E_1$ and $q\colon F\to F_1$ are complete quotient maps, then
$q\otimes p$ extends to a (necessarily unique) complete quotient map
$F\stackrel{h}{\otimes}E\to
F_1\stackrel{h}{\otimes}E_1$. Taking the adjoint of the latter, we
obtain a $w^*$-continuous
completely isometric embedding
\begin{equation}\label{1Embed}
(F_1\stackrel{h}{\otimes}E_1)^*
\,\subset\,
(F\stackrel{h}{\otimes}E)^*.
\end{equation}

\begin{lemma}\label{1Slice-H}
Let $E,F,E_1,F_1$ be operator spaces as above and let
$\theta\in (F\stackrel{h}{\otimes}E)^*$. Let
$$
\theta'\colon E\longrightarrow F^*
\qquad\hbox{and}\qquad
\theta''\colon F\longrightarrow E^*
$$
be the bounded linear maps associated to $\theta$.
Then $\theta\in (F_1\stackrel{h}{\otimes}E_1)^*$ (in the sense given by (\ref{1Embed}))
if and only if $\theta'$ is valued in $F_1^*$ and $\theta''$ is valued in $E_1^*$.
\end{lemma}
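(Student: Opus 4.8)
The plan is to reduce the statement to a computation of annihilators. Write $Q=q\otimes p$ for the complete quotient map $F\stackrel{h}{\otimes}E\to F_1\stackrel{h}{\otimes}E_1$, so that the embedding (\ref{1Embed}) is precisely its adjoint $Q^*$. Since $Q$ is surjective, the range of $Q^*$ is the annihilator $(\ker Q)^{\perp}\subset(F\stackrel{h}{\otimes}E)^*$; hence $\theta\in(F_1\stackrel{h}{\otimes}E_1)^*$, in the sense given by (\ref{1Embed}), if and only if $\theta$ vanishes on $\ker Q$.

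The next step is to identify $\ker Q$. Here I would invoke projectivity of the Haagerup tensor product in its precise form: $\ker(q\otimes p)$ is the Haagerup-norm closure of the algebraic subspace $N:=(\ker q)\otimes E+F\otimes(\ker p)$ (see \cite[Section 9.2]{ER}). The inclusion $\overline{N}\subseteq\ker Q$ is immediate, since $Q(y\otimes x)=q(y)\otimes p(x)$ vanishes as soon as $y\in\ker q$ or $x\in\ker p$; the reverse inclusion is the substance of projectivity. Because $\theta$ is continuous, it vanishes on $\ker Q=\overline{N}$ if and only if it vanishes on the dense subspace $N$, equivalently if and only if it vanishes separately on $(\ker q)\otimes E$ and on $F\otimes(\ker p)$.

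Finally I would translate these two vanishing conditions through the defining formulas $\langle\theta'(x),y\rangle=\theta(y\otimes x)=\langle\theta''(y),x\rangle$ of (\ref{1Duality3}). Recall that $q^*\colon F_1^*\to F^*$ and $p^*\colon E_1^*\to E^*$ are $w^*$-continuous complete isometries with ranges $(\ker q)^{\perp}$ and $(\ker p)^{\perp}$, under which one views $F_1^*\subset F^*$ and $E_1^*\subset E^*$. Then $\theta'$ is valued in $F_1^*$ exactly when $\theta'(x)\in(\ker q)^{\perp}$ for every $x\in E$, i.e. $\theta(y\otimes x)=0$ for all $y\in\ker q$ and all $x\in E$; this is precisely the vanishing of $\theta$ on $(\ker q)\otimes E$. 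Symmetrically, $\theta''$ is valued in $E_1^*$ if and only if $\theta$ vanishes on $F\otimes(\ker p)$. Combining this with the two previous steps yields the asserted equivalence.

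The main obstacle is the kernel identification $\ker(q\otimes p)=\overline{N}$: only the easy inclusion follows formally, whereas the reverse inclusion is exactly the full projectivity property of $\stackrel{h}{\otimes}$ recalled before the statement, so I would lean on the standard reference rather than reprove it. Everything else is routine bookkeeping with adjoints of quotient maps and the duality pairing in (\ref{1Duality3}).
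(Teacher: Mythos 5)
Your proof strategy is sound and genuinely different from the paper's, but as written it hinges on one claim that is both unproved and mislabeled. Steps 1 and 3 are correct, routine duality --- ${\rm ran}(Q^*)=({\rm Ker}\,Q)^{\perp}$ by the closed range theorem, and ``$\theta'$ valued in $F_1^*$, $\theta''$ valued in $E_1^*$'' is the same as $\theta$ annihilating $N=({\rm Ker}\,q)\otimes E+F\otimes({\rm Ker}\,p)$ --- and they parallel what the paper does for its easy direction. Everything therefore rests on the kernel identification ${\rm Ker}(q\otimes p)=\overline{N}$. Contrary to what you say, this is \emph{not} ``exactly the projectivity recalled before the statement'': that projectivity asserts only that $q\otimes p$ is a complete quotient map, i.e.\ that $(F\stackrel{h}{\otimes}E)/{\rm Ker}(Q)\to F_1\stackrel{h}{\otimes}E_1$ is a completely isometric isomorphism, and this carries no information about which closed subspace ${\rm Ker}(Q)$ actually is. Worse, by your own annihilator computation plus the bipolar theorem, the identity ${\rm Ker}(Q)=\overline{N}$ is \emph{equivalent} to Lemma \ref{1Slice-H}; so the reference you lean on must contain precisely the kernel form, otherwise the argument is circular --- you would merely have reduced the lemma to an equivalent restatement of itself.

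The kernel identification is in fact a true and known strengthening of projectivity (it is the form $(F/{\rm Ker}\,q)\stackrel{h}{\otimes}(E/{\rm Ker}\,p)\simeq(F\stackrel{h}{\otimes}E)/\overline{N}$ found in standard treatments of the Haagerup tensor product, but you should verify that the statement you cite really includes the kernel, since many sources, like this paper, record only the bare quotient-map version). It also admits a short self-contained proof that would close the gap: the assignment $q(y)\otimes p(x)\mapsto y\otimes x+\overline{N}$ is well defined on the algebraic tensor product $F_1\otimes E_1$ and extends to a complete contraction $\Phi\colon F_1\stackrel{h}{\otimes}E_1\to(F\stackrel{h}{\otimes}E)/\overline{N}$, because a matrix over $F_1\otimes E_1$ of Haagerup norm $<1$ factors as a product of matrices over $F_1$ and $E_1$ of norms $<1$, and these lift through the complete quotient maps $q$, $p$ to matrices of norm $<1$; since $\Phi$ inverts the induced map $(F\stackrel{h}{\otimes}E)/\overline{N}\to F_1\stackrel{h}{\otimes}E_1$ on a dense subspace, one gets ${\rm Ker}(Q)\subset\overline{N}$. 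With this paragraph (or a pinpoint citation of the kernel form) supplied, your proof is complete and arguably cleaner than the paper's, and it makes transparent that the lemma is just the dual reformulation of the kernel identification. The paper avoids the issue entirely by running the Christensen--Sinclair representation $\theta(y,x)=\langle\alpha(x),\beta(y)\rangle$ of \cite[Corollary 9.4.2]{ER}, arranging $\alpha$, $\beta$ to have dense ranges, showing they vanish on ${\rm Ker}(p)$ and ${\rm Ker}(q)$ respectively, and factoring through the quotients --- an input that is independent of, not equivalent to, the statement being proved.
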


\begin{proof}
If $\theta\in (F_1\stackrel{h}{\otimes}E_1)^*$, then
$\theta(y,x)=0$ if either $x\in{\rm Ker}(p)$ or
$y\in{\rm Ker}(q)$. Hence $\langle \theta''(y),x\rangle=0$
for any $(y,x)\in F\times {\rm Ker}(p)$ and
$\langle \theta'(x),y\rangle=0$ for any $(y,x)\in {\rm Ker}(q)\times E$.
Hence $\theta''$ is valued in $E_1^*={\rm Ker}(p)^\perp$
and $\theta'$ is valued in $F_1^*={\rm Ker}(q)^\perp$.

Assume conversely that $\theta'$ is valued in $F_1^*$ and that
$\theta''$ is valued in $E_1^*$.
Let $\alpha\colon E\to \{\H\}_c\,$ and  $\beta\colon F\to
\{\overline{\H}\}_r\,$
be completely bounded maps, for some Hilbert space $\H$,
such that $\theta(y,x)= \langle \alpha(x),\beta(y)\rangle$
for any $x\in E$ and
any $y\in F$. Changing $\H$ into the closure of the range of $\alpha$, we may assume that
$\alpha$ has dense range. Next changing $\H$ into the closure of the
(conjugate of) the range of $\beta$, we may
actually assume that both $\alpha$ and $\beta$ have dense range.

The assumption on $\theta'$
means that $\langle \alpha(x),\beta(y)\rangle=0$
for any $x\in E$ and any $y\in {\rm Ker}(q)$. Since $\alpha$ has dense range
this means that $\beta$ vanishes on ${\rm Ker}(q)$. Likewise
the assumption on $\theta''$
means that $\alpha$ vanishes on ${\rm Ker}(p)$.
We may therefore consider $\alpha_1\colon E_1\to \{\H\}_c$ and
$\beta_1\colon F_1\to \{\overline{\H}\}_r$
induced by $\alpha$ and $\beta$, that is,
$\alpha=\alpha_1\circ p$ and $\beta=\beta_1\circ q$.
Further $\alpha_1$ and $\beta_1$ are completely
bounded, hence the bilinear mapping
$(y_1,x_1)\mapsto \langle \alpha_1(x_1),\beta_1(y_1)\rangle$
is an element of $(F_1\stackrel{h}{\otimes}E_1)^*$. By construction
it identifies with $\theta$ in the embedding (\ref{1Embed}),
hence $\theta$ belongs to $(F_1\stackrel{h}{\otimes}E_1)^*$.
\end{proof}

We will need the so-called weak$^*$ Haagerup tensor product
of two dual operator spaces \cite{BS}. It can be defined by
\begin{equation}\label{1w*h}
F^*\stackrel{w^*h}{\otimes}E^* \,=\, (F\stackrel{h}{\otimes}E)^*.
\end{equation}
The reason why this dual space can be considered as a tensor product over the
couple $(F^*,E^*)$ is discussed in 
\cite[1.6.9]{BLM}.

We now recall a few tensor product identities involving the
operator space projective tensor product and the Haagerup
tensor product.

\begin{proposition}\label{1Recap} Let $E$ be an operator space
and let $H,K$ be two Hilbert spaces.
\begin{itemize}
\item [(a)] We have completely isometric identifications
$$
\{K\}_r\stackrel{\frown}{\otimes} E
\,\simeq\,
\{K\}_r\stackrel{h}{\otimes} E
\qquad\hbox{and}\qquad
E\stackrel{\frown}{\otimes} \{H\}_c
\,\simeq\,
E\stackrel{h}{\otimes} \{H\}_c.
$$

\item [(b)] We have completely isometric identifications
$$
\{K\}_r\stackrel{\frown}{\otimes} \{H\}_r
\,\simeq\,
\{K\stackrel{2}{\otimes} H\}_r
\qquad\hbox{and}\qquad
\{K\}_c\stackrel{\frown}{\otimes} \{H\}_c
\,\simeq\,
\{K\stackrel{2}{\otimes} H\}_c.
$$

\item [(c)] The embedding $\overline{K}\otimes H\subset S^1(K,H)$
extends to completely isometric identifications
\begin{equation}\label{1RC}
S^1(K,H)
\,\simeq\,
\{\overline{K}\}_r\stackrel{\frown}{\otimes}\{H\}_c
\end{equation}
and
\begin{equation}\label{1REC}
S^1(K,H)\stackrel{\frown}{\otimes} E
\,\simeq\,
\{\overline{K}\}_r\stackrel{\frown}{\otimes} E
\stackrel{\frown}{\otimes}\{H\}_c.
\end{equation}

\item [(d)] To any $u\colon E\to B(H,K)$, associate
$\theta_u\colon \overline{K}\otimes E\otimes H\to\Cdb\,$ by
letting $\theta_u(\overline{\xi}\otimes x\otimes \eta )
=\langle u(x)\eta,\xi\rangle$, for any $x\in E,\eta\in H,\xi\in K$.
Then $u\mapsto\theta_u$ extends to a $w^*$-continuous completely
isometric identification
$$
\bigl(\{\overline{K}\}_r\stackrel{\frown}{\otimes} E
\stackrel{\frown}{\otimes}\{H\}_c\bigr)^*
\,\simeq\,
CB(E,B(H,K)).
$$
\end{itemize}
\end{proposition}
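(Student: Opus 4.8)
The plan is to treat parts (a) and (b) as standard operator space facts and to reduce the substantive identities (\ref{1RC})--(\ref{1REC}) and part (d) to them, combined with the duality principles (\ref{1Duality4}) and (\ref{1S11}) already recorded. For the coincidences in (a), namely $\{K\}_r \stackrel{\frown}{\otimes} E \simeq \{K\}_r \stackrel{h}{\otimes} E$ and $E \stackrel{\frown}{\otimes} \{H\}_c \simeq E \stackrel{h}{\otimes} \{H\}_c$, I would invoke the well-known fact that the projective and Haagerup norms agree when the left factor is a row space or the right factor is a column space (see \cite{ER, BLM}). Should a self-contained argument be preferred, one dualises: by (\ref{1Duality4}) the dual of $\{K\}_r \stackrel{\frown}{\otimes} E$ is $CB(E, \{\overline{K}\}_c)$, and the Haagerup functional characterisation recalled just before Lemma \ref{1Slice-H} identifies this with the dual $\{\overline{K}\}_c \stackrel{w^*h}{\otimes} E^*$ of $\{K\}_r \stackrel{h}{\otimes} E$.

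Part (b) then follows by first applying (a) to replace $\stackrel{\frown}{\otimes}$ by $\stackrel{h}{\otimes}$ (the first identity with $E = \{H\}_r$, the second with $E = \{K\}_c$) and then invoking the standard Haagerup fusion rules $\{K\}_r \stackrel{h}{\otimes} \{H\}_r \simeq \{K \stackrel{2}{\otimes} H\}_r$ and $\{K\}_c \stackrel{h}{\otimes} \{H\}_c \simeq \{K \stackrel{2}{\otimes} H\}_c$. For (\ref{1RC}) I would again use (a) to pass to the Haagerup product and then the classical identification $\{\overline{K}\}_r \stackrel{h}{\otimes} \{H\}_c \simeq S^1(K,H)$; as a consistency check, dualising the right-hand side via $\{\overline{K}\}_r^* = \{K\}_c$ and $\{H\}_c^* = \{\overline{H}\}_r$ yields $\{K\}_c \stackrel{w^*h}{\otimes} \{\overline{H}\}_r \simeq B(H,K)$, in agreement with $S^1(K,H)^* \simeq B(H,K)$ from (\ref{1S11}). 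Identity (\ref{1REC}) is then purely formal: using (\ref{1RC}) together with the commutativity and associativity of $\stackrel{\frown}{\otimes}$, one has $S^1(K,H) \stackrel{\frown}{\otimes} E \simeq \{\overline{K}\}_r \stackrel{\frown}{\otimes} \{H\}_c \stackrel{\frown}{\otimes} E \simeq \{\overline{K}\}_r \stackrel{\frown}{\otimes} E \stackrel{\frown}{\otimes} \{H\}_c$.

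Part (d) is obtained by taking adjoints in (\ref{1REC}) and chaining the identifications $\bigl(\{\overline{K}\}_r \stackrel{\frown}{\otimes} E \stackrel{\frown}{\otimes} \{H\}_c\bigr)^* \simeq \bigl(S^1(K,H) \stackrel{\frown}{\otimes} E\bigr)^* \simeq CB(E, S^1(K,H)^*) \simeq CB(E, B(H,K))$, where the middle step is (\ref{1Duality4}) and the last is (\ref{1S11}). Each arrow is a complete isometry, and the first is moreover a $w^*$-homeomorphism, being the adjoint of the complete isometry (\ref{1REC}). It then remains to verify that the composite is the announced map $u \mapsto \theta_u$: under (\ref{1REC}) the elementary tensor $\overline{\xi} \otimes x \otimes \eta$ corresponds to $(\overline{\xi} \otimes \eta) \otimes x \in S^1(K,H) \stackrel{\frown}{\otimes} E$, and pairing with $u$ through trace duality gives $tr\bigl(u(x)(\overline{\xi} \otimes \eta)\bigr) = \langle u(x)\eta, \xi\rangle = \theta_u(\overline{\xi} \otimes x \otimes \eta)$, as required.

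The genuinely routine steps are the tensor-norm identities of (a)--(c). The point that requires real care, and which I regard as the main obstacle, is the bookkeeping in the last paragraph: confirming that every identification in the chain is completely isometric (not merely isometric) and $w^*$-continuous, and correctly tracking the complex conjugates, the row-versus-column assignments, and the trace pairing, so that the abstract composition of dualities reproduces precisely the explicit formula defining $\theta_u$.
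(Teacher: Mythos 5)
Your proposal is correct and takes essentially the same route as the paper: parts (a) and (b) are the standard row/column facts (the paper cites \cite[Propositions 9.3.2 and 9.3.5]{ER}, while you derive (b) from (a) plus the Haagerup fusion rules, a trivial variation), (\ref{1RC}) and (\ref{1REC}) follow from these together with the trace-class identification and commutativity of $\stackrel{\frown}{\otimes}$, and (d) is obtained by dualizing (\ref{1REC}) via (\ref{1Duality4}) and (\ref{1S11}). Your closing verification that the composite identification really is $u\mapsto\theta_u$, through $tr\bigl(u(x)(\overline{\xi}\otimes\eta)\bigr)=\langle u(x)\eta,\xi\rangle$, is a correct detail that the paper leaves implicit.
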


\begin{proof}
We refer to \cite[Proposition 9.3.2]{ER} for (a)
and to \cite[Proposition 9.3.5]{ER} for (b).
Formula (\ref{1RC}) follows from \cite[Proposition 9.3.4]{ER}
and (a), and formula (\ref{1REC}) follows by the comutativity
of the operator space projective tensor product.
Finally (d) is a consequence of (\ref{1REC}), (\ref{1S11}) and (\ref{1Duality4}).
\end{proof}

\begin{remark}\label{1Equal}
Comparing (\ref{1RC})
with (\ref{1Proj}), we note
that at the Banach space level,
the operator space projective tensor product of a row and a column
Hilbert space coincides with their Banach space projective tensor product.
\end{remark}

\begin{remark}\label{1RankOne} For any $\eta\in H$ and $\xi\in K$, let
$T_{\eta,\xi}\in B(H,K)$ be the rank one operator defined by
$$
T_{\eta,\xi}(\zeta)=\langle \zeta,\eta\rangle\, \xi,\qquad\zeta\in H.
$$
When we consider this operator as an element of $S^\infty(H,K)$
or $B(H,K)$, it is convenient to identify it with $\xi\otimes\overline{\eta}
\in K\otimes\overline{H}$, and hence to regard $K\otimes\overline{H}$
as a subspace of $S^\infty(H,K)$. This convention is different from the
one used so far when we had to represent rank one (more generally, finite
rank) operators as elements of the trace class or of the Hilbert-Schmidt class.

The rationale for this is that the
trace duality providing (\ref{1S11}) extends the natural duality
between $K\otimes\overline{H}$ and $\overline{K}\otimes H$.
Then the embedding $K\otimes\overline{H}\subset
S^\infty(H,K)$ extends to a completely isometric identification
\begin{equation}\label{1Compact}
S^\infty (H,K)\,\simeq\, \{K\}_c\stackrel{h}{\otimes} \{\overline{H}\}_r.
\end{equation}
(See e.g. \cite[Proposition 9.3.4]{ER}.)
\end{remark}

If $A$ is any $C^*$-algebra, the so-called opposite
$C^*$-algebra $A^{op}$ is the involutive Banach space
$A$ equipped with its reversed
multiplication $(a,b)\mapsto ba$. Note that as an operator
space, $A^{op}$ is not (in general) the same as $A$,
that is, the identity mapping $A\to A^{op}$ is
not a complete isometry. See e.g. \cite[Theorem 2.2]{Roy}
for more about this.

In the case when $A=B(H)$, we have the following
well-known description (see e.g. \cite[Sections 2.9 and 2.10]{P2}).

\begin{lemma}\label{1Opp}
Let $H$ be a Hilbert space. For any $S\in B(H)$, define
$$
\widehat{S}(\overline{h}) = \overline{S^*(h)},\qquad
h\in H.
$$
Then $S\mapsto \widehat{S}$ is a $*$-isomorphism from $B(H)^{op}$
onto $B(\overline{H})$.
\end{lemma}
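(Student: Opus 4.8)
The plan is to realize the map $S\mapsto\widehat S$ through the canonical antilinear conjugation and then reduce every assertion to a direct computation with the twisted structure of $\overline H$. Let $j\colon H\to\overline H$ be the conjugation $j(h)=\overline h$; it is an antilinear isometric bijection, with inverse $j^{-1}(\overline h)=h$ also antilinear. The definition unwinds to $\widehat S=j\circ S^*\circ j^{-1}$, since $j(S^*(j^{-1}(\overline h)))=j(S^*h)=\overline{S^*(h)}$. Being the composite of two antilinear maps around the linear map $S^*$, the operator $\widehat S$ is complex-linear on $\overline H$ (one checks that $\widehat S(\lambda\overline h)=\lambda\widehat S(\overline h)$, the two conjugations on the scalar cancelling), and it is bounded with $\norm{\widehat S}=\norm{S^*}=\norm{S}$ because $j$ is isometric.

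Next I would verify the algebraic properties by short inner-product computations. For multiplicativity, note that in $B(H)^{op}$ the product of $S$ and $T$ is $TS$, so the homomorphism property to be checked is $\widehat{TS}=\widehat S\,\widehat T$, which follows from
$$
\widehat{TS}(\overline h)=\overline{(TS)^*(h)}=\overline{S^*T^*(h)}=\widehat S\bigl(\overline{T^*(h)}\bigr)=\widehat S\bigl(\widehat T(\overline h)\bigr).
$$
For the involution, using $\langle\overline h,\overline k\rangle_{\overline H}=\langle k,h\rangle_H$ one gets $\langle\widehat S(\overline h),\overline k\rangle_{\overline H}=\langle k,S^*h\rangle_H=\langle Sk,h\rangle_H$, and the same quantity equals $\langle\overline h,\widehat{S^*}(\overline k)\rangle_{\overline H}$; hence $(\widehat S)^*=\widehat{S^*}$.

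Finally, injectivity is immediate from $\norm{\widehat S}=\norm{S}$, and surjectivity follows by exhibiting the inverse: given $R\in B(\overline H)$, the map $j^{-1}Rj$ is a bounded \emph{linear} operator on $H$, and setting $S=(j^{-1}Rj)^*$ yields $\widehat S=jS^*j^{-1}=R$. Thus $S\mapsto\widehat S$ is a linear isometric bijection that reverses products and preserves adjoints, i.e. a $*$-isomorphism from $B(H)^{op}$ onto $B(\overline H)$. I expect the only point requiring genuine care to be the consistent bookkeeping of the conjugate-linear structure on $\overline H$: confirming that the two built-in conjugations (in its scalar action and in its inner product) combine so that $\widehat S$ comes out complex-linear and that the product genuinely \emph{reverses}, which is precisely what forces the source to be $B(H)^{op}$ rather than $B(H)$.
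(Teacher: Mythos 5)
Your proof is correct and complete. The paper offers no proof of this lemma at all---it is stated as well known with a pointer to \cite[Sections 2.9 and 2.10]{P2}---and your argument (realizing $\widehat{S}=j\circ S^*\circ j^{-1}$ via the canonical conjugation and then checking linearity, isometry, product reversal $\widehat{TS}=\widehat{S}\,\widehat{T}$, preservation of adjoints, and surjectivity) is precisely the standard verification that the cited reference would supply.
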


In the sequel we will use the operator space $M_{*}^{op}$ for any von Neumann algebra $M$.
This is both the predual operator space of $M^{op}$ and the opposite operator space of
$M_*$, in the sense of \cite[Section 2.10]{P2}.

\section{Operator multipliers into the trace class}\label{3OM}
Let $H_1,H_2,H_3$ be three Hilbert spaces.
Using (\ref{1HS}), we let
$$
\Theta\colon H_1\stackrel{2}{\otimes}\overline{H_2}\stackrel{2}{\otimes} H_3\longrightarrow
S^2\bigl(S^2(H_1,H_2),H_3\bigr)
$$
be the unitary operator obtained by first identifying $H_1\stackrel{2}{\otimes}\overline{H_2}$
with $\overline{S^2(H_1,H_2)}$, and then identifying $\overline{S^2(H_1,H_2)}\stackrel{2}{\otimes}
H_3$ with $S^2\bigl(S^2(H_1,H_2),H_3\bigr)$.

For any $\varphi\in B\bigl(H_1\stackrel{2}{\otimes}\overline{H_2}
\stackrel{2}{\otimes} H_3\bigr)$, one may define
a bounded bilinear map
$$
\tau_\varphi\colon  S^2(H_2,H_3)\times S^2(H_1,H_2)\longrightarrow B(H_1,H_3)
$$
by
$$
\bigl[\tau_\varphi(y,x)\bigr](h) = \Theta\bigl[\varphi(h\otimes y)\bigr](x),
\qquad x\in S^2(H_1,H_2), \ y\in S^2(H_2,H_3),\ h\in H_1.
$$
On the right hand side of the above
equality, $y$ is regarded as an element of
$\overline{H_2}\stackrel{2}{\otimes}H_3$, and hence $h\otimes y$
is an element of $H_1\stackrel{2}{\otimes}\overline{H_2}
\stackrel{2}{\otimes} H_3$.
It is clear that
$$
\bignorm{\bigl[\tau_\varphi(y,x)\bigr](h)}
\leq \norm{\varphi}\norm{x}_2\norm{y}_2\norm{h}.
$$
Consequently, the above construction defines a contraction
\begin{equation}\label{2Sigma1}
\tau\colon
B\bigl(H_1\stackrel{2}{\otimes}\overline{H_2}\stackrel{2}{\otimes} H_3\bigr)
\longrightarrow
B_2\bigl(S^2(H_2,H_3)\times S^2(H_1,H_2), B(H_1,H_3)\bigr).
\end{equation}
The bilinear maps $\tau_\varphi$ were introduced in \cite{JTT} 
(however the latter paper focuses on the case when 
$\bignorm{\bigl[\tau_\varphi(y,x)\bigr](h)}
\leq D\norm{x}\norm{y}\norm{h}$ for some constant $D>0$).
We call $\tau_\varphi$ an operator multiplier and we say that
$\varphi$ is the symbol of $\tau_\varphi$. We refer to \cite{JTT} for
$m$-linear versions of such operators for arbitrary $m\geq 2$.

We note that by (\ref{1Normal}) and Lemma \ref{1Opp},
we have a von Neumann algebra identification
\begin{equation}\label{2VN}
B\bigl(H_1\stackrel{2}{\otimes}\overline{H_2}\stackrel{2}{\otimes} H_3\bigr)
\,\simeq\,
B(H_1)\overline{\otimes} B(H_2)^{op}\overline{\otimes} B(H_3).
\end{equation}
In the sequel we will make no difference between these two von Neumann algebras.
In particular, we will consider symbols $\varphi$ of operator multipliers as elements
of $B(H_1)\overline{\otimes} B(H_2)^{op}\overline{\otimes} B(H_3)$.

One can check (see \cite{JTT}) that for any $R\in B(H_1)$, $S\in B(H_2)$ and
$T\in B(H_3)$, we have
\begin{equation}\label{2Sigma2}
\tau_{R\otimes S\otimes T}(y,x) = TySxR,
\qquad x\in S^2(H_1,H_2), \ y\in S^2(H_2,H_3).
\end{equation}
Note that in this identity,
$S$ is regarded as an element of $B(H_2)^{op}$
at the left-hand side 
and as an element of $B(H_2)$ at the right-hand side.

We now define the operator space
\begin{equation}\label{2OS-Gamma}
\Gamma(H_1,H_2,H_3)\, =\, \{S^2(H_2,H_3)\}_c
\stackrel{\frown}{\otimes} \{S^2(H_1,H_2)\}_r.
\end{equation}
According to Remark \ref{1Equal}, $\Gamma(H_1,H_2,H_3)$
coincides, at the Banach
space level, with the projective tensor
product of $S^2(H_2,H_3)$ and $S^2(H_1,H_2)$. Hence
\begin{equation}\label{2Equal}
B_2\bigl(S^2(H_2,H_3)\times S^2(H_1,H_2), B(H_1,H_3)\bigr)
\,\simeq\, B\bigl(\Gamma(H_1,H_2,H_3), B(H_1,H_3)\bigr)
\end{equation}
by (\ref{1Duality1}). In the sequel for any
$u\colon S^2(H_2,H_3)\times S^2(H_1,H_2)\to B(H_1,H_3)$,
we let
$$
\widetilde{u}\colon
\Gamma(H_1,H_2,H_3)\longrightarrow B(H_1,H_3)
$$
denote its associated linear map.

The next proposition shows that under the identification (\ref{2Equal}),
the range of $\tau$
coincides with the space of completely bounded maps from
$\Gamma(H_1,H_2,H_3)$ into $B(H_1,H_3)$.

\begin{proposition}\label{2OM-B}
Let $u\colon S^2(H_2,H_3)\times S^2(H_1,H_2)\to B(H_1,H_3)$
be a bounded bilinear map. Then $\widetilde{u}\colon
\Gamma(H_1,H_2,H_3)\to B(H_1,H_3)$ is completely bounded if and only if
there exists $\varphi$ in
$B(H_1)\overline{\otimes} B(H_2)^{op}\overline{\otimes} B(H_3)$
such that $u=\tau_\varphi$. Further
$\tau$ provides a $w^*$-continuous
completely isometric identification
\begin{equation}\label{2OM-B1}
B(H_1)\overline{\otimes} B(H_2)^{op}\overline{\otimes} B(H_3)
\,\simeq\, CB\bigl(\Gamma(H_1,H_2,H_3), B(H_1,H_3)\bigr).
\end{equation}
\end{proposition}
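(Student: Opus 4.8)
The plan is to compute the dual operator space $CB\bigl(\Gamma(H_1,H_2,H_3),B(H_1,H_3)\bigr)$ explicitly and then to recognise the resulting identification as the map $\tau$. First I would apply Proposition \ref{1Recap}(d) with $E=\Gamma(H_1,H_2,H_3)$, $K=H_3$ and $H=H_1$, which exhibits $CB\bigl(\Gamma(H_1,H_2,H_3),B(H_1,H_3)\bigr)$ as the dual, $w^*$-continuously and completely isometrically, of the operator space
\[
P \,:=\, \{\overline{H_3}\}_r \stackrel{\frown}{\otimes} \Gamma(H_1,H_2,H_3) \stackrel{\frown}{\otimes} \{H_1\}_c .
\]
Everything then reduces to identifying $P$.

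Next I would unfold $\Gamma(H_1,H_2,H_3)$. By (\ref{1HS}) we have $S^2(H_2,H_3)=\overline{H_2}\stackrel{2}{\otimes}H_3$ and $S^2(H_1,H_2)=\overline{H_1}\stackrel{2}{\otimes}H_2$ as Hilbert spaces, so Proposition \ref{1Recap}(b) gives completely isometric identifications $\{S^2(H_2,H_3)\}_c\simeq\{\overline{H_2}\}_c\stackrel{\frown}{\otimes}\{H_3\}_c$ and $\{S^2(H_1,H_2)\}_r\simeq\{\overline{H_1}\}_r\stackrel{\frown}{\otimes}\{H_2\}_r$. Substituting into (\ref{2OS-Gamma}) and into $P$, and using that $\stackrel{\frown}{\otimes}$ is commutative and associative, $P$ becomes a reordered tensor product of the six row/column spaces $\{\overline{H_3}\}_r,\{\overline{H_1}\}_r,\{H_2\}_r$ and $\{\overline{H_2}\}_c,\{H_3\}_c,\{H_1\}_c$. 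Collecting the three row factors and the three column factors separately and applying Proposition \ref{1Recap}(b) once more, these combine into $\{\overline{\K}\}_r$ and $\{\K\}_c$ respectively, where $\K:=H_1\stackrel{2}{\otimes}\overline{H_2}\stackrel{2}{\otimes}H_3$ (here one uses $\overline{\overline{H_2}}=H_2$ and commutativity of the Hilbertian tensor product). Hence Proposition \ref{1Recap}(c) yields $P\simeq\{\overline{\K}\}_r\stackrel{\frown}{\otimes}\{\K\}_c\simeq S^1(\K)$ completely isometrically. Dualising and using (\ref{1S11}) together with (\ref{2VN}), I obtain a chain
\[
CB\bigl(\Gamma(H_1,H_2,H_3),B(H_1,H_3)\bigr)\,\simeq\,S^1(\K)^*\,\simeq\,B(\K)\,\simeq\,B(H_1)\overline{\otimes}B(H_2)^{op}\overline{\otimes}B(H_3)
\]
of $w^*$-continuous completely isometric identifications; denote by $\Phi$ the resulting map from the right-hand algebra onto $CB\bigl(\Gamma(H_1,H_2,H_3),B(H_1,H_3)\bigr)$.

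It remains to check that $\Phi$ coincides with $\tau$. Since both maps are linear and the algebraic tensor product $B(H_1)\otimes B(H_2)^{op}\otimes B(H_3)$ is $w^*$-dense in the von Neumann algebra tensor product, it suffices to verify the equality on elementary tensors $\varphi=R\otimes S\otimes T$ and then to upgrade it by a density argument. For the pointwise identity I would trace an element $\overline{\xi}\otimes(y\otimes x)\otimes\eta$, with $x\in S^2(H_1,H_2)$, $y\in S^2(H_2,H_3)$, $\eta\in H_1$ and $\xi\in H_3$, through the identifications above into $S^1(\K)$, and pair it with $\varphi$ viewed in $B(\K)$ via (\ref{2VN}); under (\ref{2VN}) the middle factor $S\in B(H_2)^{op}$ acts on $\overline{H_2}$ as the operator $\widehat{S}$ of Lemma \ref{1Opp}, and unwinding the pairing according to the matrix-coefficient formula of Proposition \ref{1Recap}(d) should reproduce $\langle TySxR\,\eta,\xi\rangle$, that is $\Phi(R\otimes S\otimes T)=\tau_{R\otimes S\otimes T}$ by (\ref{2Sigma2}). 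Finally, to pass from elementary tensors to arbitrary symbols I would note that for fixed $x,y,\eta,\xi$ the map $\varphi\mapsto\langle\tau_\varphi(y,x)\eta,\xi\rangle$ is a vector functional on $B(\K)$, hence $w^*$-continuous, as is $\varphi\mapsto\langle\Phi(\varphi)(y,x)\eta,\xi\rangle$ by $w^*$-continuity of $\Phi$; these two $w^*$-continuous functionals agree on the $w^*$-dense algebraic tensor product and therefore everywhere, giving $\tau_\varphi=\Phi(\varphi)$ for all $\varphi$. This simultaneously shows that $\widetilde{u}$ is completely bounded precisely when $u=\tau_\varphi$ for some $\varphi$, and that $\tau$ realises the identification (\ref{2OM-B1}).

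I expect the main obstacle to be the computation on elementary tensors: keeping track of the complex conjugate $\overline{H_2}$ and of the passage $S\mapsto\widehat{S}$ from $B(H_2)^{op}$ to $B(\overline{H_2})$ through Lemma \ref{1Opp}, together with the several applications of the rank-one conventions of Remark \ref{1RankOne} and of the unitary $\Theta$, so that the abstract pairing on $S^1(\K)$ matches the concrete formula (\ref{2Sigma2}). The remaining duality and regrouping steps are routine consequences of Proposition \ref{1Recap}, of (\ref{1S11}), and of Lemma \ref{1Opp}.
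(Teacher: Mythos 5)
Your proposal is correct and follows essentially the same route as the paper's own proof: the same unfolding of $\Gamma(H_1,H_2,H_3)$ into row and column factors via (\ref{1HS}) and Proposition \ref{1Recap}(b), the same regrouping into $S^1(\H)$ with $\H=H_1\stackrel{2}{\otimes}\overline{H_2}\stackrel{2}{\otimes}H_3$ (the paper's (\ref{2Ident1}), your $P\simeq S^1(\K)$), the same dualisation via Proposition \ref{1Recap}(d), (\ref{1S11}) and (\ref{2VN}), and the same verification scheme (reduction to elementary tensors checked on rank-one elements, then $w^*$-density). Your formulation of the density step via two $w^*$-continuous vector functionals agreeing on the algebraic tensor product is a slightly tidier packaging of the paper's net argument, but the content is identical.
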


\begin{proof}
For convenience we set
$$
\H=H_1\stackrel{2}{\otimes}\overline{H_2}
\stackrel{2}{\otimes} H_3.
$$
By (\ref{1HS}) and Proposition \ref{1Recap} (b),
we have
$$
\{S^2(H_1,H_2)\}_r\,\simeq\,\{\overline{H_1}\}_r
\stackrel{\frown}{\otimes}\{H_2\}_r
\qquad\hbox{and}\qquad
\{S^2(H_2,H_3)\}_c\,\simeq\,\{\overline{H_2}\}_c
\stackrel{\frown}{\otimes}\{H_3\}_c
$$
completely isometrically.
Hence applying (\ref{2OS-Gamma}), we have
\begin{equation}\label{2Gamma}
\Gamma(H_1,H_2,H_3) \,\simeq\,
\{\overline{H_2}\}_c
\stackrel{\frown}{\otimes}\{H_3\}_c
\stackrel{\frown}{\otimes}
\{\overline{H_1}\}_r
\stackrel{\frown}{\otimes}\{H_2\}_r
\end{equation}
completely isometrically.
Using the commutativity of the operator space projective
tensor product, we deduce a
completely isometric identification
$$
\{\overline{H_3}\}_r\stackrel{\frown}{\otimes}
\Gamma(H_1,H_2,H_3)\stackrel{\frown}{\otimes}
\{H_1\}_c
\,\simeq\, \{\overline{H_1}\}_r\stackrel{\frown}{\otimes}
\{H_2\}_r\stackrel{\frown}{\otimes}\{\overline{H_3}\}_r
\stackrel{\frown}{\otimes}
\{H_1\}_c
\stackrel{\frown}{\otimes}\{\overline{H_2}\}_c\stackrel{\frown}{\otimes}\{H_3\}_c.
$$
Using Proposition \ref{1Recap} (b) again, we have
$$
\{H_1\}_c
\stackrel{\frown}{\otimes}\{\overline{H_2}\}_c\stackrel{\frown}{\otimes}\{H_3\}_c
\simeq\{\H\}_c
\qquad\hbox{and}\qquad \{\overline{H_1}\}_r\stackrel{\frown}{\otimes}
\{H_2\}_r\stackrel{\frown}{\otimes}\{\overline{H_3}\}_r \simeq\{\overline{\H}\}_r
$$
completely isometrically.
By Proposition \ref{1Recap}  (c),
this yields a completely isometric identification
\begin{equation}\label{2Ident1}
\{\overline{H_3}\}_r\stackrel{\frown}{\otimes}
\Gamma(H_1,H_2,H_3)\stackrel{\frown}{\otimes}
\{H_1\}_c
\simeq S^1\bigl(\H).
\end{equation}
Passing to the duals, using (\ref{1S11}) and Proposition \ref{1Recap} (d),
we deduce a $w^*$-continuous
completely isometric identification
$$
B(\H)\,\simeq\, CB\bigl(\Gamma(H_1,H_2,H_3), B(H_1,H_3)\bigr).
$$
Combining with (\ref{2VN}), we deduce a $w^*$-continuous,
completely isometric onto
mapping
\begin{equation}\label{2J}
J\colon B(H_1)\overline{\otimes} B(H_2)^{op}\overline{\otimes} B(H_3)
\longrightarrow CB\bigl(\Gamma(H_1,H_2,H_3), B(H_1,H_3)\bigr).
\end{equation}

Now to establish the proposition it suffices to check that
\begin{equation}\label{2J=Tau}
J(\varphi)=\widetilde{\tau}_\varphi
\end{equation}
for any
$\varphi\in B(H_1)\overline{\otimes} B(H_2)^{op}\overline{\otimes} B(H_3)$.

We claim that it suffices to prove (\ref{2J=Tau}) in the
case when $\varphi$ belongs to the algebraic tensor
product $B(H_1)\otimes B(H_2)^{op}\otimes B(H_3)$.
Indeed let $\varphi\in B(H_1)\overline{\otimes} B(H_2)^{op}\overline{\otimes} B(H_3)$,
let $x\in S_2(H_1,H_2)$, $y\in S_2(H_2,H_3)$ and
$h\in H_1$. Assume that $(\varphi_t)_t$ is a net of
$B(H_1)\otimes B(H_2)^{op}\otimes B(H_3)$ converging
to $\varphi$ in the $w^*$-topology. Then
$\varphi_t(h\otimes y)\to \varphi(h\otimes y)$ in the weak topology
of $H_1\stackrel{2}{\otimes} \overline{H_2}\stackrel{2}{\otimes} H_3$. Hence
$\Theta[\varphi_t(h\otimes y)]\to \Theta[\varphi(h\otimes y)]$ in the weak topology
of $S^2(S^2(H_1,H_2),H_3)$, which implies that
$\Theta[\varphi_t(h\otimes y)](x)
\to \Theta[\varphi(h\otimes y)](x)$ in the weak topology
of $H_3$. Equivalently, $[\tau_{\varphi_t}(y,x)](h)\to [\tau_{\varphi}(y,x)](h)$
weakly. Since $J$ is $w^*$-continuous, we also have, by similar arguments, that
$[J(\varphi_t)(y\otimes x)](h)\to [J(\varphi)(y\otimes x)](h)$ weakly. Hence
if $J(\varphi_t)=\widetilde{\tau}_{\varphi_t}$ for any $t$, we have
$J(\varphi)=\widetilde{\tau}_\varphi$ as well.

Moreover by linearity, it suffices to prove (\ref{2J=Tau})
when $\varphi= R\otimes S \otimes T$ for some
$R\in B(H_1)$, $S\in B(H_2)$ and
$T\in B(H_3)$.
In view of (\ref{2Sigma2}), it therefore suffices to show that
\begin{equation}\label{2Check}
J(R\otimes S\otimes T)(y\otimes x ) = TySxR,
\end{equation}
for any $R\in B(H_1)$, $S\in B(H_2)$,
$T\in B(H_3)$,
$x\in S^2(H_1,H_2)$ and $y\in S^2(H_2,H_3)$.
Since $J$ is linear and $w^*$-continuous it actually suffices to
prove (\ref{2Check}) when $R$, $S$, $T$, $x$ and $y$ are rank one.

For $i=1,2,3$, let $\xi_i,\eta_i,h_i,k_i\in H_i$ and consider
$x=\overline{\xi_1}\otimes\eta_2$ and
$y= \overline{\xi_2}\otimes\eta_3$, as well as the operators
$R=h_1\otimes\overline{k_1}$, $S=h_2\otimes\overline{k_2}$ and
$T=h_3\otimes\overline{k_3}$ (see Remark \ref{1RankOne} for the
use of these tensor product notations).
Then let
$$
\alpha = \overline{\xi_1}\otimes\eta_2\otimes\overline{\xi_3}
\otimes \eta_1\otimes\overline{\xi_2}\otimes \eta_3\, \in
\, (\overline{H_1}\otimes
H_2\otimes \overline{H_3})\otimes (H_1\otimes \overline{H_2}\otimes H_3)
\,\subset\,
S^1(\H)
$$
and let
$$
\beta= h_1\otimes \overline{k_2}\otimes
h_3\otimes \overline{k_1}\otimes h_2\otimes \overline{k_3}
\, \in
\, (H_1\otimes \overline{H_2}\otimes H_3)\otimes  (\overline{H_1}\otimes
H_2\otimes \overline{H_3})\,\subset\,
B(\H).
$$
In the identification (\ref{2Ident1}), $\overline{\xi_3}\otimes y\otimes x
\otimes \eta_1$ corresponds to $\alpha$ whereas in the identification (\ref{2VN}),
$R\otimes S\otimes T$ corresponds to $\beta$. Hence
\begin{align*}
\bigl\langle\bigl[J(R\otimes S\otimes T)(y\otimes x)](\eta_1),\xi_3\bigr\rangle
& ={\rm tr}(\alpha\beta)\\
& =\langle h_1,\xi_1\rangle\langle \eta_2,k_2\rangle
\langle h_3,\xi_3\rangle\langle \eta_1,k_1\rangle
\langle h_2,\xi_2\rangle\langle \eta_3,k_3\rangle.
\end{align*}
On the other hand,
$$
TySxR = \langle h_1,\xi_1\rangle\langle \eta_2,k_2\rangle
\langle h_2,\xi_2\rangle\langle \eta_3,k_3\rangle
h_3\otimes\overline{k}_1,
$$
hence
\begin{equation}\label{2Trace}
\langle TySxR(\eta_1),\xi_3\rangle = \langle h_1,\xi_1\rangle\langle \eta_2,k_2\rangle
\langle h_3,\xi_3\rangle\langle \eta_1,k_1\rangle
\langle h_2,\xi_2\rangle\langle \eta_3,k_3\rangle.
\end{equation}
This proves the desired equality.
\end{proof}

\begin{remark}\label{2Rk} Using (\ref{1S1S1}) twice we have a
$w^*$-continuous completely
isometric identification
\begin{equation}\label{2Rk1}
B(H_1)\overline{\otimes} B(H_2)^{op}\overline{\otimes} B(H_3)\,\simeq\,
\bigl(S^1(H_1)\stackrel{\frown}{\otimes}
S^1(H_2)^{op}\stackrel{\frown}{\otimes}
S^1(H_3)\bigr)^*.
\end{equation}

Let $\varphi\in B(H_1)\overline{\otimes} B(H_2)^{op}\overline{\otimes} B(H_3)$
and let $u=\tau_\varphi$. Let
$\xi_1,\eta_1\in H_1$, $\xi_2,\eta_2\in H_2$ and $\xi_3,\eta_3\in H_3$ and
regard $\overline{\xi_i}\otimes\eta_i$ as an element of $S^1(H_i)$ for $i=1,2,3$.
According to (\ref{2Rk1}) we may consider the action of $\varphi$
on $\overline{\xi_1}\otimes\eta_1\otimes\eta_2\otimes
\overline{\xi_2}\otimes\overline{\xi_3}\otimes\eta_3$.
Then we have
$$
\langle\varphi, \overline{\xi_1}\otimes\eta_1\otimes\eta_2\otimes
\overline{\xi_2}\otimes\overline{\xi_3}\otimes\eta_3\rangle\,=\,
\bigl\langle \bigl[u(\overline{\xi_2}\otimes\eta_3, \overline{\xi_1}\otimes\eta_2)\bigr]
(\eta_1),\xi_3\bigr\rangle.
$$
Indeed this follows from the arguments
in the proof of Proposition \ref{2OM-B}. Details are left to the reader.
\end{remark}

Let $\varphi\in B(H_1)\overline{\otimes} B(H_2)^{op}\overline{\otimes} B(H_3)$.
We will say that $\tau_\varphi$
is an {\bf $S^1$-operator multiplier} if it takes values into
the trace class
$S^1(H_1,H_3)$ and there exists a constant 
$D\geq 0$ such that
$$
\norm{\tau_\varphi(y,x)}_1\leq D\norm{x}_2\norm{y}_2,\qquad
x\in S^2(H_1,H_2),\ y\in S^2(H_2,H_3).
$$

Note that, by (\ref{2Sigma2}), $\tau_\varphi$ is an
$S^1$-operator multiplier when $\varphi$ is of the form $R\otimes S \otimes T$.
Consequently,  $\tau_\varphi$ is an
$S^1$-operator multiplier whenever $\varphi$ belongs to the algebraic
tensor product $B(H_1)\otimes B(H_2)\otimes B(H_3)$.

In this paper we will be mostly interested in {\bf completely bounded
$S^1$-operator multipliers}, that is, $S^1$-operator multipliers $\tau_\varphi$
such that $\widetilde{\tau_\varphi}$ is a completely bounded map from $\Gamma(H_1,H_2,H_3)$ into
$S^1(H_1,H_3)$. Note that the canonical inclusion $S^1(H_1,H_3)\subset B(H_1,H_3)$
is a complete contraction, hence
$$
CB(\Gamma(H_1,H_2,H_3), S^1(H_1,H_3))\,\subset\,
CB(\Gamma(H_1,H_2,H_3), B(H_1,H_3))\qquad\hbox{contractively.}
$$
It therefore follows from Proposition
\ref{2OM-B} that the space of all completely bounded
$S^1$-operator multipliers coincides with the space
$CB(\Gamma(H_1,H_2,H_3), S^1(H_1,H_3))$.
The following statement provides a characterization.

\begin{lemma}\label{2CB}
Let $u\colon S^2(H_2,H_3)\times S^2(H_1,H_2)\to S^1(H_1,H_3)$
be a bounded bilinear map and let
$D>0$ be a constant.
Then $\widetilde{u}\in CB(\Gamma(H_1,H_2,H_3), S^1(H_1,H_3))$ and
$\cbnorm{\widetilde{u}}\leq D$ if and only
if for any $n\geq 1$, for any $x_1,\ldots,x_n\in S^2(H_1,H_2)$ and for any
$y_1,\ldots,y_n\in S^2(H_2,H_3)$,
$$
\bignorm{\bigl[u(y_i,x_j)\bigr]_{1\leq i,j\leq n}}_{S^1(\ell^2_n(H_1),
\ell^2_n(H_3))}\,\leq D\,\Bigl(\sum_{j=1}^n
\norm{x_j}^2_2\Bigr)^{\frac12}\Bigl(\sum_{i=1}^n
\norm{y_i}^2_2\Bigr)^{\frac12}.
$$
\end{lemma}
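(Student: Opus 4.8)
The plan is to read off the completely bounded norm of $\widetilde{u}$ from its matrix amplifications and to recognise the displayed inequality as the estimate obtained by testing these amplifications on a distinguished family of elements of $M_n(\Gamma(H_1,H_2,H_3))$. Write $X=S^2(H_1,H_2)$ and $Y=S^2(H_2,H_3)$, so that $\Gamma(H_1,H_2,H_3)=\{Y\}_c\stackrel{\frown}{\otimes}\{X\}_r$ by (\ref{2OS-Gamma}). First I recall that $\cbnorm{\widetilde u}\leq D$ if and only if $\norm{\widetilde u_n}\leq D$ for every $n\geq 1$, where $\widetilde u_n\colon M_n(\Gamma(H_1,H_2,H_3))\to M_n(S^1(H_1,H_3))$ is the $n$-th amplification, and that the canonical operator space structure on the trace class gives a completely isometric identification $M_n(S^1(H_1,H_3))\simeq S^1(\ell^2_n(H_1),\ell^2_n(H_3))$. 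Since $\widetilde u_n([w_{ij}])=[\widetilde u(w_{ij})]$, the matrix $[u(y_i,x_j)]_{1\leq i,j\leq n}$ of the statement is exactly $\widetilde u_n(W)$ for the distinguished element $W=[\,y_i\otimes x_j\,]_{ij}\in M_n(\Gamma(H_1,H_2,H_3))$.

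For necessity I would use the cross-norm property of the operator space projective tensor product. The element $W$ is the image of $\bigl((y_i)_i,(x_j)_j\bigr)$ under the canonical completely contractive multiplication $M_{n,1}(\{Y\}_c)\times M_{1,n}(\{X\}_r)\to M_n(\{Y\}_c\stackrel{\frown}{\otimes}\{X\}_r)$, so that $\norm{W}_{M_n(\Gamma(H_1,H_2,H_3))}\leq\norm{(y_i)_i}_{M_{n,1}(\{Y\}_c)}\norm{(x_j)_j}_{M_{1,n}(\{X\}_r)}$. As $M_{n,1}(\{Y\}_c)$ and $M_{1,n}(\{X\}_r)$ carry Hilbertian norms, the two factors equal $\bigl(\sum_i\norm{y_i}_2^2\bigr)^{1/2}$ and $\bigl(\sum_j\norm{x_j}_2^2\bigr)^{1/2}$. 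Combined with $\norm{\widetilde u_n(W)}\leq\cbnorm{\widetilde u}\,\norm{W}$ and the identifications above, this yields the stated inequality with $D=\cbnorm{\widetilde u}$.

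For sufficiency I would run the reverse estimate on a general element. Fix $n$ and $W\in M_n(\Gamma(H_1,H_2,H_3))$ with $\norm{W}<1$, and use the matrix description of the operator space projective tensor product (\cite[1.5.11]{BLM}) to write $W=\alpha\,(a\otimes b)\,\beta$ with $a\in M_s(\{Y\}_c)$, $b\in M_t(\{X\}_r)$, $\norm{a},\norm{b}\leq 1$, and scalar matrices $\alpha,\beta$ with $\norm{\alpha}\norm{\beta}<1$. Applying the linear map $\widetilde u_n$ turns $W$ into $\alpha\,\mathcal{M}\,\beta$, where $\mathcal{M}$ is the trace-class block matrix whose entries are the values of $u$ on the entries of $a$ and $b$, and where $\alpha,\beta$ act by left and right multiplication; hence $\norm{\widetilde u_n(W)}_1\leq\norm{\alpha}\,\norm{\mathcal{M}}_1\,\norm{\beta}<\norm{\mathcal{M}}_1$. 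Everything then reduces to the estimate $\norm{\mathcal{M}}_1\leq D$, which I would obtain from the hypothesis after rewriting $\mathcal{M}$, by means of the column structure of $a$ and the row structure of $b$, as a matrix of the clean form $[u(y_i,x_j)]$.

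The main obstacle is exactly this last estimate: one must carry it out so that the $\ell^2$ (Hilbert-Schmidt) norms appearing on the right-hand side of the hypothesis reproduce the operator norms $\norm{a}\leq 1$ and $\norm{b}\leq 1$ of the column and row factors, and not a strictly larger quantity. A naive Schmidt decomposition of $a$ and $b$ would replace these operator norms by Hilbert-Schmidt norms and forfeit a factor depending on the internal ranks. The resolution is that the multiplications underlying the cross-norm estimate are genuinely Hilbertian, so a Cauchy-Schwarz bound of the type $\norm{PQ}_1\leq\norm{P}_{HS}\norm{Q}_{HS}$ for $\norm{\mathcal{M}}_1$ reproduces precisely the Hilbertian norms on the right; the elementary case $H_1=H_3=\Cdb$, where $\mathcal{M}$ is a product of rectangular scalar matrices and the bound is sharp, already exhibits this mechanism. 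Making this matching precise, rather than the formal reduction, is where the weight of the argument lies.
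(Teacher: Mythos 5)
Your argument breaks down at its very first step: the claimed identification $M_n\bigl(S^1(H_1,H_3)\bigr)\simeq S^1\bigl(\ell^2_n(H_1),\ell^2_n(H_3)\bigr)$ is false. The canonical operator space structure on the trace class (dual to $S^\infty$) gives $M_n(S^1(H_1,H_3))=M_n\otimes_{\min}S^1(H_1,H_3)$, whose norm is in general strictly smaller than the trace norm of the block matrix; for $H_1=H_3=\Cdb$ one gets $M_n(\Cdb)=M_n$ with the \emph{operator} norm, while $S^1(\ell^2_n,\ell^2_n)$ carries the trace norm, and the two differ by a factor as large as $n$. The space that is isometric to $S^1(\ell^2_n(H_1),\ell^2_n(H_3))$ is $S^1_n\stackrel{\frown}{\otimes}S^1(H_1,H_3)$, i.e.\ Pisier's $S^1_n$-valued tensorization, not the $M_n$-amplification. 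Consequently your necessity argument fails: testing $\widetilde u_n$ on the element $W=[y_i\otimes x_j]$ only bounds $\bignorm{[u(y_i,x_j)]}_{M_n(S^1(H_1,H_3))}$, which is dominated by, and can be much smaller than, the block trace norm that the lemma asserts. This is precisely why the paper does not use ordinary amplifications but instead invokes \cite[Lemma 1.7]{P1}: $\cbnorm{\widetilde u}=\sup_n\bignorm{I_{S^1_n}\otimes\widetilde u\colon S^1_n\stackrel{\frown}{\otimes}\Gamma(H_1,H_2,H_3)\to S^1_n\stackrel{\frown}{\otimes}S^1(H_1,H_3)}$. With that formulation the domain collapses, via row/column absorption and Remark \ref{1Equal}, to the \emph{Banach} projective tensor product $\bigl(\ell^2_n\stackrel{2}{\otimes}S^2(H_1,H_2)\bigr)\widehat{\otimes}\bigl(\ell^2_n\stackrel{2}{\otimes}S^2(H_2,H_3)\bigr)$, whose norm is computed on elementary tensors; both directions of the lemma then follow at once, with no matrix decompositions at all.

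Your sufficiency direction has a second, independent gap, which you yourself flag but do not close. After the factorization $W=\alpha(a\otimes b)\beta$, the middle matrix has entries $\mathcal{M}_{(i,k),(j,l)}=u(y_{ij},x_{kl})$, where the first argument depends on both the row index $i$ and the column index $j$, and likewise for the second argument. The hypothesis of the lemma only controls matrices of the separated form $[u(y_i,x_j)]_{i,j}$, with $y$ depending on the row index alone and $x$ on the column index alone. The passage from $\mathcal{M}$ to the separated form is not a formality that Cauchy--Schwarz settles; it is the entire analytic content of this direction, and as written your proof does not contain it. So even granting the (favorable) inequality $\norm{\cdot}_{M_n(S^1)}\leq\norm{\cdot}_{S^1(\ell^2_n(H_1),\ell^2_n(H_3))}$ that would let a block-trace-norm bound imply complete boundedness, the bound $\norm{\mathcal{M}}_1\leq D$ is never established.
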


\begin{proof}
For any $n\geq 1$, we use the classical notations
$R_n=\{\ell^2_n\}_r, C_n=\{\ell^2_n\}_c$ and
$S^1_n=S^1(\ell^2_n)$.

Consider $u$ as above and set
$$
d_n = \bignorm{I_{S^1_n}\otimes \widetilde{u} \colon
S_n^1\stackrel{\frown}{\otimes} \Gamma(H_1,H_2,H_3)
\longrightarrow S_n^1\stackrel{\frown}{\otimes}S^1(H_1,H_3)}
$$
for any $n\geq 1$.
By \cite[Lemma 1.7]{P1}, $\widetilde{u}\in
CB\bigl(\Gamma(H_1,H_2,H_3), S^1(H_1,H_3)\bigr)$ if and only if
the sequence $(d_n)_{n\geq 1}$ is bounded and in this case,
$\cbnorm{\widetilde{u}}=\sup_n d_n$.

By Proposition \ref{1Recap} (c),
$$
S_n^1\stackrel{\frown}{\otimes} \Gamma(H_1,H_2,H_3)
\,\simeq\,
R_n\stackrel{\frown}{\otimes} \{S^2(H_1,H_2)\}_r
\stackrel{\frown}{\otimes}\{S^2(H_2,H_3)\}_c\stackrel{\frown}{\otimes} C_n
$$
completely isometrically. Using Proposition \ref{1Recap} (b), this yields
$$
S_n^1\stackrel{\frown}{\otimes} \Gamma(H_1,H_2,H_3)
\,\simeq\,
\bigl\{\ell^2_n\stackrel{2}{\otimes}S^2(H_1,H_2)\bigr\}_r\stackrel{\frown}{\otimes}
\bigl\{\ell^2_n\stackrel{2}{\otimes}S^2(H_2,H_3)\bigr\}_c.
$$
Applying Remark \ref{1Equal}, we derive that
$$
S_n^1\stackrel{\frown}{\otimes} \Gamma(H_1,H_2,H_3)
\,\simeq\,
\bigl(\ell^2_n\stackrel{2}{\otimes}S^2(H_1,H_2)\bigr)\,\widehat{\otimes}\,
\bigl(\ell^2_n\stackrel{2}{\otimes}S^2(H_2,H_3)\bigr)
$$
isometrically.

Similarly,
\begin{align*}
S_n^1 \stackrel{\frown}{\otimes} S^1(H_1,H_3)
& \,\simeq\,
R_n \stackrel{\frown}{\otimes}  S^1(H_1,H_3)
\stackrel{\frown}{\otimes} C_n
\\
& \,\simeq\,
R_n\stackrel{\frown}{\otimes} \{\overline{H_1}\}_r
\stackrel{\frown}{\otimes} \{H_3\}_c\stackrel{\frown}{\otimes} C_n
\\
& \,\simeq\,
\bigl\{\ell^2_n\stackrel{2}{\otimes}\overline{H_1}\bigr\}_r
\stackrel{\frown}{\otimes}
\bigl\{\ell^2_n\stackrel{2}{\otimes} H_3\bigr\}_c
\\
& \,\simeq\,
S^1\bigl(\ell^2_n(H_1), \ell^2_n(H_3)\bigr)
\end{align*}
isometrically.
Hence a thorough look at these identifications shows that
$$
d_n = \sup\Bigl\{\bignorm{\bigl[u(y_i,x_j)\bigr]_{1\leq i,j\leq n}}_{S^1(\ell^2_n(H_1),
\ell^2_n(H_3))}\Bigr\},
$$
where the supremum runs over all
$$
(x_1,\ldots,x_n)\in \ell^2_n\stackrel{2}{\otimes}S^2(H_1,H_2)
\qquad\hbox{and}\qquad
(y_1,\ldots,y_n)\in\ell^2_n\stackrel{2}{\otimes}S^2(H_2,H_3)
$$
of norms less than
or equal to $1$. This yields the result.
\end{proof}

The next result, which should be compared to Proposition \ref{2OM-B},
provides a characterization of completely bounded
$S^1$-operator multipliers. Before stating it, we note
that we have $S^1(H_1)\stackrel{\frown}{\otimes}
S^1(H_3)\subset S^1(H_1)\stackrel{h}{\otimes}
S^1(H_3)$ completely contractively (see e.g. \cite[Theorem 9.2.1]{ER}).
Consequently
$$
CB\bigl(S^1(H_1)\stackrel{h}{\otimes}
S^1(H_3), B(H_2)^{op}\bigr)\,\subset\,
CB\bigl(S^1(H_1)\stackrel{\frown}{\otimes}
S^1(H_3), B(H_2)^{op}\bigr)
$$
contractively. Applying Lemma \ref{1Injective1} (c), and using
(\ref{1S1S1}) and (\ref{1w*h}), we deduce a
contractive embedding
$$
B(H_2)^{op}\overline{\otimes}\bigl(
B(H_1)\stackrel{w^*h}{\otimes}B(H_3)\bigr)\,\subset\,
B(H_1)\overline{\otimes} B(H_2)^{op}
\overline{\otimes} B(H_3).
$$

\begin{theorem}\label{2OM-S1}
Let $\varphi\in B(H_1)\overline{\otimes} B(H_2)^{op}\overline{\otimes} B(H_3)$.
Then $\tau_\varphi$ is a completely bounded $S^1$-operator
multiplier if and only if $\varphi$ belongs to
$B(H_2)^{op}\overline{\otimes}\bigl(
B(H_1)\stackrel{w^*h}{\otimes}B(H_3)\bigr)$. Further (\ref{2OM-B1})
restricts to a $w^*$-continuous
completely isometric identification
\begin{equation}\label{2Ident3}
B(H_2)^{op}\overline{\otimes}\bigl(
B(H_1)\stackrel{w^*h}{\otimes}B(H_3)\bigr)\,\simeq\,
CB\bigl(\Gamma(H_1,H_2,H_3), S^1(H_1,H_3)\bigr).
\end{equation}
\end{theorem}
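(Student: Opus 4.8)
The plan is to realize both sides of (\ref{2Ident3}) as duals of one and the same operator space and then to check that the resulting identification is the one induced by (\ref{2OM-B1}). Throughout I work through the completely isometric $w^*$-homeomorphism $J$ of (\ref{2J}), under which $\varphi$ corresponds to $\widetilde{\tau}_\varphi$, and I use that, by the remark following Lemma \ref{2CB}, the completely bounded $S^1$-operator multipliers are exactly the elements of $CB(\Gamma(H_1,H_2,H_3), S^1(H_1,H_3))$. First I would put the target space in dual form. By (\ref{1S11}) we have $S^1(H_1,H_3)\simeq S^\infty(H_3,H_1)^*$ completely isometrically, so (\ref{1Duality4}) gives a $w^*$-continuous completely isometric identification
$$
CB\bigl(\Gamma(H_1,H_2,H_3), S^1(H_1,H_3)\bigr)\,\simeq\,\bigl(S^\infty(H_3,H_1)\stackrel{\frown}{\otimes}\Gamma(H_1,H_2,H_3)\bigr)^*.
$$
Call the predual on the right $P'$.

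Next I would put the left-hand algebra of (\ref{2Ident3}) in dual form. Since $B(H_2)^{op}\simeq B(\overline{H_2})$ by Lemma \ref{1Opp}, Lemma \ref{1Injective1}(c), applied with the operator space $S^1(H_1)\stackrel{h}{\otimes}S^1(H_3)$ whose dual is $B(H_1)\stackrel{w^*h}{\otimes}B(H_3)$ by (\ref{1w*h}), yields
$$
B(H_2)^{op}\overline{\otimes}\bigl(B(H_1)\stackrel{w^*h}{\otimes}B(H_3)\bigr)\,\simeq\,CB\bigl(S^1(H_1)\stackrel{h}{\otimes}S^1(H_3),\,B(H_2)^{op}\bigr).
$$
As $B(H_2)^{op}=\bigl(S^1(H_2)^{op}\bigr)^*$, a further application of (\ref{1Duality4}) identifies this, $w^*$-continuously and completely isometrically, with $P^*$, where $P:=S^1(H_2)^{op}\stackrel{\frown}{\otimes}\bigl(S^1(H_1)\stackrel{h}{\otimes}S^1(H_3)\bigr)$.

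The heart of the argument is then the completely isometric identification $P\simeq P'$. To obtain it I would expand both preduals into elementary row and column pieces. Using (\ref{1RC}) and Proposition \ref{1Recap}(a) one has $S^1(H_i)\simeq\{\overline{H_i}\}_r\stackrel{h}{\otimes}\{H_i\}_c$, so by associativity of the Haagerup tensor product
$$
S^1(H_1)\stackrel{h}{\otimes}S^1(H_3)\,\simeq\,\{\overline{H_1}\}_r\stackrel{h}{\otimes}\{H_1\}_c\stackrel{h}{\otimes}\{\overline{H_3}\}_r\stackrel{h}{\otimes}\{H_3\}_c,
$$
in which the middle factor $\{H_1\}_c\stackrel{h}{\otimes}\{\overline{H_3}\}_r$ equals $S^\infty(H_3,H_1)$ by (\ref{1Compact}). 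The two remaining outer factors (a row on the left, a column on the right) may be turned into projective tensor products by Proposition \ref{1Recap}(a); together with the expansion $S^1(H_2)^{op}\simeq\{\overline{H_2}\}_c\stackrel{\frown}{\otimes}\{H_2\}_r$ and the commutativity of $\stackrel{\frown}{\otimes}$, this reduces everything to projective factors. Reassembling $\{\overline{H_2}\}_c\stackrel{\frown}{\otimes}\{H_3\}_c\simeq\{S^2(H_2,H_3)\}_c$ and $\{\overline{H_1}\}_r\stackrel{\frown}{\otimes}\{H_2\}_r\simeq\{S^2(H_1,H_2)\}_r$ by Proposition \ref{1Recap}(b) and (\ref{1HS}), and recalling the definition (\ref{2OS-Gamma}) of $\Gamma$, lands precisely on $P'=S^\infty(H_3,H_1)\stackrel{\frown}{\otimes}\Gamma(H_1,H_2,H_3)$. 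Dualizing $P\simeq P'$ gives a $w^*$-continuous completely isometric identification between the two sides of (\ref{2Ident3}).

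It remains to verify that this abstract identification is the restriction of (\ref{2OM-B1}), that is, that it carries $\varphi$ to $\widetilde{\tau}_\varphi$; this is where I expect the real work to lie. Both $P^*$ and $P'^*$ are determined by their pairings against the (norm dense, $w^*$-dense) rank-one elementary tensors of the common predual, so I would compute, on such tensors, the pairing of $\varphi$ arising from the $P^*$-realization on one hand, and the trace pairing $\gamma\otimes c\mapsto{\rm tr}\bigl(\widetilde{\tau}_\varphi(\gamma)\,c\bigr)$ defining the $P'^*$-realization on the other, and show they agree; the explicit formula of Remark \ref{2Rk} for the action of $\varphi$ on rank-one tensors is exactly what is needed to match them. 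Since both realizations are $w^*$-continuous and coincide on a $w^*$-dense set, they are equal, so $\varphi\mapsto\widetilde{\tau}_\varphi$ is a completely isometric $w^*$-homeomorphism of $B(H_2)^{op}\overline{\otimes}(B(H_1)\stackrel{w^*h}{\otimes}B(H_3))$ onto $CB(\Gamma(H_1,H_2,H_3),S^1(H_1,H_3))$. Combined with the identification of the latter with the completely bounded $S^1$-operator multipliers, this yields simultaneously the asserted equivalence and the fact that (\ref{2OM-B1}) restricts to (\ref{2Ident3}). The main obstacle is precisely the bookkeeping in the predual computation — tracking the conjugations and the single surviving Haagerup factor through the opposite space $S^1(H_2)^{op}$ — together with the verification that the two dual pairings genuinely coincide, so that the identification really is $J$ and not merely some abstract isomorphism between the correct spaces.
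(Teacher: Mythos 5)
Your proposal is correct and follows essentially the same route as the paper's own proof: the paper likewise identifies the common predual $S^\infty(H_3,H_1)\stackrel{\frown}{\otimes}\Gamma(H_1,H_2,H_3)$ with $S^1(\overline{H_2})\stackrel{\frown}{\otimes}\bigl(S^1(H_1)\stackrel{h}{\otimes}S^1(H_3)\bigr)$ through the same row/column expansions (its identifications (\ref{2Ident2}) and (\ref{2Ident4})), dualizes to obtain a $w^*$-continuous completely isometric onto map, and then verifies agreement with $\tau$ by the same rank-one elementary tensor computation you outline, invoking Remark \ref{2Rk} and $w^*$-density. The only cosmetic difference is that the paper carries $S^1(\overline{H_2})$ through the predual computation and converts to $B(H_2)^{op}$ at the dual level via Lemma \ref{1Opp}, whereas you work with $S^1(H_2)^{op}$ throughout.
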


\begin{proof} The scheme of proof is similar to the one of Proposition \ref{2OM-B}.
Recall (\ref{2Gamma}) from this proof.
On the one hand, using
commutativity of the operator space projective tensor product, we deduce a
completely isometric identification
$$
\Gamma(H_1,H_2,H_3)
\,\simeq\, \{H_2\}_r \stackrel{\frown}{\otimes}  \{\overline{H}_2\}_c
\stackrel{\frown}{\otimes} \{\overline{H}_1\}_r \stackrel{\frown}{\otimes}
\{H_3\}_c,
$$
and then, by Proposition \ref{1Recap} (c),
\begin{equation}\label{2Ident2}
\Gamma(H_1,H_2,H_3) \,\simeq\, S^1(\overline{H}_2)
\stackrel{\frown}{\otimes} S^1(H_1,H_3).
\end{equation}
On the other hand, it follows from (\ref{1REC}) and Proposition \ref{1Recap} (a) that
$$
S^1(H_1,H_3)
\stackrel{\frown}{\otimes} S^\infty(H_3,H_1)
\,\simeq\,\{\overline{H_1}\}_r\stackrel{h}{\otimes} S^\infty(H_3,H_1)\stackrel{h}{\otimes}\{H_3\}_c.
$$
Then using (\ref{1Compact}), we deduce
that
$$
S^1(H_1,H_3)
\stackrel{\frown}{\otimes} S^\infty(H_3,H_1)
\,\simeq\, \bigl(\{\overline{H_1}\}_r\stackrel{h}{\otimes}
\{H_1\}_c\bigr)
\stackrel{h}{\otimes} \bigl(\{\overline{H_3}\}_r\stackrel{h}{\otimes}\{H_3\}_c\bigr).
$$
Applying Proposition \ref{1Recap} (a) again together with (\ref{1RC}), we obtain that
$$
S^1(H_1,H_3)
\stackrel{\frown}{\otimes} S^\infty(H_3,H_1)
\,\simeq\,  S^1(H_1) \stackrel{h}{\otimes} S^1(H_3)
$$
completely isometrically.

Combining the last identification with (\ref{2Ident2}), we find
\begin{equation}\label{2Ident4}
\Gamma(H_1,H_2,H_3)\stackrel{\frown}{\otimes} S^\infty(H_3,H_1)\,\simeq\,
S^1(\overline{H}_2)
\stackrel{\frown}{\otimes}\bigl(S^1(H_1) \stackrel{h}{\otimes} S^1(H_3)\bigr).
\end{equation}

We now pass to duals. First by (\ref{1Duality4}) and (\ref{1S11}), we have
a $w^*$-continuous completely isometric identification
$$
\bigl(\Gamma(H_1,H_2,H_3)\stackrel{\frown}{\otimes} S^\infty(H_3,H_1)\bigr)^*
\,\simeq\,
CB\bigl(\Gamma(H_1,H_2,H_3), S^1(H_1,H_3)\bigr).
$$
Second by (\ref{1Duality4}) and Lemma \ref{1Opp}, we have
$w^*$-continuous completely isometric identifications
\begin{align*}
\bigl(S^1(\overline{H}_2)
\stackrel{\frown}{\otimes}\bigl(S^1(H_1) \stackrel{h}{\otimes} S^1(H_3)\bigr)\bigr)^*
\,&\simeq\,
CB\bigl(S^1(H_1) \stackrel{h}{\otimes} S^1(H_3),B(\overline{H_2})\bigr)\\
&\simeq\, CB\bigl(S^1(H_1) \stackrel{h}{\otimes} S^1(H_3),B(H_2)^{op}\bigr).
\end{align*}
Equivalently, by Lemma \ref{1Injective1} (c), we have
$$
\bigl(S^1(\overline{H}_2)
\stackrel{\frown}{\otimes}\bigl(S^1(H_1) \stackrel{h}{\otimes} S^1(H_3)\bigr)\bigr)^*
\,\simeq\,B(H_2)^{op}\overline{\otimes}\bigl(B(H_1)\stackrel{w^*h}{\otimes} B(H_3)\bigr).
$$
Thus (\ref{2Ident4}) yields a $w^*$-continuous, completely isometric onto mapping
$$
L\colon B(H_2)^{op}\overline{\otimes}\bigl(B(H_1)\stackrel{w^*h}{\otimes} B(H_3)\bigr)
\longrightarrow
CB\bigl(\Gamma(H_1,H_2,H_3), S^1(H_1,H_3)\bigr).
$$
Arguing as in the proof of Proposition \ref{2OM-B}, it now suffices to
show that
for any $R\in B(H_1)$, $S\in B(H_2)$ and
$T\in B(H_3)$,
$L(S\otimes R\otimes T)$ coincides with
$\widetilde{\tau}_{R\otimes S\otimes T}$.  Next, it suffices to show that
\begin{equation}\label{2Check2}
L(S\otimes R\otimes T)(y\otimes x)  = TySxR
\end{equation}
when $R,S,T$ are rank one and when $x\in S^2(H_1,H_2)$ and
$y\in S^2(H_2,H_3)$ are rank one.

We let $\xi_i,\eta_i,h_i,k_i\in H_i$ for $i=1,2,3$ and
consider
$R=h_1\otimes\overline{k_1}$, $S=h_2\otimes\overline{k_2}$,
$T=h_3\otimes\overline{k_3}$, $x=\overline{\xi_1}\otimes\eta_2$ and
$y= \overline{\xi_2}\otimes\eta_3$.
Then $y\otimes x\in \Gamma(H_1,H_2,H_3)$ corresponds
to $(\eta_2\otimes\overline{\xi_2})\otimes
(\overline{\xi_1}\otimes\eta_3)\in S^1(\overline{H}_2)\otimes
S^1(H_1,H_3)$ in the identification (\ref{2Ident2}). Hence
$y\otimes x\otimes(\eta_1\otimes\overline{\xi_3})$ regarded as an element
of
$\Gamma(H_1,H_2,H_3)\otimes S^\infty(H_3,H_1)$
corresponds
to
$$
(\eta_2\otimes\overline{\xi_2})\otimes
(\overline{\xi_1}\otimes\eta_1)\otimes(\overline{\xi_3}\otimes\eta_3)
\,\in\, S^1(\overline{H}_2)\otimes
S^1(H_1)\otimes S^1(H_3)
$$
in the identification (\ref{2Ident4}).
Since
$$
\widehat{S}\otimes R\otimes T=\overline{k_2}\otimes h_2\otimes
h_1\otimes\overline{k_1}\otimes h_3\otimes\overline{k_3}
\,\in\, B(\overline{H}_2)\otimes B(H_1)\otimes B(H_3),
$$
we then have
$$
\bigl\langle\bigl[
L(S\otimes R\otimes T)(y\otimes x )\bigr](\eta_1),\xi_3\bigr\rangle =
\langle \eta_2,k_2\rangle\langle h_2,\xi_2\rangle
\langle h_1,\xi_1\rangle\langle \eta_1,k_1\rangle
\langle h_3,\xi_3\rangle\langle \eta_3,k_3\rangle.
$$
By (\ref{2Trace}), the right hand side of this equality is equal
to $\langle TySxR(\eta_1),\xi_3\rangle$. This proves the identity (\ref{2Check2}),
and hence the result.
\end{proof}

\section{Module maps}\label{4MOD}

As in the previous section, we consider three Hilbert spaces $H_1,H_2,H_3$. We further
consider von Neumann subalgebras
$$
M_1\subset B(H_1),\qquad
M_2\subset B(H_2)\quad\hbox{and}\qquad
M_3\subset B(H_3)
$$
acting on these spaces. For $i=1,2,3$, we let $M_i'\subset B(H_i)$ be the commutant
of $M_i$.

Let $u\colon  S^2(H_2,H_3)\times S^2(H_1,H_2)\to B(H_1,H_3)$ be a bounded bilinear operator.
We say that $u$ is an $(M'_3,M'_2,M'_1)$-module map (or is $(M'_3,M'_2,M'_1)$-modular)
provided that
$$
u(Ty,x)=Tu(y,x),\qquad u(y,xR)=u(y,x)R\quad\hbox{and}\quad
u(yS,x)=u(y,Sx)
$$
for any $x\in S^2(H_1,H_2)$, $y\in S^2(H_2,H_3)$, $R\in M'_1$, $S\in M'_2$ and $T\in M'_3$.

It will be convenient to associate to $u$ the following $4$-linear bounded operators.
We define
\begin{equation}\label{3U11}
U_1\colon \overline{H_2}\times H_2\times \overline{H_3}\times H_3\longrightarrow B(H_1)
\end{equation}
by
\begin{equation}\label{3U12}
\bigl\langle \bigl[U_1(\overline{\xi_2},\eta_2,\overline{\xi_3},\eta_3)\bigr]
(\eta_1),\xi_1\bigr\rangle
\,=\, \bigl\langle \bigl[u(\overline{\xi_2}\otimes\eta_3, \overline{\xi_1}\otimes\eta_2)\bigr]
(\eta_1),\xi_3\bigr\rangle
\end{equation}
for any $\xi_1,\eta_1\in H_1$, $\xi_2,\eta_2\in H_2$ and $\xi_3,\eta_3\in H_3$. Likewise we define
$$
U_2\colon\overline{H_1}\times H_1\times \overline{H_3}\times H_3\to B(H_2)
\qquad\hbox{and}\qquad
U_3\colon\overline{H_1}\times H_1\times \overline{H_2}\times H_2\to B(H_3)
$$
by
\begin{align*}
\bigl\langle \bigl[U_2(\overline{\xi_1},\eta_1,\overline{\xi_3},\eta_3)\bigr]
(\eta_2),\xi_2\bigr\rangle
\,&=\, \bigl\langle \bigl[u(\overline{\xi_2}\otimes\eta_3, \overline{\xi_1}\otimes\eta_2)\bigr]
(\eta_1),\xi_3\bigr\rangle\\
\bigl\langle \bigl[U_3(\overline{\xi_1},\eta_1,\overline{\xi_2},\eta_2)\bigr]
(\eta_3),\xi_3\bigr\rangle
\,&=\, \bigl\langle \bigl[u(\overline{\xi_2}\otimes\eta_3, \overline{\xi_1}\otimes\eta_2)\bigr]
(\eta_1),\xi_3\bigr\rangle.
\end{align*}

\begin{lemma}\label{3LemMod}
Let $u\in B_2\bigl(S^2(H_2,H_3)\times S^2(H_1,H_2), B(H_1,H_3)\bigr)$.
Then $u$ is an $(M'_3,M'_2,M'_1)$-module map if and only if
for any $i=1,2,3$, $U_i$ is valued in $M_i$.
\end{lemma}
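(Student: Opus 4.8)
The plan is to prove the three module identities and the three membership conditions $U_i\in M_i$ in a matched, one-to-one fashion: the relation $u(Ty,x)=Tu(y,x)$ for $T\in M_3'$ will correspond to $U_3$ being valued in $M_3$, the relation $u(y,xR)=u(y,x)R$ for $R\in M_1'$ to $U_1$ being valued in $M_1$, and the relation $u(yS,x)=u(y,Sx)$ for $S\in M_2'$ to $U_2$ being valued in $M_2$. The conceptual engine is the double commutant theorem: since each $M_i$ is a von Neumann algebra, an operator $A\in B(H_i)$ lies in $M_i=(M_i')'$ if and only if $A$ commutes with every element of $M_i'$. Thus each membership statement $U_i(\,\cdots)\in M_i$ is equivalent to a whole family of commutation relations, and the task reduces to translating those commutators into the module identities, one variable at a time.

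To carry out the translation I would work at the level of rank one operators. Writing $x=\overline{\xi_1}\otimes\eta_2$ and $y=\overline{\xi_2}\otimes\eta_3$, the four relevant one-sided actions follow at once from the rank one convention $(\overline{\xi}\otimes\eta)(\zeta)=\langle\zeta,\xi\rangle\eta$: namely $Ty=\overline{\xi_2}\otimes T\eta_3$, $xR=\overline{R^*\xi_1}\otimes\eta_2$, $yS=\overline{S^*\xi_2}\otimes\eta_3$ and $Sx=\overline{\xi_1}\otimes S\eta_2$. Substituting these into the defining formula (\ref{3U12}) for $U_1$ and into its analogues for $U_2,U_3$ and comparing matrix coefficients, one finds for instance that $\langle U_3(\,\cdots)T\eta_3,\xi_3\rangle=\langle [u(Ty,x)](\eta_1),\xi_3\rangle$ while $\langle TU_3(\,\cdots)\eta_3,\xi_3\rangle=\langle [Tu(y,x)](\eta_1),\xi_3\rangle$, so that $U_3(\,\cdots)$ commutes with $T$ exactly when $u(Ty,x)=Tu(y,x)$ on these rank one arguments. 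The analogous computations, in which the adjoint $R^*$ (resp. $S^*$) lands on the bra vector and is moved across by the identity $\langle A\eta,W^*\xi\rangle=\langle WA\eta,\xi\rangle$, show that $U_1(\,\cdots)$ commutes with $R\in M_1'$ if and only if $u(y,xR)=u(y,x)R$, and that $U_2(\,\cdots)$ commutes with $S\in M_2'$ if and only if $u(yS,x)=u(y,Sx)$.

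Finally I would close the gap between rank one arguments and arbitrary Hilbert--Schmidt operators. For fixed $T\in M_3'$ and fixed $x$, both $y\mapsto u(Ty,x)$ and $y\mapsto Tu(y,x)$ are bounded and linear on $S^2(H_2,H_3)$; since the finite rank operators are dense there and the two maps agree on rank one elements, they agree everywhere, and the same density argument in the remaining variable promotes the identity to all of $S^2(H_2,H_3)\times S^2(H_1,H_2)$. Running this for each of the three axioms yields the equivalence. I expect the main obstacle to be purely bookkeeping rather than conceptual: one must track the conjugate linear notation $\overline{\,\cdot\,}$ and the placement of the adjoints $R^*,S^*,T^*$ with care, and be scrupulous that each family of commutation relations is quantified over \emph{all} of $M_i'$, so that the double commutant theorem delivers membership in $M_i$ and not merely in some smaller set. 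Once the rank one dictionary is set up correctly, the equivalence $U_i\in M_i\Leftrightarrow$ ($i$-th module identity) is immediate and the three cases assemble into the statement.
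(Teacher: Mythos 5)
Your proposal is correct and follows essentially the same route as the paper: the same one-to-one matching of each module identity with the condition that $U_i$ takes values in $M_i$, the same rank-one computations (with the adjoint landing on the conjugated vector, e.g. $xR=\overline{R^*\xi_1}\otimes\eta_2$), the same density argument to pass to general Hilbert--Schmidt operators, and the same appeal to the bicommutant theorem to convert ``range of $U_i$ commutes with $M_i'$'' into ``$U_i$ is valued in $M_i$''. No gaps; this is the paper's proof.
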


\begin{proof}
Let $R\in B(H_1)$. For any $\eta_1,\xi_1\in H_1$,
$\eta_2,\xi_2\in H_2$ and $\eta_3,\xi_3\in H_3$, we have
$$
\bigl\langle \bigl[u(\overline{\xi_2}\otimes\eta_3, \overline{\xi_1}\otimes\eta_2)\bigr]
R(\eta_1),\xi_3\bigr\rangle\,
=\,
\bigl\langle \bigl[U_1(\overline{\xi_2},\eta_2,\overline{\xi_3},\eta_3)\bigr]
R(\eta_1),\xi_1\bigr\rangle.
$$
Further $(\overline{\xi_1}\otimes\eta_2)R= \overline{R^*(\xi_1)}\otimes\eta_2$, hence
\begin{align*}
\bigl\langle \bigl[u(\overline{\xi_2}\otimes\eta_3, (\overline{\xi_1}\otimes\eta_2)R)\bigr]
(\eta_1),\xi_3\bigr\rangle\,
& =\,\bigl\langle \bigl[U_1(\overline{\xi_2},\eta_2,\overline{\xi_3},\eta_3)\bigr]
(\eta_1),R^*(\xi_1)\bigr\rangle\\
& =\,\bigl\langle R\bigl[U_1(\overline{\xi_2},\eta_2,\overline{\xi_3},\eta_3)\bigr]
(\eta_1),\xi_1\bigr\rangle.
\end{align*}

Since $\overline{H_1}\otimes H_2$ and $\overline{H_2}\otimes H_3$
are dense in $S^2(H_1,H_2)$ and
$S^2(H_2,H_3)$, respectively,
we deduce that $u(y,xR)=u(y,x)R$ for any $x\in S^2(H_1,H_2)$ and
any $y\in S^2(H_2,H_3)$ if and only if $R$ commutes with
$U_1(\overline{\xi_2},\eta_2,\overline{\xi_3},\eta_3)$ for
any $\xi_2,\eta_2\in H_2$ and $\xi_3,\eta_3\in H_3$.

Consequently
$u$ is $(\Cdb,\Cdb,M_1')$-modular if and only if
the range of $U_1$ commutes with $M_1'$. By the
Bicommutant Theorem, this means that
$u$ is $(\Cdb,\Cdb,M_1')$-modular if and only if
$U_1$ is valued in $M_1$.

Likewise $u$ is $(\Cdb,M_2',\Cdb)$-modular (resp.
$(M_3',\Cdb,\Cdb)$-modular)
if and only if
$U_2$ is valued in $M_2$ (resp. $U_3$ is valued in $M_3$).
This proves the result.
\end{proof}

\begin{corollary}\label{3Mod-B}
Let $\varphi\in B(H_1)\overline{\otimes} B(H_2)^{op}\overline{\otimes}
B(H_3)$. Then $\tau_\varphi$ is $(M_3',M_2',M_1')$-modular if and only
if $\varphi\in M_1\overline{\otimes} M_2^{op}\overline{\otimes}
M_3$. 
This provides (as a restriction of (\ref{2OM-B1}))
a $w^*$-continuous
completely isometric identification
$$
M_1\overline{\otimes} M_2^{op}\overline{\otimes} M_3
\,\simeq\, CB_{(M'_3,M'_2,M'_1)}
\bigl(\Gamma(H_1,H_2,H_3), B(H_1,H_3)\bigr),
$$
where the right-hand side denotes the subspace 
of $CB\bigl(\Gamma(H_1,H_2,H_3), B^1(H_1,H_3)\bigr)$
of all completely bounded maps $\widetilde{u}$ such that
$u$ is an $(M'_3,M'_2,M'_1)$-module map.
\end{corollary}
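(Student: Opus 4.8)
The plan is to obtain the statement by restricting the identification (\ref{2OM-B1}) of Proposition \ref{2OM-B} to the $w^*$-closed subalgebra $M_1\overline{\otimes} M_2^{op}\overline{\otimes} M_3$. Since $\tau$ is a $w^*$-continuous complete isometry onto $CB\bigl(\Gamma(H_1,H_2,H_3),B(H_1,H_3)\bigr)$ and the subalgebra is $w^*$-closed, its image is again $w^*$-closed and $\tau$ restricts to a $w^*$-continuous complete isometry onto it. Everything therefore reduces to the equivalence
$$
\varphi\in M_1\overline{\otimes} M_2^{op}\overline{\otimes} M_3
\iff \tau_\varphi\ \text{is}\ (M_3',M_2',M_1')\text{-modular}.
$$
For the forward implication it is enough, by (\ref{2Sigma2}), to treat elementary tensors $R\otimes S\otimes T$ with $R\in M_1$, $S\in M_2$, $T\in M_3$, for which $\tau_\varphi(y,x)=TySxR$ is plainly modular (each pair of factors from $M_i$ and $M_i'$ commutes); modularity then propagates to the $w^*$-closed span by the separate $w^*$-continuity of the module actions together with the $w^*$-continuity of $\tau$.

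For the reverse implication I would combine Lemma \ref{3LemMod} with the commutation theorem for von Neumann algebra tensor products. Lemma \ref{3LemMod} recasts modularity of $u=\tau_\varphi$ as the condition that each associated map $U_i$ be valued in $M_i$, $i=1,2,3$. The crucial point is that the $U_i$ are precisely the rank-one iterated slice maps of $\varphi$: using Remark \ref{2Rk} and the duality (\ref{2Rk1}), $U_1(\overline{\xi_2},\eta_2,\overline{\xi_3},\eta_3)\in B(H_1)$ is the slice of $\varphi$ by the functionals $\eta_2\otimes\overline{\xi_2}\in S^1(H_2)^{op}$ and $\overline{\xi_3}\otimes\eta_3\in S^1(H_3)$ on the last two legs, and similarly for $U_2$ and $U_3$ (the middle leg carrying the opposite algebra $M_2^{op}$ through Lemma \ref{1Opp}). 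Because every normal functional on $B(H_i)$ is a norm limit of finite-rank ones and each $M_i$ is norm closed, the statement that all three $U_i$ are valued in $M_i$ is equivalent to every slice of $\varphi$ along any two of the three legs landing in the remaining $M_i$.

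Finally, this slice condition is exactly the description of $M_1\overline{\otimes} M_2^{op}\overline{\otimes} M_3$ as a normal Fubini tensor product, which coincides with the spatial tensor product: a standard slice argument shows that if every slice of $\varphi$ on the last two legs lies in $M_1$, then $\varphi$ commutes with every $R\otimes 1\otimes 1$, $R\in M_1'$, and symmetrically for the other legs, whence $\varphi$ commutes with $M_1'\overline{\otimes}(M_2')^{op}\overline{\otimes} M_3'$ and so $\varphi\in M_1\overline{\otimes} M_2^{op}\overline{\otimes} M_3$ by the commutation theorem. I expect the main obstacle to be the bookkeeping in identifying $U_i$ with the correct slice: one must carefully track the reversal introduced by the opposite algebra $M_2^{op}$ and the permutation of tensor legs relating the ordering in (\ref{2Rk1}) to that used in the definition of the $U_i$, so that each $U_i$ is matched with the slice into the intended factor. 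With that identification in hand, the remaining steps are routine applications of the commutation theorem and of the $w^*$-density of elementary tensors.
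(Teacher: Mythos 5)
Your proposal is correct, and it shares the paper's overall skeleton: restrict the identification (\ref{2OM-B1}), use Lemma \ref{3LemMod} to recast modularity of $\tau_\varphi$ as the condition that each $U_i$ be valued in $M_i$, and use Remark \ref{2Rk} plus norm-density of finite-rank functionals in the trace class to identify that condition with the statement that every two-leg slice of $\varphi$ lands in the remaining $M_i$ (with the opposite-algebra bookkeeping on the middle leg, exactly as in the paper). The genuine difference is in the last step, where membership in $M_1\overline{\otimes}M_2^{op}\overline{\otimes}M_3$ must be extracted from the slice condition. The paper does this by operator-space duality: it establishes the predual identity (\ref{3perp}) and applies Lemma \ref{1Injective1} (a) (the identification $N\overline{\otimes}M\simeq CB(M_*,N)$) to conclude that $\varphi$ lies in the triple tensor product if and only if its three slice maps $\varphi^1,\varphi^2,\varphi^3$ are valued in $M_1$, $M_2^{op}$, $M_3$. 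You instead argue via commutants: the bimodule property of slice maps and the density of elementary tensor functionals show that the slice condition forces $\varphi$ to commute with $M_1'\otimes 1\otimes 1$, $1\otimes (M_2^{op})'\otimes 1$ and $1\otimes 1\otimes M_3'$, hence with $M_1'\overline{\otimes}(M_2^{op})'\overline{\otimes}M_3'$, and Tomita's commutation theorem then gives $\varphi\in M_1\overline{\otimes}M_2^{op}\overline{\otimes}M_3$. Both routes are sound and of comparable depth -- indeed Lemma \ref{1Injective1} (a) is classically derived from the commutation theorem via Tomiyama's slice-map theorem -- but the paper's version keeps the argument inside the duality machinery it has already assembled and yields the orthogonality relation (\ref{3perp}) as a reusable byproduct, whereas yours is shorter to state and more transparently von Neumann-algebraic. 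Your separate treatment of the forward implication (elementary tensors give modular maps, then $w^*$-density plus point-weak continuity of $\tau$) is also correct, though the paper obtains both directions at once from its chain of equivalences.
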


\begin{proof}
Consider the duality relation
$$
B(H_2)^{op}\overline{\otimes} B(H_3) = 
\bigl(S^1(H_2)^{op} \stackrel{\frown}{\otimes} S^1(H_3)\bigr)^*
$$
provided by (\ref{1S1S1}).
We claim that in the space $S^1(H_2)^{op} \stackrel{\frown}{\otimes} S^1(H_3)$, we have equality
\begin{equation}\label{3perp}
\bigl(M_{2}^{op}\overline{\otimes} M_3\bigr)_{\perp}
\,=\,\overline{(M_{2\perp}^{op}\otimes S^1(H_3) + S^1(H_2)^{op}\otimes M_{3\perp})}.
\end{equation}
Indeed let $z\in B(H_2)^{op}\overline{\otimes} B(H_3)$ and let
$z'\colon S^1(H_3)\to B(H_2)^{op}$ and
$z''\colon S^1(H_2)^{op} \to B(H_3)$ 
be associated with $z$ (see Lemma \ref{1Slice}).
Then $z\in \bigl(M_{2\perp}^{op}\otimes S^1(H_3)\bigr)^{\perp}$ if and only if
$z'$ is valued in $M_{2}^{op}$, whereas $z\in \bigl(S^1(H_2)^{op}\otimes M_{3\perp}\bigr)^{\perp}$ if and only if
$z''$ is valued in $M_{3}$. Consequently,
$z$ belongs to the orthogonal of $M_{2\perp}^{op}\otimes S^1(H_3) + S^1(H_2)^{op}\otimes M_{3\perp}$
if and only if $z'$ is valued in $M_{2}^{op}$
and $z''$ is valued in $M_{3}$. In turn this is equivalent to $z'\in CB(M_{3*}, M_{2}^{op})$.
Applying Lemma \ref{1Injective1} (a), we deduce that
the orthogonal of $M_{2\perp}^{op}\otimes S^1(H_3) + S^1(H_2)^{op}\otimes M_{3\perp}$
is equal to $M_{2}^{op}\overline{\otimes} M_3$. The claim (\ref{3perp})
follows at once.

Let $\varphi\in B(H_1)\overline{\otimes} B(H_2)^{op}\overline{\otimes}
B(H_3)$. Using Lemma \ref{1Slice}, we may associate 3 completely bounded operators
\begin{align*}
\varphi^1 &\colon S^1(H_2)^{op}\stackrel{\frown}{\otimes} S^1(H_3)\longrightarrow
B(H_1),\\
\varphi^2 &\colon S^1(H_1)\stackrel{\frown}{\otimes} S^1(H_3)\longrightarrow
B(H_2)^{op},\\
\varphi^3 &\colon S^1(H_1)\stackrel{\frown}{\otimes} S^1(H_2)^{op}\longrightarrow
B(H_3)
\end{align*}
to $\varphi$.
According to Lemma \ref{1Injective1} (a), $\varphi$ belongs
to $M_1\overline{\otimes} M_2^{op}\overline{\otimes}
M_3$ if and only if $\varphi^1$ is valued in $M_1$ and
$\varphi^1$ vanishes on $(M_2^{op}\overline{\otimes}
M_3)_{\perp}$. By (\ref{3perp}),
$\varphi^1$ vanishes on $(M_2^{op}\overline{\otimes}
M_3)_{\perp}$ if and only if it both vanishes on
$M_{2\perp}^{op}\otimes S^1(H_3)$ and $S^1(H_2)^{op}\otimes M_{3\perp}$.
A quick look at the definitions of $\varphi^1, \varphi^2,\varphi^3$
reveals that $\varphi^1$ vanishes on
$M_{2\perp}^{op}\otimes S^1(H_3)$ if and only if $\varphi^2$ is valued in
$M^{op}_{2}$ and that $\varphi^1$ vanishes on $S^1(H_2)^{op}\otimes M_{3\perp}$
if and only if $\varphi^3$ is valued in
$M_{3}$. Altogether we obtain that $\varphi$ belongs
to $M_1\overline{\otimes} M_2^{op}\overline{\otimes}
M_3$ if and only if  $\varphi^1$ is valued in $M_1$,
$\varphi^2$ is valued in $M_2^{op}$ and
$\varphi^3$ is valued in $M_3$.

Let $u=\tau_\varphi$.
It follows from Remark \ref{2Rk}
that for any $\eta_2,\xi_2\in H_2$ and $\eta_3,\xi_3\in H_3$, we have
$$
\varphi^1(\eta_2\otimes \overline{\xi_2}\otimes \overline{\xi_3}\otimes
\eta_3) = U_1(\overline{\xi_2},\eta_2,\overline{\xi_3},\eta_3),
$$
where $U_1$ is defined by (\ref{3U11}) and (\ref{3U12}).
Thus $\varphi^1$ is valued in
$M_1$ if and only if $U_1$ is valued in $M_1$. Likewise
$\varphi^2$ is valued in
$M_2^{op}$ if and only if $U_2$ is valued in $M_2$ and
$\varphi^3$ is valued in
$M_3$ if and only if $U_3$ is valued in $M_3$.
By Lemma \ref{3LemMod} we deduce that
$u$ is $(M_1',M_2',M_3')$-modular if and only
if $\varphi\in M_1\overline{\otimes} M_2^{op}\overline{\otimes} M_3$.
\end{proof}

We now turn to the study of modular completely bounded $S^1$-multipliers.
We let
$$
CB_{(M'_3,M'_2,M'_1)}\bigl(\Gamma(H_1,H_2,H_3), S^1(H_1,H_3)\bigr)
$$
denote the subspace of $CB\bigl(\Gamma(H_1,H_2,H_3), S^1(H_1,H_3)\bigr)$
of all completely bounded maps $\widetilde{u}$ such that
$u$ is an $(M'_3,M'_2,M'_1)$-module map.

According to (\ref{1Embed}) and  (\ref{1w*h}),
$M_1\stackrel{w^*h}{\otimes} M_3$ can be regarded as
a $w^*$-closed subspace of the dual operator space
$B(H_1)\stackrel{w^*h}{\otimes}B(H_3)$. Consequently,
$M_2^{op}\overline{\otimes}\bigl(M_1\stackrel{w^*h}{\otimes} M_3\bigr)$
can be regarded as
a $w^*$-closed subspace of the dual operator space
$B(H_2)^{op}\overline{\otimes}\bigl(B(H_1)\stackrel{w^*h}{\otimes}B(H_3)\bigr)$.
The next statement is a continuation of Theorem \ref{2OM-S1}.

\begin{theorem}\label{3OM-S1}
Assume that $M_2$ is injective.
\begin{itemize}
\item [(a)] Let $\varphi\in B(H_2)^{op}\overline{\otimes}\bigl(B(H_1)\stackrel{w^*h}{\otimes}B(H_3)\bigr)$.
Then $\varphi$ belongs to $M_2^{op}\overline{\otimes}\bigl(M_1\stackrel{w^*h}{\otimes} M_3\bigr)$
if and only if $\tau_\varphi$ is $(M'_3,M'_2,M'_1)$-modular.

\smallskip
\item [(b)]
The identification (\ref{2Ident3}) restricts to
\begin{equation}\label{3Ident1}
M_2^{op}\overline{\otimes}\bigl(
M_1\stackrel{w^*h}{\otimes}M_3\bigr)\,\simeq \,
CB_{(M'_3,M'_2,M'_1)}\bigl(\Gamma(H_1,H_2,H_3), S^1(H_1,H_3)\bigr).
\end{equation}
\end{itemize}
\end{theorem}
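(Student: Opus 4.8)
The plan is to deduce part (b) formally from part (a), and to prove part (a) by combining the description of $(M_3',M_2',M_1')$-modularity furnished by Corollary \ref{3Mod-B} with a slice-map analysis of membership in $M_2^{op}\overline{\otimes}(M_1\stackrel{w^*h}{\otimes}M_3)$, carried out one leg at a time. Throughout I view $\varphi\in B(H_2)^{op}\overline{\otimes}(B(H_1)\stackrel{w^*h}{\otimes}B(H_3))$ and, writing $B(H_2)^{op}=B(\overline{H_2})$, I use Lemma \ref{1Injective1} (c) together with the identity $B(H_1)\stackrel{w^*h}{\otimes}B(H_3)=(S^1(H_1)\stackrel{h}{\otimes}S^1(H_3))^*$ from (\ref{1w*h}) to associate with $\varphi$ a completely bounded map
\[
w_\varphi\colon S^1(H_1)\stackrel{h}{\otimes}S^1(H_3)\longrightarrow B(H_2)^{op}.
\]
Since the embedding of the $w^*h$-space into $B(H_1)\overline{\otimes}B(H_2)^{op}\overline{\otimes}B(H_3)$ is dual to the inclusion $S^1(H_1)\stackrel{\frown}{\otimes}S^1(H_3)\subset S^1(H_1)\stackrel{h}{\otimes}S^1(H_3)$, the restriction of $w_\varphi$ to $S^1(H_1)\stackrel{\frown}{\otimes}S^1(H_3)$ is exactly the slice map $\varphi^2$ from the proof of Corollary \ref{3Mod-B}; I keep the maps $\varphi^1$ and $\varphi^3$ from that proof as well. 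By Corollary \ref{3Mod-B}, $\tau_\varphi$ is $(M_3',M_2',M_1')$-modular iff $\varphi^1$ is valued in $M_1$, $\varphi^2$ in $M_2^{op}$, and $\varphi^3$ in $M_3$, so all of part (a) reduces to showing that these three ``leg conditions'' hold precisely when $\varphi\in M_2^{op}\overline{\otimes}(M_1\stackrel{w^*h}{\otimes}M_3)$.

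For the $M_2$-leg I would use injectivity. Since $M_2^{op}$ is injective, Lemma \ref{1Injective1} (b) applied to $E=S^1(H_1)\stackrel{h}{\otimes}S^1(H_3)$ gives a $w^*$-continuous completely isometric identification $M_2^{op}\overline{\otimes}(B(H_1)\stackrel{w^*h}{\otimes}B(H_3))\simeq CB(S^1(H_1)\stackrel{h}{\otimes}S^1(H_3),M_2^{op})$, sitting inside $CB(S^1(H_1)\stackrel{h}{\otimes}S^1(H_3),B(H_2)^{op})$ as the maps taking values in $M_2^{op}$. Hence $\varphi$ lands in $M_2^{op}\overline{\otimes}(B(H_1)\stackrel{w^*h}{\otimes}B(H_3))$ exactly when $w_\varphi$ is $M_2^{op}$-valued, and, by density of $S^1(H_1)\stackrel{\frown}{\otimes}S^1(H_3)$ and norm-closedness of $M_2^{op}$, this is equivalent to $\varphi^2$ being $M_2^{op}$-valued. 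This step is the sole place where injectivity of $M_2$ is essential: it is what collapses the normal Fubini product onto the normal spatial product in the second leg.

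For the inner $(M_1,M_3)$-legs I would feed the $w^*$-Haagerup slice lemma. Assuming $w_\varphi$ is $M_2^{op}$-valued, I test it against predual functionals: for $\omega_2\in M_{2*}^{op}$ put $\theta_{\omega_2}=\langle w_\varphi(\,\cdot\,),\omega_2\rangle\in(S^1(H_1)\stackrel{h}{\otimes}S^1(H_3))^*$. A direct slice-map computation (as in Remark \ref{2Rk}) identifies the maps of Lemma \ref{1Slice-H} as $\theta_{\omega_2}'(\,\cdot\,)=\varphi^1(\omega_2\otimes\,\cdot\,)$ and $\theta_{\omega_2}''(\,\cdot\,)=\varphi^3(\,\cdot\,\otimes\omega_2)$, valued in $B(H_1)$ and $B(H_3)$. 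By (\ref{1Embed}) and (\ref{1w*h}) we have $M_1\stackrel{w^*h}{\otimes}M_3=(M_{1*}\stackrel{h}{\otimes}M_{3*})^*$ sitting $w^*$-closedly in $(S^1(H_1)\stackrel{h}{\otimes}S^1(H_3))^*$, so Lemma \ref{1Slice-H} says $\theta_{\omega_2}\in(M_{1*}\stackrel{h}{\otimes}M_{3*})^*$ iff $\theta_{\omega_2}'$ is $M_1$-valued and $\theta_{\omega_2}''$ is $M_3$-valued. As $\omega_2$ ranges over $M_{2*}^{op}$ (which separates $M_2^{op}$), these hold for every $\omega_2$ exactly when $\varphi^1$ is $M_1$-valued and $\varphi^3$ is $M_3$-valued, and they amount to $w_\varphi$ vanishing on the kernel of the complete quotient map $S^1(H_1)\stackrel{h}{\otimes}S^1(H_3)\to M_{1*}\stackrel{h}{\otimes}M_{3*}$, i.e.\ to $w_\varphi\in CB(M_{1*}\stackrel{h}{\otimes}M_{3*},M_2^{op})\simeq M_2^{op}\overline{\otimes}(M_1\stackrel{w^*h}{\otimes}M_3)$. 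Combining the two legs yields the claimed equivalence, hence (a).

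Finally, part (b) is purely formal. By Theorem \ref{2OM-S1} the map $\tau$ furnishes the $w^*$-continuous completely isometric identification (\ref{2Ident3}); by part (a) it carries the $w^*$-closed subspace $M_2^{op}\overline{\otimes}(M_1\stackrel{w^*h}{\otimes}M_3)$ onto the set of those $\widetilde{u}$ whose $u$ is $(M_3',M_2',M_1')$-modular, that is, onto $CB_{(M_3',M_2',M_1')}(\Gamma(H_1,H_2,H_3),S^1(H_1,H_3))$. A $w^*$-homeomorphic complete isometry restricts on a $w^*$-closed subspace to a $w^*$-homeomorphic complete isometry onto its (automatically $w^*$-closed) image, which gives (\ref{3Ident1}). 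I expect the main obstacle to be the inner-leg step: correctly matching $\theta_{\omega_2}'$ and $\theta_{\omega_2}''$ with the slice maps $\varphi^1$ and $\varphi^3$ through the various canonical identifications (with their conjugations and opposite-algebra twists), and checking that scalarizing against $M_{2*}^{op}$ legitimately reduces the operator-valued factorization to the scalar-valued Lemma \ref{1Slice-H}.
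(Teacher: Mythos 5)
Your proposal is correct and follows essentially the same route as the paper: your $w_\varphi$ and the scalarized functionals $\theta_{\omega_2}$ are the paper's $\sigma$ and $\rho(v)$, and your two ``leg'' conditions are precisely the paper's conditions (\ref{2Fubini1}) and (\ref{2Fubini2}), established with the same tools (Lemma \ref{1Injective1} (b)--(c), Lemma \ref{1Slice-H}, Remark \ref{2Rk}, density of finite-rank tensors, and the formal restriction argument for part (b)). The only cosmetic difference is that you reach the modularity criterion through the slice maps $\varphi^1,\varphi^2,\varphi^3$ of Corollary \ref{3Mod-B} rather than directly through Lemma \ref{3LemMod} and the operators $U_i$, which the proof of that corollary shows to be the same thing.
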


\begin{proof}
Clearly (b) is a consequence of (a) so we only treat this first item.

Let $\varphi\in B(H_2)^{op}\overline{\otimes}\bigl(B(H_1)\stackrel{w^*h}{\otimes}B(H_3)\bigr)$.
Let
$$
\sigma\colon S^1(H_1)\stackrel{h}{\otimes}S^1(H_3)\longrightarrow  B(H_2)^{op}
$$
be corresponding to $\varphi$ in the identification provided by Lemma \ref{1Injective1} (c).
Then let
$$
\rho\colon S^1(H_2)^{op} \longrightarrow B(H_1)\stackrel{w^*h}{\otimes} B(H_3)
$$
be the restriction of the adjoint of $\sigma$
to $S^1(H_2)^{op}$.

We assumed that $M_2$ is injective. It therefore
follows from Lemma \ref{1Injective1} (b) that
$\varphi\in M_2^{op}\overline{\otimes}\bigl(M_1\stackrel{w^*h}{\otimes} M_3\bigr)$
if and only if
\begin{equation}\label{2Fubini1}
\sigma\bigl(S^1(H_1)\stackrel{h}{\otimes}S^1(H_3)\bigr)
\,\subset\, M_2^{op}
\end{equation}
and
\begin{equation}\label{2Fubini2}
\rho\bigl(S^1(H_2)^{op}\bigr)\,\subset\,M_1\stackrel{w^*h}{\otimes} M_3.
\end{equation}

Let  $u=\tau_\varphi$.
We will now show that $u$ is an $(M'_3,M'_2,M'_1)$-module map if and only if
(\ref{2Fubini1}) and (\ref{2Fubini2}) hold true.

First we observe that for any $\xi_1,\eta_1\in H_1$ and $\xi_3,\eta_3\in H_3$,
$$
\sigma\bigl((\overline{\xi_1}\otimes\eta_1)\otimes(\overline{\xi_3}\otimes\eta_3)\bigr)
\,=\, U_2(\overline{\xi_1},\eta_1,\overline{\xi_3},\eta_3).
$$
Indeed, this follows from Remark \ref{2Rk} and the definition of $U_2$.
Since
$\overline{H_1}\otimes H_1$ and $\overline{H_3}\otimes H_3$
are dense in $S^1(H_1)$ and $S^1(H_3)$, respectively,
we deduce that (\ref{2Fubini1}) holds true if and only if
$U_2$ is valued in $M_2$.

For any $v\in S^1(H_2)^{op}$, we may regard $\rho(v)$ as
an element of  $\bigl(S^1(H_1)\stackrel{h}{\otimes} S^1(H_3)\bigr)^*$.
Then following the notation in Lemma \ref{1Slice-H}, we let
$$
[\rho(v)]'\colon S^1(H_3)\longrightarrow B(H_1)
\qquad\hbox{and}\qquad
[\rho(v)]''\colon S^1(H_1)\longrightarrow B(H_3)
$$
be the bounded linear maps associated to $\rho(v)$.

For any $\xi_2,\eta_2\in H_2$ and $\xi_3,\eta_3\in H_3$, we have
$$
\bigl[\rho(\eta_2\otimes\overline{\xi_2})\bigr]'(\overline{\xi_3}\otimes\eta_3) =
U_1(\overline{\xi_2},\eta_2, \overline{\xi_3},\eta_3).
$$
Indeed this follows again from Remark \ref{2Rk}.
Since
$H_2\otimes \overline{H_2}$ and $\overline{H_3}\otimes H_3$
are dense in $S^1(H_2)^{op}$ and $S^1(H_3)$, respectively,
we deduce that $[\rho(v)]'$ maps
$S^1(H_3)$ into $M_1$ for any  $v\in S^1(H_2)^{op}$ if and only
if $U_1$ is valued in $M_1$. Likewise, $[\rho(v)]''$ maps
$S^1(H_1)$ into $M_3$ for any  $v\in S^1(H_2)^{op}$ if and only
if $U_3$ is valued in $M_3$. Applying Lemma \ref{1Slice-H}, we
deduce  (\ref{2Fubini2}) holds true if and only if
$U_1$ is valued in $M_1$ and $U_3$ is valued in $M_3$.

Altogether we have that (\ref{2Fubini1}) and (\ref{2Fubini2}) both
hold true if and only if for any $i=1,2,3$,
$U_i$ is valued in $M_i$. According to Lemma \ref{3LemMod}, this is equivalent to
$u=\tau_\varphi$ being
$(M'_3,M'_2,M'_1)$-modular.	
\end{proof}

\section{The Sinclair-Smith factorization theorem}\label{5SS}

Let $I$ be an index set, and consider the Hilbertian operator spaces
$$
C_I = \{\ell^2_I\}_c
\qquad\hbox{and}\qquad
R_I = \{\ell^2_I\}_r.
$$
For any operator space $G$, we set
$$
C_I^w(G^*) = C_I\overline{\otimes} G^*
\qquad\hbox{and}\qquad
R_I^w(G^*) = R_I\overline{\otimes} G^*.
$$
This notation is taken from \cite[1.2.26--1.2.29]{BLM}, to which we refer for more
information.

We recall that $C_I^w(G^*)$ can be equivalently defined as the space of all families
$(x_i)_{i\in I}$ of elements of $G^*$ such that
the sums $\sum_{i\in J} x^*_ix_i$, for finite $J\subset I$, are uniformly bounded.
Likewise, $R_I^w(G^*)$ is equal to the space of all families
$(y_i)_{i\in I}$ of elements of $G^*$ such that
the sums $\sum_{i\in J} y_iy_i^*$, for finite $J\subset I$, are uniformly bounded.

Assume that $G^*=M$ is a von Neumann algebra, and consider $(x_i)_{i\in I}\in
C_I^w(M)$ and $(y_i)_{i\in I}\in
R_I^w(M)$. Then the family $(y_i x_i)_{i\in I}$ is summable in the $w^*$-topology of
$M$ and we let
\begin{equation}\label{4Sum}
\sum_{i\in I} y_i x_i\ \in M
\end{equation}
denote its sum.

We note the obvious fact that for any $x_i\in M, i\in I$,
$(x_i)_{i\in I}$ belongs to
$R_I^w(M)$ if and only if $(x_i^*)_{i\in I}$ belongs to
$C_I^w(M)$. In this case we set
$$
\bigl[(x_i)_{i\in I}\bigr]^* \,=\, (x_i^*)_{i\in I}.
$$

\begin{lemma}\label{4CI-CB} Let $E,G$ be operator spaces and let $I$ be an index set.
For any $\alpha=(\alpha_i)_{i\in I}\in C_I^w\bigl(CB(E,G^*)\bigr)$,
the (well-defined) operator $\widehat{\alpha}\colon E\to C_I^w(G^*)$,
$\widehat{\alpha}(x) =(\alpha_i(x))_{i\in I}$,
is completely bounded
and the mapping $\alpha\mapsto \widehat{\alpha}$ induces a
$w^*$-continuous completely isometric identification
$$
C_I^w\bigl(CB(E,G^*)\bigr)\,\simeq\, CB\bigl(E, C_I^w(G^*)\bigr).
$$
Likewise we have
$$
R_I^w\bigl(CB(E,G^*)\bigr)\,\simeq\, CB\bigl(E, R_I^w(G^*)\bigr).
$$
\end{lemma}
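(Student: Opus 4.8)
The plan is to reduce everything to the duality identities of Section \ref{20S} by realizing $C_I$ as a space of bounded operators between Hilbert spaces, so that Lemma \ref{1Injective1}(c) becomes applicable to it. Using the identification (\ref{1Compact}) with a one-dimensional Hilbert space, one checks that $C_I=\{\ell^2_I\}_c$ is, completely isometrically and $w^*$-homeomorphically, the dual operator space $B(\Cdb,\ell^2_I)$; symmetrically $R_I=\{\ell^2_I\}_r\simeq B(\overline{\ell^2_I},\Cdb)$. Consequently Lemma \ref{1Injective1}(c) yields, for every operator space $X$, a $w^*$-continuous completely isometric identification $C_I\overline{\otimes}X^*\simeq CB(X,C_I)$.

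With this in hand I would run two chains of identifications and observe that they land on the same space. Since $CB(E,G^*)\simeq(G\stackrel{\frown}{\otimes}E)^*$ by (\ref{1Duality4}), and the normal spatial tensor product is insensitive to replacing a dual operator space by a completely isometric $w^*$-homeomorphic copy, the left-hand side is $C_I^w(CB(E,G^*))=C_I\overline{\otimes}(G\stackrel{\frown}{\otimes}E)^*$, whence applying the identification just obtained with $X=G\stackrel{\frown}{\otimes}E$ gives
\[
C_I^w\bigl(CB(E,G^*)\bigr)\,\simeq\,CB\bigl(G\stackrel{\frown}{\otimes}E,\,C_I\bigr).
\]
On the other side, applying the same identification to $C_I^w(G^*)=C_I\overline{\otimes}G^*\simeq CB(G,C_I)$ and then adjoint-associativity of the operator space projective tensor product, I get
\[
CB\bigl(E,\,C_I^w(G^*)\bigr)\,\simeq\,CB\bigl(E,CB(G,C_I)\bigr)\,\simeq\,CB\bigl(E\stackrel{\frown}{\otimes}G,\,C_I\bigr)\,\simeq\,CB\bigl(G\stackrel{\frown}{\otimes}E,\,C_I\bigr),
\]
the last step by commutativity of $\stackrel{\frown}{\otimes}$. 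The adjoint-associativity identity $CB(E\stackrel{\frown}{\otimes}G,X)\simeq CB(E,CB(G,X))$ used here is itself a consequence of (\ref{1Duality4}) and the associativity of $\stackrel{\frown}{\otimes}$ whenever $X$ is a dual operator space (write $CB(E\stackrel{\frown}{\otimes}G,X_0^*)=((E\stackrel{\frown}{\otimes}G)\stackrel{\frown}{\otimes}X_0)^*$ and regroup), and $C_I$ is such a space. Composing the two chains produces a $w^*$-continuous completely isometric isomorphism $C_I^w(CB(E,G^*))\simeq CB(E,C_I^w(G^*))$, every constituent identification being $w^*$-continuous and completely isometric.

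It remains to verify that this abstract isomorphism is implemented by the concrete formula $\alpha=(\alpha_i)_i\mapsto\widehat{\alpha}$, where $\widehat{\alpha}(x)=(\alpha_i(x))_i$, and I expect this bookkeeping to be the only delicate point. I would carry it out exactly as in the proofs of Proposition \ref{2OM-B} and Theorem \ref{2OM-S1}: by $w^*$-continuity and linearity it suffices to evaluate the composite on elementary tensors $e_i\otimes\phi$ (with $\phi\in CB(E,G^*)$ and $e_i$ the $i$-th basis vector of $C_I$) and to pair the outcome against elementary elements of the relevant predual, tracking each identification through the slice maps of Lemma \ref{1Slice} and the evaluation in (\ref{1Compact}). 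This shows that the image of $e_i\otimes\phi$ has $i$-th coordinate $\phi$ and all others zero, i.e. that the composite is $\alpha\mapsto\widehat{\alpha}$; complete boundedness of $\widehat{\alpha}$ and the norm equality then follow from the completely isometric character of the chain.

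Finally, the row-space statement $R_I^w(CB(E,G^*))\simeq CB(E,R_I^w(G^*))$ follows by the identical argument, using $R_I\simeq B(\overline{\ell^2_I},\Cdb)$ in place of $C_I\simeq B(\Cdb,\ell^2_I)$ so that Lemma \ref{1Injective1}(c) again applies.
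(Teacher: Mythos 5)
Your proposal is correct and follows essentially the same route as the paper: both proofs rest on Lemma \ref{1Injective1}(c) applied to $C_I\simeq B(\Cdb,\ell^2_I)$ (resp. $R_I$), the duality (\ref{1Duality4}), and commutativity/associativity of $\stackrel{\frown}{\otimes}$, followed by a routine check that the resulting identification is implemented by $\alpha\mapsto\widehat{\alpha}$. The only cosmetic difference is that the paper funnels both sides through the common predual realization $(R_I\stackrel{\frown}{\otimes}E\stackrel{\frown}{\otimes}G)^*$, whereas you land both chains in $CB(G\stackrel{\frown}{\otimes}E,C_I)$ via adjoint-associativity, which is the same identification in different clothing.
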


\begin{proof}
According to Lemma \ref{1Injective1} (c) and (\ref{1Duality4}),
$C_I^w(Z^*)\simeq (R_I\stackrel{\frown}{\otimes} Z)^*$ for any
operator space $Z$. Applying this identification, first with $Z=E\stackrel{\frown}{\otimes} G$
and then with
$Z=G$, we obtain that
\begin{align*}
C_I^w\bigl(CB(E,G^*)\bigr)\,
&\simeq\,
C_I^w\bigl((E\stackrel{\frown}{\otimes} G)^*\bigr)\quad\hbox{by (\ref{1Duality4})}\\
&\simeq\,
(R_I\stackrel{\frown}{\otimes} E\stackrel{\frown}{\otimes} G)^*\\
&\simeq\, CB\bigl(E, (R_I\stackrel{\frown}{\otimes} G)^*\bigr)\quad\hbox{by (\ref{1Duality4})}\\
&\simeq\, CB\bigl(E, C_I^w(G^*)\bigr).
\end{align*}
A straightforward verification reveals that this identification is implemented
by $\widehat{\alpha}$.
This yields the first part of the lemma. The proof of the second part is identical.
\end{proof}

We can now state the Sinclair-Smith factorization theorem, which will be use
in the next section.

\begin{theorem}\label{4SS} (\cite{SS})
Let $E,F$ be operator spaces, let $M$ be an injective von Neumann algebra
and let $w\colon F\stackrel{h}{\otimes} E\to M$ be a completely bounded
map. Then there exist an index set $I$ and two families
$$
\alpha=(\alpha_i)_{i\in I}\in C_I^w\bigl(CB(E,M)\bigr)
\qquad\hbox{and}\qquad
\beta=(\beta_i)_{i\in I}\in R_I^w\bigl(CB(F,M)\bigr)
$$
such that $\cbnorm{\alpha}\cbnorm{\beta} = \cbnorm{w}$ and
$$
w(y\otimes x)\,=\,\sum_{i\in I} \beta_i(y)\alpha_i(x),
\qquad x\in E,\, y\in F.
$$
\end{theorem}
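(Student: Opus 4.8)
The plan is to use the injectivity of $M$ to realize $w$ as an $M$-valued inner product of two completely bounded maps taking values in a self-dual Hilbert W*-module, and then to coordinatize that module inside $C_I^w(M)$, so that Lemma \ref{4CI-CB} produces the required families. First I would fix a normal faithful representation $M\subseteq B(L)$ together with a conditional expectation $\mathcal{E}\colon B(L)\to M$; such an $\mathcal{E}$ exists precisely because $M$ is injective, and it is completely positive and completely contractive. Composing $w$ with the inclusion $M\subseteq B(L)$ and invoking the representation theorem for completely bounded maps on the Haagerup tensor product (see e.g. \cite[Chapter 9]{ER}), I obtain a Hilbert space $\mathcal{K}$ and complete contractions $c\colon E\to B(L,\mathcal{K})$ and $r\colon F\to B(\mathcal{K},L)$ with $w(y\otimes x)=r(y)c(x)$ for all $x\in E$, $y\in F$, and with $\cbnorm{c}\,\cbnorm{r}=\cbnorm{w}$.

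Next I would turn $B(L,\mathcal{K})$ into a right pre-Hilbert $M$-module by setting $\langle S,T\rangle_M=\mathcal{E}(S^*T)$; positivity and $M$-linearity of this form follow from positivity of $\mathcal{E}$ and from its being an $M$-bimodule map. Let $\mathcal{N}$ denote the associated self-dual Hilbert W*-module over $M$ (the separation-completion of $B(L,\mathcal{K})$ for this inner product). Writing $d(y)=r(y)^*\in B(L,\mathcal{K})$ and using that $w$ already takes values in $M$, so that $w(y\otimes x)=\mathcal{E}(r(y)c(x))=\langle d(y),c(x)\rangle_M$, the map $w$ is exhibited as the $M$-valued inner product of the two maps $c$ and $d$ into $\mathcal{N}$. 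Both are completely contractive into $\mathcal{N}$, since the identity $B(L,\mathcal{K})\to\mathcal{N}$ is a complete contraction for the column operator space structure of the module.

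The crux of the argument is the structure theory of self-dual Hilbert W*-modules (Paschke): $\mathcal{N}$ is orthogonally complemented in a free self-dual module, which yields an adjointable, inner-product preserving, hence completely isometric, module embedding $V\colon\mathcal{N}\to C_I^w(M)$ for a suitable index set $I$. Putting $\alpha_i(x)=(Vc(x))_i$ and $\beta_i(y)=\big((Vd(y))_i\big)^*$, the inner-product identity becomes
$$
w(y\otimes x)=\langle Vd(y),Vc(x)\rangle_{C_I^w(M)}=\sum_{i\in I}\beta_i(y)\alpha_i(x).
$$
Here $\widehat{\alpha}=V\circ c\colon E\to C_I^w(M)$ is completely bounded with $\cbnorm{\widehat{\alpha}}\leq\cbnorm{c}$, and, using the complete isometry $\overline{C_I^w(M)}\simeq R_I^w(M)$ together with complete boundedness of $r$, the map $\widehat{\beta}\colon F\to R_I^w(M)$, $y\mapsto(\beta_i(y))_i$, is completely bounded with $\cbnorm{\widehat{\beta}}\leq\cbnorm{r}$. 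Lemma \ref{4CI-CB} then gives $\alpha\in C_I^w(CB(E,M))$ and $\beta\in R_I^w(CB(F,M))$ with $\cbnorm{\alpha}=\cbnorm{\widehat{\alpha}}$ and $\cbnorm{\beta}=\cbnorm{\widehat{\beta}}$.

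For the norm equality, the construction yields $\cbnorm{\alpha}\,\cbnorm{\beta}\leq\cbnorm{c}\,\cbnorm{r}=\cbnorm{w}$, while the reverse inequality $\cbnorm{w}\leq\cbnorm{\alpha}\,\cbnorm{\beta}$ holds for any factorization of the displayed form and is the routine direction (it follows from the module Cauchy--Schwarz inequality, equivalently from the fact that the pairing $R_I^w(M)\stackrel{h}{\otimes}C_I^w(M)\to M$ is a complete contraction); hence $\cbnorm{\alpha}\,\cbnorm{\beta}=\cbnorm{w}$. The main obstacle I anticipate is the interplay between the $M$-module structures and the operator space structures: one must check that the passage $B(L,\mathcal{K})\to\mathcal{N}$ and Paschke's embedding $V$ are completely contractive, respectively completely isometric, and that the adjoint bookkeeping converting the column map $Vd$ into the row map $\widehat{\beta}$ is compatible with complete boundedness. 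The remaining steps are an assembly of standard facts.
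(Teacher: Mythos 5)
Your proposal is correct and follows essentially the same route as the paper's proof: Christensen--Sinclair factorization, a completely positive projection $\Psi\colon B(K)\to M$ from injectivity, the $M$-valued inner product $\langle S,T\rangle=\Psi(S^*T)$ on $B(K,\H)$, Paschke's self-dual module theory to embed the resulting module into $C_I^w(M)$, and then Lemma \ref{4CI-CB} plus the standard converse inequality. The only imprecision is that the separation-completion of $B(L,\mathcal{K})$ is a Hilbert $C^*$-module but not automatically self-dual; as in the paper, one must pass to its Paschke dual (the self-dual completion) via the inner-product-preserving map $\kappa$ before invoking the structure theorem, which does not affect the rest of your argument.
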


In the rest of this section, we give a
new (shorter) proof of Theorem \ref{4SS}
based on Hilbert $C^*$-modules.

In the following we give the necessary background on
Hilbert $C^*$-modules. Let $M$ be a $C^*$-algebra. Recall that a
pre-Hilbert $M$-module is a right $M$-module $\X$ equipped with a map
$\langle\,\cdotp,\,\cdotp\rangle\colon\X\times \X\to M$ (called an $M$-valued
inner product) satisfying the following properties:
\begin{itemize}
\item $\langle s,s\rangle\geq 0$ for every $s\in \X$;
\item $\langle s,s\rangle=0$ if and only if $s=0$;
\item $\langle s,t\rangle=\langle t,s\rangle^*$  for every $s,t\in \X$;
\item $\langle s, t_1m_1+t_2m_2\rangle=\langle s,t_1\rangle m_1+\langle s,t_2\rangle m_2$
for every $s,\,t_1,\,t_2\in \X$ and $m_1,\, m_2\in M$.
\end{itemize}
In this setting, the map
$\|\cdot\|\colon\X\to\mathbb R^+$, defined by
$$
\|s\|=\|\langle s,s\rangle\|^{1/2}, \qquad s\in \X,
$$
is a norm on $\X$. A pre-Hilbert $M$-module which is complete
with respect to its norm is said to be a Hilbert $M$-module.

By \cite{B} (see also \cite[8.2.1]{BLM}),
a Hilbert $M$-module $\X$ has a canonical operator
space structure obtained by letting, for any $n\geq 1$,
$$
\bignorm{(s_{ij})_{i,j}} \, =\, \Biggnorm{\Biggl(\sum_{k=1}^n\langle s_{ki}, s_{kj}
\rangle\Biggr)_{i,j}}_{M_n(M)}^{1/2}, \qquad (s_{ij})_{i,j}\in M_n(\X).
$$

A morphism between two Hilbert $M$-modules $\X_1$ and $\X_2$
is a bounded $M$-module map $u\colon \X_1\to \X_2$. A unitary
isomorphism $u\colon \X_1\to \X_2$ is an isomorphism preserving
the $M$-valued inner products. Any such map is a complete isometry
(see e.g. \cite[Proposition 8.2.2]{BLM}).

Assume now that $M$ is a von Neumann algebra. As a basic
example, we recall that whenever $p\in M$ is a projection,
then the subspace $pM$ of $M$ is a Hilbert $M$-module, when equipped with
multiplication on the right as the $M$-module action, and
with the $M$-valued inner product $\langle x,y\rangle = x^*y$, for $x,y\in pM$.

We recall the construction of the ultraweak direct sum Hilbert $M$-module.
Let $I$ be an index set and let $\{\X_i\, :\, i\in I\}$
be a collection of Hilbert $M$-modules indexed by $I$. We let
$\langle\,\cdotp,\,\cdotp\rangle_i$ denote the $M$-valued inner product of $\X_i$,
for any $i\in I$. Let $\X$ be the set of all families $s=(s_i)_{i\in I}$,
with $s_i\in\X_i$, such that
the sums $\sum_{i\in J}\langle s_i,s_i\rangle_i$, for finite $J\subset I$, are uniformly bounded.
Since $\langle s_i,s_i\rangle_i\geq 0$ for each $i\in I$,
the family $(\langle s_i,s_i\rangle_i)_{i\in I}$ is
then summable in the $w^*$-topology of
$M$. Using polarization identity, it is easy to deduce that for any
$s=(s_i)_{i\in I}$ and any $t=(t_i)_{i\in I}$ in $\X$,
the family $(\langle s_i,t_i\rangle_i)_{i\in I}$ is
summable in the $w^*$-topology of $M$. Then one defines
$$
\langle s,t\rangle\,=\, \sum_{i\in I} \langle s_i,t_i\rangle_i.
$$
It turns out that $\X$ is a right $M$-module
for the action $(s_i)_{i\in I}\cdot m = (s_i m)_{i\in I}$, and that
equipped with $\langle \,\cdotp,\,\cdotp\rangle$,
$\X$ is a Hilbert $M$-module. The latter is called the ultraweak direct sum
of $\{\X_i\, :\, i\in I\}$  and it is denoted by
$$
\X\,=\,\oplus_{i\in I} \X_i.
$$
See e.g. \cite[8.5.26]{BLM} for more on this construction.

Let $I$ be an index set, consider $C_I^w(M)$ as a right $M$-module is the obvious way. For any
$(s_i)_{i\in I}$ and $(t_i)_{i\in I}$ in $C_I^w(M)$
set
$$
\langle (s_i)_{i\in I}, (t_i)_{i\in I}\rangle=\sum_{i\in I} s_i^* t_i\,,
$$
where this sum is defined by (\ref{4Sum}). This is an
$M$-valued inner product, which makes $C_I^w(M)$ a Hilbert $M$-module. Moreover the canonical
operator space structure of $C_I^w(M)$ as a Hilbert $M$-module coincides with
the one given by writing $C_I^w(M)=C_I\overline{\otimes} M$, see
\cite[8.2.3]{BLM}. Further we clearly have
$$
C_I^w(M)\,\simeq\,\oplus_{i\in I} M\qquad \hbox{as Hilbert } M\hbox{-modules}.
$$

\begin{proof}[Proof of Theorem \ref{4SS}]
Assume that $M\subset B(K)$ for some Hilbert space $K$.
Let $w\colon F\stackrel{h}{\otimes} E\to M$ be a completely bounded
map. By the Christensen-Sinclair factorization theorem
(see e.g. \cite[Theorem 9.4.4]{ER}),
there exist a Hilbert space $\H$ and two completely bounded maps
$$
a\colon E\to B(K,\H)
\qquad\hbox{and}\qquad
b\colon F\to  B(\H,K)
$$
such that $\cbnorm{a}\cbnorm{b}=\cbnorm{w}$
and $w(y\otimes x)=b(y)a(x)$ for any $x\in E$ and any $y\in F$.

Since $M$ is injective, there exists a unital completely positive projection
$$
\Psi \colon B(K)\longrightarrow M.
$$
As $\Psi$ is valued in $M$, we then
have
\begin{equation}\label{4Facto1}
w(y\otimes x)=\Psi\bigl(b(y)a(x)\bigr),
\qquad x\in E, \ y\in F.
\end{equation}

We introduce
$$
C\,=\,\bigl\{T\in B(K,\H)\,:\,  \Psi(T^*T)=0\bigr\}.
$$
For any $k\in K$, $(T,S)\mapsto\langle\Psi(T^*S)k,k\rangle$
is a nonnegative sesquilinear form
on $B(K,\H)$, which vanishes on $\{(T,T)\, :\, T\in C\}$. This implies
(by the Cauchy-Schwarz inequality) that $\langle\Psi(T^*S )k,k\rangle=0$
for any $T\in C$ and any $S\in B(K,\H)$. Consequently,
$$
C\,=\, \bigl\{T\in  B(K,\H)\, :\,\Psi(T^*S)=0 \text{ for any } S\in B(K,\H)\bigr\}.
$$
In particular $C$ is a subspace of $B(K,\H)$.
Moreover  $\Psi$ is an $M$-bimodule map by \cite{To}, hence
$$
\Psi((Tm)^*(Tm))=\Psi(m^*T^*Tm)=m^*\Psi(T^*T)m,\qquad m\in M,\, T\in B(K,\H).
$$
Consequently, $C$ is invariant under right multiplication by elements of $M$.

Let $N=B(K,\H)/C$ and let $q\colon B(K,\H)\to N$
be the quotient map. The $M$-invariance of $C$ allows to
define a right $M$-module action on $N$ by
$$
q(T)\cdot m = q(Tm),\qquad m\in M,\ T\in B(K,\H).
$$
For any $S,\, T\in B(K,\H)$, set
$$
\langle q(T),q(S)\rangle_{N}=\Psi(T^*S).
$$
Then $\langle\,\cdotp,\,\cdotp\rangle_{N}$ is a well-defined, $M$-valued inner product on $N$,
and hence $N$ is a pre-Hilbert $M$-module.
For convenience, we keep the notation $N$ to denote its completion, which is a Hilbert $M$-module.
The factorization property
(\ref{4Facto1}) can now be rephrased as
\begin{equation}\label{4Facto2}
w(y\otimes x)=\bigl\langle q(b(y)^*),q(a(x))\bigr\rangle_N,
\qquad x\in E, \ y\in F.
\end{equation}

Recall from Paschke's fundamental paper \cite{Pa}
that the dual of $N$ (in the Hilbert $M$-module sense)
is the space
$$
N' = \bigl\{\phi \colon   N \to   M \, : \, \phi \mbox{ is a bounded } M\mbox{-module map}\bigr\}.
$$
Equip $N'$ with the linear structure obtained with usual addition of maps and scalar
multiplication given by $(\lambda\cdot\phi)(t) =\overline{\lambda}\phi(t)$  for any
$\phi\in N'$, $\lambda\in\Cdb$, and $t\in N$.
Then $N'$ is a right $M$-module for the action
given by
$$
(\phi\cdot m)(t) = m^*\phi(t),
\qquad \phi\in N',\  m\in M,\ t\in N.
$$
Let $\kappa\colon N\to N'$ be defined by
$\kappa(s)\colon t\in N \mapsto \langle s,t\rangle\in M$.
Then $\kappa$ is a linear map. By \cite[Theorem 3.2]{Pa},
there exists
an $M$-valued inner product $\langle\,\cdotp ,
\,\cdotp\rangle_{N'}$ on $N'$ such
that
\begin{equation}\label{4Kappa1}
\langle \kappa(s),\kappa(t)\rangle_{N'}\, =\, \langle s,t\rangle_N,
\qquad s,t\in N,
\end{equation}
and such that $N'$ is selfdual (see \cite[Section 3]{Pa}
for the definition). Then
by \cite[Theorem 3.12]{Pa}, $N'$ is unitarily
isomorphic to an ultraweak  direct sum  $\displaystyle{\oplus_{i\in I}} p_i M$,
where $(p_i)_{i\in I}$ is a family of non-zero projections in $M$.
Summarizing, we then have
\begin{equation}\label{4Kappa2}
N\,\stackrel{\kappa}{\hookrightarrow}\,
N'\,\simeq\, \oplus_i p_i M \,\subset\, \oplus_i M  \,\simeq C_I^w(M).
\end{equation}
Note that by (\ref{4Kappa1}), $\kappa$ is a complete isometry.

We claim that the quotient map
$q \colon B(K,\H)\to  N$ is completely contractive, when $N$
is equipped with its Hilbert $M$-module operator space structure.
Indeed, $\Psi$ is completely contractive
hence, for any $(S_{ij})_{i,j}\in M_n(B(K,\H))$, we have
\begin{align*}
\bignorm{\bigl(q(S_{ij})\bigr)_{i,j}}^2_{M_n(N)}\,
& =\,
\Biggnorm{\left(\sum_{k=1}^n \Psi(S_{ki}^* S_{kj}) \right)_{i,j}}_{M_n(M)}\\
& \leq \,
\Biggnorm{\left(\sum_{k=1}^n S_{ki}^* S_{kj}\right)_{i,j}}_{M_n(B(K))}\\
& = \,
\bignorm{\bigl((S_{ij})_{i,j}^*(S_{ij})_{i,j}\bigr)}_{M_n(B(K))}\\
&=\, \bignorm{(S_{ij})_{i,j}}^2_{M_n(B(K,\tiny{\H}))}.
\end{align*}

Using (\ref{4Kappa2}), we define $\alpha\colon E\to C_I^w(M)$
by $\alpha(x)=\kappa\bigl(q(a(x))\bigr)$. It follows from above that
$\alpha$ is completely bounded, with $\cbnorm{\alpha}\leq\cbnorm{a}$.
Likewise we  define $\beta\colon F\to R_I^w(M)$
by $\beta(y)=\bigl[\kappa\bigl(q(b(y)^*)\bigr)]^*$. Then
$\beta$ is completely bounded, with $\cbnorm{\beta}\leq\cbnorm{b}$.
Consequently, $\cbnorm{\alpha}\cbnorm{\beta} \leq \cbnorm{w}$.

In accordance with Lemma \ref{4CI-CB}, let
$(\alpha_i)_{i\in I}\in C^w_I\bigl(CB(E,M)\bigr)$
and $(\beta_i)_{i\in I}\in R^w_I\bigl(CB(F,M)\bigr)$ be corresponding
to $\alpha$ and $\beta$, respectively.
Then by (\ref{4Facto2}) and (\ref{4Kappa2}), we have
$$
w(y\otimes x)\,=\,\langle\beta(y)^*,\alpha(x)\rangle_{N'}
\,=\, \bigl\langle (\beta_i(y)^*)_{i\in I},(\alpha_i(x))_{i\in I}
\bigr\rangle_{C_I^w(M)}\,
=\,
\sum_{i\in I} \beta_i(y)\alpha_i(x)
$$
for any $x\in E$ and $y\in F$.

Once this identity is established, the inequality $\cbnorm{w}\leq
\cbnorm{\alpha}\cbnorm{\beta}$ is a classical fact.
\end{proof}

\section{Factorization of modular operators}\label{6Facto}

Consider $H_1,H_2,H_3$ and $M_1,M_2,M_3$, $M_i\subset B(H_i)$, as in Sections \ref{3OM} and \ref{4MOD}.

Using the Hilbert space identification
$S^2(H_1,H_2) \simeq\overline{H_1}\stackrel{2}{\otimes} H_2$, Lemma
\ref{1Opp} and (\ref{1Normal}), we have
von Neumann algebra identifications
$$
B(H_1)\overline{\otimes} B(H_2)^{op}\simeq B(H_1)\overline{\otimes} B(\overline{H_2})
\simeq B(H_1\stackrel{2}{\otimes}\overline{H_2})\simeq B(S^2(H_1,H_2))^{op}
$$
and hence a von Neumann algebra embedding
$$
\tau^1\colon M_1\overline{\otimes} M_2^{op}\,\hookrightarrow\,
B(S^2(H_1,H_2))^{op}.
$$
Unraveling the above identifications, we see that
\begin{equation}\label{6Tau1}
\bigl[\tau^1(R\otimes S)\bigr](x)\,=\, SxR,\qquad
x\in S^2(H_1,H_2),\, R\in M_1,\,S\in M_2.
\end{equation}
Further  this property determines $\tau^1$. 
Likewise we may consider 
$$
\tau^3\colon M_2^{op}\overline{\otimes} M_3\hookrightarrow\,
B(S^2(H_2,H_3)),
$$
the (necessarily unique) von Neumann algebra embedding satisfying
\begin{equation}\label{6Tau3}
\bigl[\tau^3(S\otimes T)\bigr](y) = TyS,\qquad y\in S^2(H_2,H_3),\, T\in M_3,\,S\in M_2.
\end{equation}
For convenience, for any $a\in M_1\overline{\otimes} M_2^{op}$ and any
$b\in M_2^{op}\overline{\otimes} M_3$,
we write $\tau^1_a$ instead of $\tau^1(a)$ and $\tau^3_b$ instead of $\tau^3(b)$.

The main objective of this section is to prove the following description of
modular completely bounded $S^1$-multipliers.

\begin{theorem}\label{6Factorization} Assume that $M_2$ is injective.
\begin{itemize}
\item [(a)] Let $I$ be an index set and let
$$
A=(a_i)_{i\in I}\in R_{I}^{w}\bigl(M_1\overline{\otimes} M_2^{op}\bigr)
\qquad\hbox{and}\qquad
B=(b_i)_{i\in I}\in C_{I}^{w}\bigl(M_2^{op}\overline{\otimes} M_3\bigr).
$$
For any $x\in S^2(H_1,H_2)$ and any $y\in S^2(H_2,H_3)$,
$$
\sum_{i\in I} \,\bignorm{\tau^3_{b_i}(y)\tau^1_{a_i}(x)}_1\, <\,\infty.
$$
Let $u_{A,B}\colon S^2(H_2,H_3)\times S^2(H_1,H_2)\to S^1(H_1,H_3)$ be the resulting mapping
defined by
$$
u_{A,B}(y,x)\,=\, \sum_{i\in I} \tau^3_{b_i}(y)\tau^1_{a_i}(x),\qquad
x\in S^2(H_1,H_2),\, y\in S^2(H_2,H_3).
$$
Then $\widetilde{u}_{A,B}\in CB_{(M'_3,M'_2,M'_1)}\bigl(\Gamma(H_1,H_2,H_3), S^1(H_1,H_3)\bigr)$ and
\begin{equation}\label{6cbn}
\cbnorm{\widetilde{u}_{A,B}}\,\leq\, \norm{A}_{R_{I}^{w}}\,\norm{B}_{C_{I}^{w}}.
\end{equation}

\medskip
\item [(b)] Conversely, let $u\colon S^2(H_2,H_3)\times
S^2(H_1,H_2)\to S^1(H_1,H_3)$ be a bounded bilinear map
and assume that $\widetilde{u}$ belongs to
$CB_{(M'_3,M'_2,M'_1)}\bigl(\Gamma(H_1,H_2,H_3), S^1(H_1,H_3)\bigr)$. Then there
exist an index set $I$ and two families
$$
A=(a_i)_{i\in I}\in R_{I}^{w}\bigl(M_1\overline{\otimes} M_2^{op}\bigr)
\qquad\hbox{and}\qquad
B=(b_i)_{i\in I}\in C_{I}^{w}\bigl(M_2^{op}\overline{\otimes} M_3\bigr)
$$
such that $u=u_{A,B}$ and $\norm{A}_{R_{I}^{w}}\,\norm{B}_{C_{I}^{w}} = \cbnorm{u}$.
\end{itemize}
\end{theorem}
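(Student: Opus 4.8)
The plan is to read both parts as two directions of a single correspondence, obtained by feeding the identification of Theorem \ref{3OM-S1} into the Sinclair--Smith factorization of Theorem \ref{4SS}. The starting point is that, since $M_2$ is injective, so is $M_2^{op}$, and by (\ref{1w*h}) together with Lemma \ref{1Injective1}(b) the space of symbols can be rewritten as
$$
M_2^{op}\overline{\otimes}\bigl(M_1\stackrel{w^*h}{\otimes}M_3\bigr)
\,=\,M_2^{op}\overline{\otimes}\bigl(M_{1*}\stackrel{h}{\otimes}M_{3*}\bigr)^*
\,\simeq\,CB\bigl(M_{1*}\stackrel{h}{\otimes}M_{3*},M_2^{op}\bigr),
$$
completely isometrically and $w^*$-continuously. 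Composing with (\ref{3Ident1}), a modular completely bounded map $u$ is then exactly the same datum as a completely bounded map $\widetilde{\sigma}\colon M_{1*}\stackrel{h}{\otimes}M_{3*}\to M_2^{op}$, and all identifications being isometric we get $\cbnorm{\widetilde{u}}=\cbnorm{\widetilde{\sigma}}$. This is precisely the shape to which Theorem \ref{4SS} applies.

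For part (b) I would apply Theorem \ref{4SS} to $\widetilde{\sigma}$ (with $F=M_{1*}$, $E=M_{3*}$ and the injective algebra $M_2^{op}$), producing an index set $I$ and families $\alpha=(\alpha_i)\in C_I^w\bigl(CB(M_{3*},M_2^{op})\bigr)$ and $\beta=(\beta_i)\in R_I^w\bigl(CB(M_{1*},M_2^{op})\bigr)$ with $\cbnorm{\alpha}\cbnorm{\beta}=\cbnorm{\widetilde{\sigma}}$ and $\widetilde{\sigma}(\psi\otimes\chi)=\sum_i\beta_i(\psi)\alpha_i(\chi)$. The decisive point is that Lemma \ref{1Injective1}(a) gives $CB(M_{3*},M_2^{op})\simeq M_2^{op}\overline{\otimes}M_3$ and $CB(M_{1*},M_2^{op})\simeq M_2^{op}\overline{\otimes}M_1\simeq M_1\overline{\otimes}M_2^{op}$, so the Sinclair--Smith factors $\alpha_i,\beta_i$ \emph{automatically} carry the extra $M_3$ and $M_1$ membership and define $B=(b_i)\in C_I^w\bigl(M_2^{op}\overline{\otimes}M_3\bigr)$ and $A=(a_i)\in R_I^w\bigl(M_1\overline{\otimes}M_2^{op}\bigr)$. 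By Lemma \ref{4CI-CB} these identifications are isometric, whence $\norm{A}_{R_I^w}\norm{B}_{C_I^w}=\cbnorm{\beta}\cbnorm{\alpha}=\cbnorm{\widetilde{\sigma}}=\cbnorm{u}$, the required norm equality.

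The remaining, and I expect hardest, step is to check that the symbol picture unwinds to the explicit formula, i.e. that $u=\tau_\varphi=u_{A,B}$. As in the proofs of Proposition \ref{2OM-B} and Theorem \ref{2OM-S1}, I would reduce by linearity and $w^*$-continuity to elementary tensors $b_i=S\otimes T$ and $a_i=R\otimes S'$. Unwinding Lemma \ref{1Injective1}(a) gives $\alpha_i(\chi)=\chi(T)S$ and $\beta_i(\psi)=\psi(R)S'$, so the product in $M_2^{op}$ reads $\beta_i(\psi)\alpha_i(\chi)=\psi(R)\chi(T)\,SS'$, the reversed multiplication of $M_2^{op}$ turning into the operator product $SS'$ in $B(H_2)$. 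Matching this with the elementary-tensor form of Lemma \ref{1Injective1}(b) shows that $\widetilde{\sigma}$ corresponds to the symbol $\varphi=R\otimes(SS')\otimes T$ in $B(H_1)\overline{\otimes}B(H_2)^{op}\overline{\otimes}B(H_3)$; then (\ref{2Sigma2}) yields $\tau_\varphi(y,x)=Ty(SS')xR$, which by (\ref{6Tau1}) and (\ref{6Tau3}) is exactly $\tau^3_{b_i}(y)\tau^1_{a_i}(x)$. The delicate bookkeeping is that the central factor $M_2^{op}$ plays a double role — it is simultaneously the target of Sinclair--Smith and the shared ``middle leg'' whose multiplication realizes the sum over $i$ — so I would verify the opposite-algebra conventions carefully against Remark \ref{2Rk}.

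Finally, part (a) is the converse reading of the same correspondence. Given $A,B$ I would produce $\alpha,\beta$ through Lemma \ref{1Injective1}(a) and Lemma \ref{4CI-CB}, set $\widetilde{\sigma}(\psi\otimes\chi)=\sum_i\beta_i(\psi)\alpha_i(\chi)$, and invoke the easy (classical) half of the factorization to get $\cbnorm{\widetilde{\sigma}}\le\cbnorm{\alpha}\cbnorm{\beta}=\norm{A}_{R_I^w}\norm{B}_{C_I^w}$; transporting back through the isometric identifications above gives a modular symbol $\varphi$ with $\cbnorm{\widetilde{\tau_\varphi}}=\cbnorm{\widetilde{\sigma}}$, and the computation of the third paragraph identifies $\tau_\varphi$ with $u_{A,B}$, establishing (\ref{6cbn}). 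To make sense of the defining series independently I would record the direct estimate
$$
\sum_i\norm{\tau^3_{b_i}(y)\tau^1_{a_i}(x)}_1
\,\le\,\Bigl(\sum_i\norm{\tau^3_{b_i}(y)}_2^2\Bigr)^{1/2}
\Bigl(\sum_i\norm{\tau^1_{a_i}(x)}_2^2\Bigr)^{1/2},
$$
using $\norm{PQ}_1\le\norm{P}_2\norm{Q}_2$ and Cauchy--Schwarz, and bound the two factors by $\norm{B}_{C_I^w}\norm{y}_2$ and $\norm{A}_{R_I^w}\norm{x}_2$ respectively, exploiting that $\tau^3$ and $\tau^1$ are (anti)homomorphisms of von Neumann algebras, so that $\sum_i\tau^3(b_i)^*\tau^3(b_i)=\tau^3\bigl(\sum_i b_i^*b_i\bigr)$ and symmetrically $\sum_i\tau^1(a_i)^*\tau^1(a_i)=\tau^1\bigl(\sum_i a_ia_i^*\bigr)$.
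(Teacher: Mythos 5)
Your proposal is correct in substance and, for part (b), follows essentially the paper's own route: the identification of $CB_{(M'_3,M'_2,M'_1)}\bigl(\Gamma(H_1,H_2,H_3),S^1(H_1,H_3)\bigr)$ with $CB\bigl(M_{1*}\stackrel{h}{\otimes}M_{3*},M_2^{op}\bigr)$ (the paper's (\ref{6a-alpha-2}), obtained exactly as you say from Theorem \ref{3OM-S1} and Lemma \ref{1Injective1} (b)), then Theorem \ref{4SS} applied to the resulting $\sigma$, then Lemma \ref{1Injective1} (a) to convert the Sinclair--Smith factors into $A\in R_I^w\bigl(M_1\overline{\otimes}M_2^{op}\bigr)$ and $B\in C_I^w\bigl(M_2^{op}\overline{\otimes}M_3\bigr)$ with the stated norm equality. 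Your elementary-tensor computation matching $\beta_i(\psi)\alpha_i(\chi)$ with $\tau^3_{b_i}(y)\tau^1_{a_i}(x)$ is precisely the content of the paper's Lemmas \ref{6Magic1} and \ref{6Magic2}, including the correct bookkeeping of the opposite multiplication in the middle leg.

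Two differences deserve comment. First, part (a): the paper proves it directly, via your Cauchy--Schwarz estimate (with the same anti-homomorphism identity $\sum_i\tau^1(a_i)^*\tau^1(a_i)=\tau^1\bigl(\sum_i a_ia_i^*\bigr)$, which you state correctly), the matrix-level criterion of Lemma \ref{2CB} for the cb bound (\ref{6cbn}), and a direct modularity check. Your alternative — transport $\sum_i\beta_i\alpha_i$ back through (\ref{6a-alpha-2}) and invoke the easy converse of Sinclair--Smith — also works and yields modularity for free, but it uses the injectivity of $M_2$, which the direct argument does not (the paper remarks that part (a) holds without injectivity); moreover the direct norm-convergence statement of part (a) is itself an input to the limit argument needed in part (b). Second, the step you compress as ``reduce by linearity and $w^*$-continuity'' is where the remaining technical work lies: the elementary-tensor computation plus a $w^*$-limit handles a \emph{single} pair $(a_i,b_i)$ (this is what Lemmas \ref{6Magic1} and \ref{6Magic2} do), but the infinite sum over $i$ requires introducing the finite partial sums $\sigma_J$ and $u_J$, proving the uniform bound $\cbnorm{\sigma_J}\le\norm{A}_{R_I^w}\norm{B}_{C_I^w}$ (the paper's (\ref{6Uniform})) so that point-$w^*$ convergence of $\sigma_J$ upgrades to $w^*$-convergence in the cb space, and finally reconciling the resulting weak convergence of $u_J(y,x)$ with the norm convergence of the series $\sum_i\tau^3_{b_i}(y)\tau^1_{a_i}(x)$ to conclude $u=u_{A,B}$. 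Your plan contains both ingredients (the easy converse supplies the uniform bound, Cauchy--Schwarz supplies the norm convergence), so it is completable, but this reconciliation must be made explicit rather than absorbed into ``$w^*$-continuity''.
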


We will establish two intermediate lemmas before proceeding to the
proof. We recall the mapping  $\tau$ from (\ref{2Sigma1}). In the sequel
we use the notation $1$
for the unit of either $B(H_1)$ or $B(H_3)$. Thus for any
$a\in M_1\overline{\otimes} M_2^{op}$, we may consider
$a\otimes 1\in M_1\overline{\otimes} M_2^{op}\overline{\otimes}  M_3$.
Likewise, for any
$b\in M_2^{op}\overline{\otimes} M_3$,
we may consider $1\otimes b\in M_1\overline{\otimes} M_2^{op}\overline{\otimes}  M_3$. The following is a generalization
of \cite[Lemma 20]{CLS}.

\begin{lemma}\label{6Magic1}
For any $a\in M_1\overline{\otimes} M_2^{op}$, for any $b\in M_2^{op}\overline{\otimes} M_3$,
and for any
$x\in S^2(H_1,H_2)$ and $y\in S^2(H_2,H_3)$, we have
\begin{equation}\label{6Magic11}
\tau_{(a\otimes 1)(1\otimes b)} (y,x)
\,=\,
\tau^3_{b}(y)\tau^1_{a}(x).
\end{equation}
\end{lemma}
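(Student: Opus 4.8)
The plan is to check the identity (\ref{6Magic11}) first for elementary tensors, where it reduces to the defining formulas (\ref{2Sigma2}), (\ref{6Tau1}) and (\ref{6Tau3}), then to propagate it by bilinearity and finally by $w^*$-continuity. The only genuinely algebraic ingredient is the multiplication rule in the von Neumann algebra tensor product $M_1\overline{\otimes}M_2^{op}\overline{\otimes}M_3$, together with the fact that the product in the middle factor $M_2^{op}$ is the reverse of the product in $M_2$.

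First I would take $a=R\otimes S_1$ and $b=S_2\otimes T$ with $R\in M_1$, $S_1,S_2\in M_2$ and $T\in M_3$, where $S_1,S_2$ are read inside $M_2^{op}$. Computing $(a\otimes 1)(1\otimes b)$ componentwise, the outer factors produce $R\cdot 1=R$ and $1\cdot T=T$, while the middle factor is the product $S_1S_2$ formed in $M_2^{op}$, which equals $S_2S_1$ when read in $M_2$. Hence $(a\otimes 1)(1\otimes b)=R\otimes(S_2S_1)\otimes T$, and (\ref{2Sigma2}) gives
$$
\tau_{(a\otimes 1)(1\otimes b)}(y,x)=Ty(S_2S_1)xR.
$$
On the other side, (\ref{6Tau3}) and (\ref{6Tau1}) yield $\tau^3_b(y)=TyS_2$ and $\tau^1_a(x)=S_1xR$, so their product is $TyS_2S_1xR$, which coincides with the right-hand side above. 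Bilinearity of both sides then extends (\ref{6Magic11}) to all $a$ in the algebraic tensor product $M_1\otimes M_2^{op}$ and all $b$ in $M_2^{op}\otimes M_3$.

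To reach arbitrary $a\in M_1\overline{\otimes}M_2^{op}$ and $b\in M_2^{op}\overline{\otimes}M_3$, I would argue by $w^*$-continuity in one variable at a time, in the spirit of the continuity argument in the proof of Proposition \ref{2OM-B}. Fixing $b$ in the algebraic tensor product and choosing a net $(a_t)_t\subset M_1\otimes M_2^{op}$ with $a_t\to a$ in the $w^*$-topology, the separate $w^*$-continuity of multiplication together with the $w^*$-continuity of $c\mapsto c\otimes 1$ gives $(a_t\otimes 1)(1\otimes b)\to(a\otimes 1)(1\otimes b)$ in the $w^*$-topology, whence
$$
\bigl\langle[\tau_{(a_t\otimes 1)(1\otimes b)}(y,x)](\eta_1),\xi_3\bigr\rangle\longrightarrow\bigl\langle[\tau_{(a\otimes 1)(1\otimes b)}(y,x)](\eta_1),\xi_3\bigr\rangle
$$
for all $\eta_1\in H_1$, $\xi_3\in H_3$, by the continuity of $\tau$ recorded in that proof. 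On the other side, $\tau^1$ is a normal embedding, so $\tau^1_{a_t}(x)\to\tau^1_a(x)$ weakly in $S^2(H_1,H_2)$; left composition with the fixed Hilbert-Schmidt operator $\tau^3_b(y)$ is a bounded, hence weak-to-weak continuous, map into $S^1(H_1,H_3)$, and weak convergence there entails convergence of the coefficients $\langle\,\cdotp\,\eta_1,\xi_3\rangle$. Comparing limits proves (\ref{6Magic11}) for general $a$ and $b$ in the algebraic tensor product; a second, identical net argument in $b$ removes the last restriction.

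I expect the main obstacle to be the $w^*$-continuity step rather than the elementary-tensor computation: one has to keep track of the different weak topologies on $B(H_1)\overline{\otimes}B(H_2)^{op}\overline{\otimes}B(H_3)$, on $S^2(H_1,H_2)$ and on $S^1(H_1,H_3)$ simultaneously, and to use both the separate normality of multiplication in the triple tensor product and the normality of the embeddings $\tau^1$ and $\tau^3$. Once the pointwise identity is in place no norm estimate is needed, so the proof is complete.
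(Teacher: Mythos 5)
Your proposal is correct and follows essentially the same route as the paper: verify (\ref{6Magic11}) on elementary tensors using the reversed multiplication in $M_2^{op}$ together with (\ref{2Sigma2}), (\ref{6Tau1}) and (\ref{6Tau3}), extend by bilinearity, and then pass to general $a,b$ by a $w^*$-limit argument combining the separate $w^*$-continuity of multiplication, the $w^*$-continuity of $\tau$, $\tau^1$, $\tau^3$, and convergence of matrix coefficients. The only (immaterial) difference is organizational: the paper runs both nets $(a_s)_s$ and $(b_t)_t$ simultaneously via iterated limits $\lim_s\lim_t$, whereas you generalize one variable at a time, which amounts to the same argument.
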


\begin{proof}
We fix $x\in S^2(H_1,H_2)$, $y\in S^2(H_2,H_3)$, $\eta_1\in H_1$ and $\xi_3\in H_3$.

Let $R\in M_1,  S,S'\in M_2^{op}, T\in M_3$. 
Then $(R\otimes S\otimes 1)(1\otimes S'\otimes T)
= R\otimes S'S\otimes T$. Hence by (\ref{2Sigma2}), (\ref{6Tau1}) and (\ref{6Tau3}), we have
$$
\tau_{(R\otimes S\otimes 1)(1\otimes S'\otimes T)}(y,x) \,=\, TyS'SxR\,=\, \tau^3_{S'\otimes T}(y)
\tau^1_{R\otimes S}(x).
$$
Hence the result holds true when $a$ and $b$ are elementary tensors. By linearity,
this implies (\ref{6Magic11}) in the case when $a$ and $b$ belong
to the algebraic tensor products $M_1\otimes M_2^{op}$ and
$M_2^{op}\otimes M_3$, respectively.

We now use a limit process. Let $a\in M_1\overline{\otimes} M_2^{op}$
and $b\in M_2^{op}\overline{\otimes} M_3$ be arbitrary. Let
$(a_s)_s$ be a net in $M_1\otimes M_2^{op}$ converging to
$a$ in the $w^*$-topology of $M_1\overline{\otimes} M_2^{op}$
and let $(b_t)_t$ be a net in $M_2^{op}\otimes M_3$ converging to
$b$ in the $w^*$-topology of $M_2^{op}\overline{\otimes} M_3$.
For any $s,t$, we have
\begin{equation}\label{6st}
\tau_{(a_s\otimes 1)(1\otimes b_t)} (y,x)
=\tau^3_{b_t}(y)\tau^1_{a_s}(x)
\end{equation}
by the preceding paragraph.

On the one hand, since the
product is separately $w^*$-continuous on von Neumann algebras,
\begin{equation}\label{6asbt}
(a\otimes 1)(1\otimes b) \,=\,w^*\hbox{-}\lim_s\lim_t (a_s\otimes 1)(1\otimes b_t)
\end{equation}
in $M_1\overline{\otimes} M_2^{op}\overline{\otimes}  M_3$.
Since $\tau$ is $w^*$-continuous, this implies that
$$
\bigl\langle \bigl[\tau_{(a\otimes 1)(1\otimes b)} (y,x)\bigr](\eta_1),\xi_3\bigr\rangle\,=\,
\lim_s\lim_t \bigl\langle \bigl[\tau_{(a_s\otimes 1)(1\otimes b_t)} (y,x)\bigr](\eta_1),\xi_3\bigr\rangle.
$$
On the other hand, by the $w^*$-continuity of $\tau^1$ and $\tau^3$,
$\tau^1_{a_s}\to \tau^1_{a}$ in the $w^*$-topology of $B(S^2(H_1,H_2))$
and $\tau^3_{b_t}\to \tau^3_{b}$ in the $w^*$-topology of $B(S^2(H_2,H_3))$.
Consequently, $\tau^1_{a_s}(x)\to \tau^1_{a}(x)$ in the weak topology of $S^2(H_1,H_2)$ whereas
$\tau^3_{b_t}(y)\to \tau^3_{b}(y)$ in the weak topology of $S^2(H_2,H_3)$. This readily implies that
$$
\bigl\langle \bigl[\tau^3_{b}(y)\tau^1_{a}(x)\bigr](\eta_1),\xi_3\bigr\rangle\,=\,
\lim_s\lim_t
\bigl\langle \bigl[\tau^3_{b_t}(y)\tau^1_{a_s}(x)\bigr](\eta_1),\xi_3\bigr\rangle.
$$
Combining these two limit results with (\ref{6st}), we deduce the formula
(\ref{6Magic11}).
\end{proof}

It follows from Lemma \ref{1Injective1} (a) that
we have $w^*$-continuous and completely isometric
identifications
\begin{equation}\label{6a-alpha}
M_1\overline{\otimes} M_2^{op}\,\simeq\, CB(M_{1*},M_2^{op})
\qquad\hbox{and}\qquad
M_2^{op}\overline{\otimes} M_3\,\simeq\, CB(M_{3*},M_2^{op}).
\end{equation}
Likewise,
$M_1\overline{\otimes} M_2^{op}\overline{\otimes}M_3
\,\simeq\,CB((M_1\overline{\otimes}M_3)_* , M_{2}^{op})$ hence by
\cite[Theorem 7.2.4]{ER},
we have a $w^*$-continuous and completely isometric
identification
\begin{equation}\label{6a-alpha-1}
M_1\overline{\otimes} M_2^{op}\overline{\otimes}M_3
\,\simeq\, CB\bigl(M_{1*}\stackrel{\frown}{\otimes} M_{3*}, M_{2}^{op}\bigr).
\end{equation}

\begin{lemma}\label{6Magic2}
Assume that $M_2$ is injective.
Let $a\in M_1\overline{\otimes} M_2^{op}$ and $b\in M_2^{op}\overline{\otimes} M_3$.
Let $\alpha\in CB(M_{1*}, M_2^{op})$ and $\beta\in CB(M_{3*}, M_2^{op})$
be corresponding to $a$ and $b$, respectively, through the identifications (\ref{6a-alpha}).
Let
$$
\sigma_{a,b}\colon M_{1*}\stackrel{\frown}{\otimes} M_{3*}\longrightarrow M_2^{op}
$$
be the completely bounded map
corresponding to $(a\otimes 1)(1\otimes b)$
through the identification (\ref{6a-alpha-1}). Then we have
\begin{equation}\label{6Magic22}
\sigma_{a,b}(v_1\otimes v_3)\,=\,\alpha(v_1)\beta(v_3)
\end{equation}
for any $v_1\in M_{1*}$ and any $v_3\in M_{3*}$.
\end{lemma}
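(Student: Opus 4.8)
The plan is to mimic the proof of Lemma~\ref{6Magic1}: first check (\ref{6Magic22}) for elementary tensors, then extend by bilinearity to the algebraic tensor products, and finally pass to arbitrary $a$ and $b$ by a $w^*$-continuity argument. The computation rests on the slice-map descriptions behind the identifications (\ref{6a-alpha}) and (\ref{6a-alpha-1}): $\alpha(v_1)$ is the slice of $a$ against $v_1$ on the $M_1$-factor, $\beta(v_3)$ is the slice of $b$ against $v_3$ on the $M_3$-factor, and $\sigma_{a,b}(v_1\otimes v_3)$ is the slice of $(a\otimes 1)(1\otimes b)$ against $v_1$ on the $M_1$-factor and against $v_3$ on the $M_3$-factor; all three land in $M_2^{op}$.

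For the elementary case, I would fix $R\in M_1$, $S,S'\in M_2^{op}$, $T\in M_3$ and set $a=R\otimes S$, $b=S'\otimes T$. Writing $a\otimes 1=R\otimes S\otimes 1$ and $1\otimes b=1\otimes S'\otimes T$ (the units being in $M_3$ and $M_1$), componentwise multiplication in $M_1\overline{\otimes} M_2^{op}\overline{\otimes} M_3$ gives
$$
(a\otimes 1)(1\otimes b)\,=\, R\otimes (SS')\otimes T,
$$
where $SS'$ is the product in $M_2^{op}$. Slicing this single elementary tensor against $v_1$ and $v_3$ yields $\sigma_{a,b}(v_1\otimes v_3)=v_1(R)\,v_3(T)\,SS'$, while on the other side $\alpha(v_1)=v_1(R)\,S$ and $\beta(v_3)=v_3(T)\,S'$, whence $\alpha(v_1)\beta(v_3)=v_1(R)\,v_3(T)\,SS'$ as well. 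This settles (\ref{6Magic22}) for elementary $a,b$, and since both sides are bilinear in $(a,b)$ it extends to $a\in M_1\otimes M_2^{op}$ and $b\in M_2^{op}\otimes M_3$.

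The remaining and main step is the passage to arbitrary $a,b$, which I would carry out exactly as in Lemma~\ref{6Magic1} after testing against a normal functional. Fixing $w\in (M_2^{op})_*$, the identity (\ref{6Magic22}) is equivalent to the scalar equality $\langle\sigma_{a,b}(v_1\otimes v_3),w\rangle=\langle\alpha(v_1)\beta(v_3),w\rangle$, and its left-hand side equals $\langle (a\otimes 1)(1\otimes b),\,v_1\otimes w\otimes v_3\rangle$. Choosing nets $a_s\to a$ and $b_t\to b$ in the respective algebraic tensor products with $w^*$-convergence, separate $w^*$-continuity of the von Neumann algebra product gives $(a_s\otimes 1)(1\otimes b_t)\to (a\otimes 1)(1\otimes b)$ as an iterated $w^*$-limit, so the left-hand side converges since $v_1\otimes w\otimes v_3$ is normal. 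For the right-hand side, the $w^*$-homeomorphisms (\ref{6a-alpha}) make $\alpha(v_1)$ and $\beta(v_3)$ depend $w^*$-continuously on $a$ and $b$, and separate $w^*$-continuity of multiplication in $M_2^{op}$ lets $\langle\alpha(v_1)\beta(v_3),w\rangle$ pass to the same iterated limit. Hence the elementary identity propagates to all $a,b$. The only delicate point is the bookkeeping of these iterated $w^*$-limits; note also that injectivity of $M_2$ is not actually used here, the identifications (\ref{6a-alpha}) and (\ref{6a-alpha-1}) relying solely on Lemma~\ref{1Injective1}(a).
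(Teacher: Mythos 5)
Your proposal is correct and follows essentially the same route as the paper's proof: verify (\ref{6Magic22}) on elementary tensors (your $SS'$ taken in $M_2^{op}$ is the paper's $S'S$ in the ordinary product, so the computations agree), extend by bilinearity, and then pass to general $a,b$ via iterated $w^*$-limits using separate $w^*$-continuity of multiplication and the $w^*$-continuity of the identifications (\ref{6a-alpha}) and (\ref{6a-alpha-1}) — your phrasing via pairing against a normal functional $w\in (M_2^{op})_*$ is just an equivalent bookkeeping of the same argument. Your closing remark is also accurate: the paper's own proof of this lemma never invokes injectivity of $M_2$, which only enters afterwards, when Lemma \ref{1Injective1}(b) is used to restrict (\ref{6a-alpha-1}) to the weak$^*$ Haagerup subspace.
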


\begin{proof}
We fix $v_1\in M_{1*}$ and $v_3\in M_{3*}$.

Let $R\in M_1$, $S,S'\in M_2^{op}$ 
and $T\in M_3$, and assume first that $a=R\otimes S$ and
$b=S'\otimes T$. Then $\alpha(v_1)= \langle R,v_1\rangle_{M_1,M_{1*}} S$ and $\beta(v_3)= \langle T, v_3
\rangle_{M_3,M_{3*}} S'$. Hence
$$
\alpha(v_1)\beta(v_3) = \langle R,v_1\rangle_{M_1,M_{1*}} \langle T, v_3
\rangle_{M_3,M_{3*}} S'S.
$$
Since $(a\otimes 1)(1\otimes b) =  R\otimes S'S\otimes T$,
$\sigma_{a,b}(v_1\otimes v_3)$ is also equal to
$\langle R, v_1\rangle_{M_1,M_{1*}}
\langle T, v_3
\rangle_{M_3,M_{3*}} S'S$. This proves the result
in this special case. By linearity, we deduce that (\ref{6Magic22})
holds true when $a$ and $b$ belong
to the algebraic tensor products $M_1\otimes M_2^{op}$ and
$M_2^{op}\otimes M_3$.

As in the proof of the preceding lemma, we deduce the
general case by a limit process.
Let $a\in M_1\overline{\otimes} M_2^{op}$
and $b\in M_2^{op}\overline{\otimes} M_3$ be arbitrary. Let
$(a_s)_s$ be a net in $M_1\otimes M_2^{op}$ converging to
$a$ in the $w^*$-topology of $M_1\overline{\otimes} M_2^{op}$
and let $(b_t)_t$ be a net in $M_2^{op}\otimes M_3$ converging to
$b$ in the $w^*$-topology of $M_2^{op}\overline{\otimes} M_3$. Then
for any $s,t$, let $\alpha_s\in CB(M_{1*}, M_2^{op})$ and $\beta_t\in CB(M_{3*}, M_2^{op})$
be corresponding to $a_s$ and $b_t$, respectively.
By the
preceding paragraph,
$$
\sigma_{a_s,b_t}(v_1\otimes v_3)\,=\,\alpha_s(v_1)\beta_t(v_3)
$$
for any $s,t$.

Since the identifications (\ref{6a-alpha}) are $w^*$-continuous,
$\alpha_s(v_1)\to\alpha(v_1)$ and $\beta_t(v_3)\to\beta(v_3)$
in the $w^*$-topology of $M_2^{op}$. Since the
product is separately $w^*$-continuous on von Neumann algebras, this implies
that
$$
\alpha(v_1)\beta(v_3)\,=\, w^*\hbox{-}\lim_s\lim_t \alpha_s(v_1)\beta_t(v_3).
$$
Next since the identification (\ref{6a-alpha-1}) is $w^*$-continuous, it follows from
(\ref{6asbt}) that
$$
\sigma_{a,b}(v_1\otimes v_3)\,=\, w^*\hbox{-}\lim_s\lim_t
\sigma_{a_s,b_t}(v_1\otimes v_3).
$$
The identity (\ref{6Magic22}) follows at once.
\end{proof}

%

Note that if $M_2$ is injective, then by Lemma \ref{1Injective1} (b)
the identification (\ref{6a-alpha-1}) restricts to an identification
between $M_2^{op}\overline{\otimes}\bigl(M_1\stackrel{w^*h}{\otimes} M_3\bigr)$
and $CB\bigl(M_{1*}\stackrel{h}{\otimes} M_{3*}, M_{2}^{op}\bigr)$.
Combining with (\ref{3Ident1}), we deduce a $w^*$-continuous and completely isometric
identification
\begin{equation}\label{6a-alpha-2}
CB_{(M'_3,M'_2,M'_1)}\bigl(\Gamma(H_1,H_2,H_3), S^1(H_1,H_3)\bigr)
\,\simeq\, CB\bigl(M_{1*}\stackrel{h}{\otimes} M_{3*}, M_{2}^{op}\bigr).
\end{equation}
This will be used in the proof below.

\begin{proof}[Proof of Theorem \ref{6Factorization}]

\

\smallskip (a): Consider $x\in S^2(H_1,H_2)$ and $y\in S^2(H_2,H_3)$. We have
\begin{align*}
\sum_{i\in I} \bignorm{\tau^3_{b_i}(y)\tau^1_{a_i}(x)}_1\,
& \leq\, \sum_{i\in I} \bignorm{\tau^3_{b_i}(y)}_2 \bignorm{\tau^1_{a_i}(x)}_2\\
&\leq\,\Bigl(\sum_{i\in I} \bignorm{\tau^3_{b_i}(y)}_2^2\Bigr)^{\frac12} \Bigl(\sum_{i\in I}
\bignorm{\tau^1_{a_i}(x)}_2^2\Bigr)^{\frac12},
\end{align*}
by the Cauchy-Schwarz inequality.

Let $J\subset I$ be a finite subset.
Since $\tau^3$ is a $*$-homomorphism, we have
\begin{align*}
\sum_{i\in J} \bignorm{\tau^3_{b_i}(y)}_2^2\,
& =\, \sum_{i\in J} \bigl\langle
{\tau^3_{b_i}}^*\tau^3_{b_i}(y),y\bigr\rangle_{S^2}\\
& =\,\Bigl\langle \tau^3\Bigl(\sum_{i\in J} b_i^*b_i\Bigr) (y),y\Bigr\rangle_{S^2}\\
& \leq\, \Bignorm{\sum_{i\in J} b_i^*b_i}\,\norm{y}^2_2\\
&\leq\, \norm{B}_{C_{I}^{w}}^2\,\,\norm{y}^2_2.
\end{align*}
Since $J$ is arbitrary, this implies that
\begin{equation}\label{6Square1}
\sum_{i\in I} \bignorm{\tau^3_{b_i}(y)}_2^2\,
\leq\, \norm{B}_{C_{I}^{w}}^2\,\,\norm{y}^2_2.
\end{equation}
Likewise,
\begin{equation}\label{6Square2}
\sum_{i\in I} \bignorm{\tau^1_{a_i}(x)}_2^2
\,\leq\,
\norm{A}_{R_{I}^{w}}^2\,\,\norm{x}^2_2.
\end{equation}
This implies
$$
\sum_{i\in I} \,\bignorm{\tau^3_{b_i}(y)\tau^1_{a_i}(x)}_1\, \leq \,
\norm{A}_{R_{I}^{w}}
\norm{B}_{C_{I}^{w}}\norm{x}_2\norm{y}_2,
$$
which allows the definition of $u_{A,B}$.

Let $n\geq 1$ be an integer,  let $x_1,\ldots, x_n \in
S^2(H_1,H_2)$ and let $y_1,\ldots,y_n \in S^2(H_2,H_3)$.
In the space $S^1(\ell^2_n(H_1),\ell^2_n(H_3))$, we have
the equality
$$
\bigl[u_{A,B}(y_k,x_l)\bigr]_{1\leq k,l\leq n}
= \sum_{i\in I} \bigl[\tau^3_{b_i}(y_k)\tau^1_{a_i}(x_l)\bigr]_{1\leq k,l\leq n}.
$$
Further for any $i\in I$, we have
$$
\bignorm{\bigl[\tau^3_{b_i}(y_k)\tau^1_{a_i}(x_l)\bigr]_{1\leq k,l\leq n}}_{
S^1(\ell^2_n(H_1),\ell^2_n(H_3))}\,\leq\,
\Bigl(\sum_{k=1}^n\norm{\tau^3_{b_i}(y_k)}_2^2\Bigr)^{\frac12}
\Bigl(\sum_{l=1}^n\norm{\tau^1_{a_i}(x_l)}_2^2\Bigr)^{\frac12}.
$$
Consequently, using Cauchy-Schwarz,
\begin{align*}
\bignorm{\bigl[u_{A,B}(y_k,x_l)\bigr]_{1\leq k,l\leq n}}_{S^1(\ell^2_n(H_1),\ell^2_n(H_3))}
\,& \leq\, \sum_{i\in I} \Bigl(\sum_{k=1}^n\norm{\tau^3_{b_i}(y_k)}_2^2\Bigr)^{\frac12}
\Bigl(\sum_{l=1}^n\norm{\tau^1_{a_i}(x_l)}_2^2\Bigr)^{\frac12}\\
&\leq\,\Bigl(\sum_{i\in I} \sum_{k=1}^n\norm{\tau^3_{b_i}(y_k)}_2^2\Bigr)^{\frac12}
\Bigl(\sum_{i\in I}\sum_{l=1}^n\norm{\tau^1_{a_i}(x_l)}_2^2\Bigr)^{\frac12}.
\end{align*}
It therefore follows from (\ref{6Square1}) and (\ref{6Square2})  that
$$
\bignorm{\bigl[u_{A,B}(y_k,x_l)\bigr]_{1\leq k,l\leq n}}_{S^1(\ell^2_n(H_1),\ell^2_n(H_3))}
\,\leq\,\norm{A}_{R_{I}^{w}}\norm{B}_{C_{I}^{w}}\Bigl(\sum_{k=1}^n\norm{y_k}_2^2\Bigr)^{\frac12}
\Bigl(\sum_{l=1}^n\norm{x_l}_2^2\Bigr)^{\frac12}.
$$
According to Lemma \ref{2CB}, this shows that $\widetilde{u}_{A,B} $ is completely bounded
and that (\ref{6cbn}) holds.

Again let $x\in S^2(H_1,H_2)$ and $y\in S^2(H_2,H_3)$.
Using a simple approximation process,
one can check that for any
$R\in M_1'$, $S\in M_2'$ and $T\in M_3'$, we have
$$
\tau^1_a(xR)=\tau^1_a(x)R,\quad
\tau^1_a(Sx)=S\tau^1_a(x),\quad
\tau^3_b(yS)= \tau^3(y)S\quad\hbox{and}\quad
\tau^3_b(Ty)=T\tau^3(y)
$$
whenever $a\in M_1\overline{\otimes} M_2^{op}$ and $b\in M_2^{op}\overline{\otimes} M_3$.
This implies that $(y,x)\mapsto
\tau^3_b(y)\tau^1_a(x)$ is an $(M_3',M_2',M_1')$-module map for any
$a\in M_1\overline{\otimes} M_2^{op}$ and $b\in M_2^{op}\overline{\otimes} M_3$.
This readily implies that $u_{A,B}$ is an $(M_3',M_2',M_1')$-module map.

\bigskip
(b): Assume that $\widetilde{u}\in CB_{(M'_3,M'_2,M'_1)}
\bigl(\Gamma(H_1,H_2,H_3), S^1(H_1,H_3)\bigr)$. Let
$$
\sigma\colon M_{1*}\stackrel{h}{\otimes} M_{3*}\longrightarrow M_2^{op}
$$
be the completely bounded map corresponding to
$\widetilde{u}$ through the identification (\ref{6a-alpha-2}). Since $M_2$ is injective, we may apply
Theorem \ref{4SS} to $\sigma$. We obtain the existence of an index set $I$
and two families $(\alpha_i)_{i\in I}\in R_{I}^{w}\bigl(CB(M_{1*}, M_2^{op})\bigr)$
and $(\beta_i)_{i\in I}\in C_{I}^{w}\bigl(CB(M_{3*}, M_2^{op})\bigr)$ such that
$$
\sigma(v_1\otimes v_3)\,=\,\sum_{i\in I} \alpha_i(v_1)\beta_i(v_3),
\qquad v_1\in M_{1*},\ v_3\in M_{3*}.
$$
For any $i\in I$, we let $a_i\in M_1\overline{\otimes} M_2^{op}$ and
$b_i\in M^{op}_2\overline{\otimes} M_3$ be corresponding to $\alpha_i$ and
$\beta_i$, respectively,  through the identifications (\ref{6a-alpha}). Then we set
$A=(a_i)_{i\in I}$ and $B=(b_i)_{i\in I}$. By Theorem \ref{4SS}, we may assume that
$\norm{A}_{R_{I}^{w}}\norm{B}_{C_{I}^{w}} = \cbnorm{u}$.

For any finite subset $J\subset I$, we may define
$$
u_J\colon S^2(H_2,H_3)\times S^2(H_1,H_2)\to S_1(H_1,H_3)
\qquad\hbox{and}\qquad
\sigma_J\colon M_{1*}\stackrel{h}{\otimes} M_{3*}\to M_2^{op}
$$
by
$$
u_J(y,x)\,=\, \sum_{i\in J} \tau^3_{b_i}(y)\tau^1_{a_i}(x),
\qquad x\in S^2(H_1,H_2),\ y\in S^2(H_2,H_3),
$$
and
$$
\sigma_J(v_1\otimes v_3)\,=\,\sum_{i\in J} \alpha_i(v_1)\beta_i(v_3),
\qquad v_1\in M_{1*},\ v_3\in M_{3*}.
$$
It follows from Lemmas \ref{6Magic1} and \ref{6Magic2} that for any $i$,
the mapping $(v_1\otimes v_3)\to \alpha_i(v_1)\beta_i(v_3)$ corresponds to
the mapping $y\otimes x\mapsto \tau^3_{b_i}(y)\tau^1_{a_i}(x)$ through the identification
(\ref{6a-alpha-2}). By linearity we deduce that $\sigma_J$ corresponds to
$\widetilde{u}_J$ through (\ref{6a-alpha-2}).

We observe that by the easy (and well-known)
converse to Theorem \ref{4SS}, we have
$$
\cbnorm{\sigma_J}\leq \bignorm{(\alpha_i)_{i\in J}}_{R_{J}^{w}(CB(M_{1*}, M_2^{op}))}
\bignorm{(\beta_i)_{i\in J}}_{C_{J}^{w}(CB(M_{3*}, M_2^{op}))}.
$$
This implies the following uniform boundedness,
\begin{equation}\label{6Uniform}
\forall\, J\subset I\ \hbox{finite},\qquad
\cbnorm{\sigma_J}\,\leq\, \norm{A}_{R_{I}^{w}}\norm{B}_{C_{I}^{w}}.
\end{equation}

In the sequel we consider the set of finite subsets of $I$ as directed by inclusion.
We observe that for any $v_1\in M_{1*}$ and $v_3\in M_{3*}$, $\sigma_J(v_1
\otimes v_3)\to \sigma(v_1\otimes v_3)$ in
the $w^*$-topology of $M_2^{op}$. Using the uniform boundedness (\ref{6Uniform}), this implies
that $\sigma_J\to \sigma$ in the point-$w^*$-topology of $CB\bigl(
M_{1*}\stackrel{h}{\otimes} M_{3*},M_2^{op}\bigr)$. Applying (\ref{6Uniform}) again, we deduce
that $\sigma_J\to \sigma$ in the $w^*$-topology of $CB\bigl(
M_{1*}\stackrel{h}{\otimes} M_{3*},M_2^{op}\bigr)$. Since the identification (\ref{6a-alpha-2}) is
a $w^*$-continuous one, this implies that $\widetilde{u}_J\to \widetilde{u}$
is the $w^*$-topology of $CB\bigl(\Gamma(H_1,H_2,H_3), S^1(H_1,H_3)\bigr)$.

Let $x\in S^2(H_1,H_2)$ and $y\in S^2(H_2,H_3)$. The above implies
that $u_J(y,x)\to u(y,x)$ in the $w^*$-topology
of $S^1(H_1,H_3)$. However by part (a) of the theorem,
$$
u_J(y,x)\,\longrightarrow\, \sum_{i\in I} \tau^3_{b_i}(y)\tau^1_{a_i}(x)
$$
in the norm topology of $S^1(H_1,H_3)$. This shows that $u(y,x)$
is equal to this sum, and proves the result.
\end{proof}

\begin{remark} It is clear from its proof that 
part (a) of Theorem \ref{6Factorization} is true without assuming
that $M_2$ is injective. 

The injectivity assumption in Theorem \ref{4SS}
is necessary, see \cite[Theorem 5.3]{SS}, however 
we do not know if it is necessary in part (b) 
of Theorem \ref{6Factorization}.
\end{remark}

The next corollary follows from the above proof.

\begin{corollary}\label{6Facto-varphi}
Assume that $M_2$ is injective and let $\varphi\in M_1\overline{\otimes} M_2^{op}\overline{\otimes} M_3$.
Then $\tau_\varphi$ is a completely bounded $S^1$-multiplier if and only if there
exist an index set $I$ and families
$$
(a_i)_{i\in I}\in R_{I}^{w}\bigl(M_1\overline{\otimes} M_2^{op}\bigr)
\qquad\hbox{and}\qquad
(b_i)_{i\in I}\in C_{I}^{w}\bigl(M_2^{op}\overline{\otimes} M_3\bigr)
$$
such that
$$
\varphi\,=\, \sum_{i\in I} (a_i\otimes 1)(1\otimes b_i),
$$
where the convergence in taken in the $w^*$-topology.
Further 
$$
\cbnorm{\tau_\varphi}\,=\,\inf\Bigl\{
\bignorm{(a_i)_{i\in I}}_{R_I^\omega}\bignorm{(b_i)_{i\in I}}_{C_I^\omega}\Bigr\},
$$
where the infimumm runs over all possible families 
$(a_i)_{i\in I}$ and $(b_i)_{i\in I}$ providing such a factorization of 
$\varphi$.
\end{corollary}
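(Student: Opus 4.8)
The plan is to read this corollary off from Theorem~\ref{6Factorization} by translating everything through the symbol correspondence $\tau$ of Proposition~\ref{2OM-B}, which is an injective, $w^*$-continuous, completely isometric identification of $B(H_1)\overline{\otimes}B(H_2)^{op}\overline{\otimes}B(H_3)$ with $CB(\Gamma(H_1,H_2,H_3),B(H_1,H_3))$, and is in particular injective on the subspace $M_1\overline{\otimes}M_2^{op}\overline{\otimes}M_3$. The one bridging fact I would isolate first is that, for any families $A=(a_i)_{i\in I}\in R_I^w(M_1\overline{\otimes}M_2^{op})$ and $B=(b_i)_{i\in I}\in C_I^w(M_2^{op}\overline{\otimes}M_3)$, the map $u_{A,B}$ of Theorem~\ref{6Factorization}(a) satisfies
\begin{equation}\label{6CorBridge}
u_{A,B}\,=\,\tau_\psi,\qquad\hbox{where}\quad \psi\,=\,\sum_{i\in I}(a_i\otimes 1)(1\otimes b_i),
\end{equation}
the sum being $w^*$-convergent in $M_1\overline{\otimes}M_2^{op}\overline{\otimes}M_3$. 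To establish (\ref{6CorBridge}) I would fix $x,y,\eta_1,\xi_3$ and compare two limits over the finite subsets $J\subset I$: Lemma~\ref{6Magic1} gives $\tau_{\psi_J}(y,x)=\sum_{i\in J}\tau^3_{b_i}(y)\tau^1_{a_i}(x)$ for the partial symbols $\psi_J$, and since $\psi_J\to\psi$ in the $w^*$-topology the $w^*$-continuity of $\tau$ forces $\tau_{\psi_J}(y,x)\to\tau_\psi(y,x)$ weakly; meanwhile the Cauchy--Schwarz summability estimate of Theorem~\ref{6Factorization}(a) shows $\sum_{i\in J}\tau^3_{b_i}(y)\tau^1_{a_i}(x)\to u_{A,B}(y,x)$ in the norm of $S^1(H_1,H_3)$. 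Equating the two weak limits yields (\ref{6CorBridge}); this is precisely the computation already carried out in the last paragraph of the proof of Theorem~\ref{6Factorization}(b).

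Granting (\ref{6CorBridge}), both directions become immediate. If $\varphi=\sum_i(a_i\otimes 1)(1\otimes b_i)$ $w^*$-converges with $A,B$ as above, then $\tau_\varphi=\tau_\psi=u_{A,B}$, which by Theorem~\ref{6Factorization}(a) is a completely bounded $S^1$-multiplier with $\cbnorm{\tau_\varphi}\leq\norm{A}_{R_I^w}\norm{B}_{C_I^w}$. Conversely, if $\tau_\varphi$ is a completely bounded $S^1$-multiplier then, because $\varphi\in M_1\overline{\otimes}M_2^{op}\overline{\otimes}M_3$, Corollary~\ref{3Mod-B} guarantees that $\tau_\varphi$ is $(M_3',M_2',M_1')$-modular, so $\widetilde{\tau_\varphi}$ lies in $CB_{(M_3',M_2',M_1')}(\Gamma(H_1,H_2,H_3),S^1(H_1,H_3))$. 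Theorem~\ref{6Factorization}(b) then produces families $A,B$ with $\tau_\varphi=u_{A,B}$ and $\norm{A}_{R_I^w}\norm{B}_{C_I^w}=\cbnorm{\tau_\varphi}$. By (\ref{6CorBridge}), $u_{A,B}=\tau_\psi$ with $\psi=\sum_i(a_i\otimes 1)(1\otimes b_i)$, and the injectivity of $\tau$ forces $\varphi=\psi$, the desired $w^*$-convergent factorization.

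Finally, the norm formula follows by assembling the two halves: the first implication shows $\cbnorm{\tau_\varphi}\leq\norm{A}_{R_I^w}\norm{B}_{C_I^w}$ for \emph{every} admissible factorization of $\varphi$, so $\cbnorm{\tau_\varphi}$ is a lower bound for the infimum, while the factorization furnished by Theorem~\ref{6Factorization}(b) attains it. I do not expect a genuine obstacle here, since all the analytic work---the summability bounds and the interchange of $w^*$- and norm-limits---already lives inside Theorem~\ref{6Factorization}; the only new ingredients are the bookkeeping identity (\ref{6CorBridge}) and the injectivity of $\tau$, and the mildly delicate point is simply to keep straight that the \emph{same} pair $(A,B)$ simultaneously realizes $\varphi$ as a $w^*$-sum and $\tau_\varphi$ as $u_{A,B}$.
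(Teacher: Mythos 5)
Your proposal is correct and takes essentially the same approach as the paper: the paper's own ``proof'' of Corollary~\ref{6Facto-varphi} is simply the remark that it follows from the proof of Theorem~\ref{6Factorization}, and your reconstruction --- combining Theorem~\ref{6Factorization}~(a) and (b) with the bridging identity $u_{A,B}=\tau_\psi$ (which is exactly the limit computation in the last paragraph of the proof of Theorem~\ref{6Factorization}~(b), via Lemma~\ref{6Magic1} and the $w^*$-continuity of $\tau$), Corollary~\ref{3Mod-B} to obtain modularity of $\tau_\varphi$, and the injectivity of the complete isometry $\tau$ from Proposition~\ref{2OM-B} --- is precisely that intended argument made explicit.
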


\begin{remark}\label{6Recover}
\

(a)$\,$
Assume that $H_2=\Cdb$ is trivial. Then
$$
\Gamma(H_1,\Cdb,H_3) = \{H_3\}_c\stackrel{\frown}{\otimes} \{\overline{H_1}\}_r
\simeq S^1(H_1,H_3),
$$
by (\ref{1RC}). Hence $CB\bigl(\Gamma(H_1,\Cdb,H_3),S^1(H_1,H_3)\bigr)\simeq CB(S^1(H_1,H_3))$
and in this identification,
$CB_{(M_3',\Cdb,M_1')}\bigl(\Gamma(H_1,\Cdb,H_3),S^1(H_1,H_3)\bigr)$ coincides
with $CB_{(M_3',M_1')}(S^1(H_1,H_3))$, the space of all $(M_3',M_1')$-bimodule
completely bounded maps from $S^1(H_1,H_3)$ into itself.
Further $\tau^1\colon M_1\hookrightarrow 
B(\overline{H_1})^{op}\simeq B(H_1)$
and $\tau^3\colon M_3\hookrightarrow B(H_3)$ 
coincide with the canonical embeddings.
Hence in this case, Theorem \ref{6Factorization}
reduces to  Theorem \ref{Haag} (see also (\ref{Haag+})).

(b)$\,$ A tensor product reformulation of Corollary \ref{6Facto-varphi}
is that the bilinear mapping $(a,b)\mapsto (a\otimes 1)(1\otimes b)$ extends to a
complete quotient map
$$
(M_1\overline{\otimes} M_2^{op})
\stackrel{w^*h}{\otimes} 
(M_2^{op}\overline{\otimes} M_3)\longrightarrow
M_2^{op}\overline{\otimes}\bigl(
M_1 \stackrel{w^*h}{\otimes} 
M_3\bigr).
$$
\end{remark}

We conclude this paper by considering the special case of Schur multipliers.
Our presentation follows \cite{CLS}.  We let $(\Omega_1,\mu_1)$,  $(\Omega_2,\mu_2)$
and $(\Omega_3,\mu_3)$ be three separable
measure spaces. (The separability assumption is not essential but avoids
technical measurability issues.) Recall the classical fact that
to any $f\in L^2(\Omega_1\times\Omega_2)$, one may associate
an operator $x_f\in S^2(L^2(\Omega_1),L^2(\Omega_2))$
given by
$$
x_f(\eta) =\int_{\Omega_1} f(t_1,\,\cdotp)\eta(t_1)\,d\mu_1(t_1),\qquad \eta\in L^2(\Omega_1),
$$
and the mapping $f\mapsto x_f$ is a unitary which yields a Hilbert space identification
$$
L^2(\Omega_1\times\Omega_2)
\,\simeq\,
S^2\bigl(L^2(\Omega_1),L^2(\Omega_2)\bigr).
$$
Of course the same holds with the pairs $(\Omega_2,\Omega_3)$ and $(\Omega_1,\Omega_3)$.
For any $g\in L^2(\Omega_2\times\Omega_3)$ (resp.
$h\in L^2(\Omega_1\times\Omega_3)$) we let $y_g\in
S^2\bigl(L^2(\Omega_2),L^2(\Omega_3)\bigr)$ (resp. $z_h\in S^2\bigl(L^2(\Omega_1),L^2(\Omega_3)\bigr)$) be the
corresponding Hilbert-Schmidt operator.

To any $\varphi\in L^\infty(\Omega_1\times\Omega_2\times\Omega_3)$, one may associate a
bounded bilinear map
$$
\Lambda_\varphi\colon S^2\bigl(L^2(\Omega_2),L^2(\Omega_3)\bigr)\times
S^2\bigl(L^2(\Omega_1),L^2(\Omega_2)\bigr)
\longrightarrow S^2\bigl(L^2(\Omega_1),L^2(\Omega_3)\bigr)
$$
given for any $f\in L^2(\Omega_1\times\Omega_2)$ and $g\in L^2(\Omega_2\times\Omega_3)$ by
$$
\Lambda_\varphi(y_g,x_f)=z_h
$$
where, for almost every $(t_1,t_3)\in \Omega_1\times\Omega_3$,
$$
h(t_1,t_3)\,=\,\int_{\Omega_2}\varphi(t_1,t_2,t_3)f(t_1,t_2)g(t_2,t_3)\,d\mu_2(t_2)\,.
$$
We refer to \cite[Theorem 3.1]{JTT} or \cite[Subsection 3.2]{CLS} for the proof, and also for the fact that
$$
\norm{\Lambda_\varphi\colon S^2\times S^2\longrightarrow S^2}\, =\, \norm{\varphi}_\infty.
$$
Bilinear maps of this form will be called {\bf bilinear Schur multipliers} in the sequel.
Since
$$
S^2\bigl(L^2(\Omega_1),L^2(\Omega_3)\bigr)\,\subset \,
B\bigl(L^2(\Omega_1),L^2(\Omega_3)\bigr)
$$
contractively, we may regard any bilinear Schur multiplier
as valued in $B\bigl(L^2(\Omega_1),L^2(\Omega_3)\bigr)$. Then it follows from
the proof of \cite[Corollary 10]{CLS} that
\begin{equation}\label{5Norm}
\bignorm{\Lambda_\varphi\colon S^2\times S^2\longrightarrow B
\bigl(L^2(\Omega_1),L^2(\Omega_3)\bigr)}\, =\, \norm{\varphi}_\infty.
\end{equation}

For any $i=1,2,3$, let us regard
\begin{equation}\label{5qi*}
L^\infty(\Omega_i)\subset B(L^2(\Omega_i))
\end{equation}
as a von Neumann algebra in the usual way, that is, any $r\in L^\infty(\Omega_i)$
is identified with the multiplication operator $f\mapsto rf,\, f\in L^2(\Omega_i)$.
In the sequel we use the notions considered so far in the case when
$H_i=L^2(\Omega_i)$ and $M_i= L^\infty(\Omega_i)$. We note that
$$
L^\infty(\Omega_i)'=L^\infty(\Omega_i)
\qquad\hbox{and}\qquad L^\infty(\Omega_i)^{op}=L^\infty(\Omega_i).
$$

Using the classical von Neumann algebra identification
$$
L^\infty(\Omega_1\times\Omega_2\times\Omega_3)= L^\infty(\Omega_1)\overline{\otimes}
L^\infty(\Omega_2)\overline{\otimes}
L^\infty(\Omega_3),
$$
we may
apply the construction from Sections 3 and 4 to any
$\varphi\in L^\infty(\Omega_1\times\Omega_2\times\Omega_3)$ and consider
the operator multiplier
$$
\tau_\varphi
\colon S^2\bigl(L^2(\Omega_2),L^2(\Omega_3)\bigr)\times
S^2\bigl(L^2(\Omega_1),L^2(\Omega_2)\bigr)
\longrightarrow B\bigl(L^2(\Omega_1),L^2(\Omega_3)\bigr).
$$
It turns out that
\begin{equation}\label{5tau=lambda}
\tau_\varphi=\Lambda_\varphi.
\end{equation}
The easy verification is left to the reader.

The next proposition should be compared with \cite[Theorem 3.1]{JTT}. In the
latter result, the authors established a similar characterization
of bilinear module maps, but under the assumption that they take values in
$S^2\bigl(L^2(\Omega_1),L^2(\Omega_3)\bigr)$.

\begin{proposition}\label{6Schur1}
For any
$$
u\in B_2\bigl(S^2\bigl(L^2(\Omega_2),L^2(\Omega_3)\bigr)\times
S^2\bigl(L^2(\Omega_1),L^2(\Omega_2)\bigr),
B\bigl(L^2(\Omega_1),L^2(\Omega_3)\bigr)\bigr),
$$
the following are equivalent.
\begin{itemize}
\item [(i)] $u$ is a bilinear Schur multiplier.
\item [(ii)] $u$ is an $(L^\infty(\Omega_3), L^\infty(\Omega_2),L^\infty(\Omega_1))$-module map.
\end{itemize}
\end{proposition}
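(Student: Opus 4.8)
The plan is to prove the two implications separately: the forward one by a direct computation with Hilbert--Schmidt kernels, and the converse one by reducing the modularity to the ``diagonality'' of the maps $U_1,U_2,U_3$ of Section \ref{4MOD} and extracting a measurable symbol. For (i) $\Rightarrow$ (ii) I would first record how the multiplication operators act on kernels: identifying $r\in L^\infty(\Omega_1)$, $s\in L^\infty(\Omega_2)$, $t\in L^\infty(\Omega_3)$ with the corresponding operators, one checks that $x_fr=x_{f_r}$ with $f_r(t_1,t_2)=f(t_1,t_2)r(t_1)$, that $s\,x_f=x_{f^s}$ with $f^s(t_1,t_2)=s(t_2)f(t_1,t_2)$, and similarly that $y_gs$, $t\,y_g$, $z_hr$ and $t\,z_h$ act by multiplying $g$ by $s(t_2)$ or $t(t_3)$, and $h$ by $r(t_1)$ or $t(t_3)$. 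Feeding these into the defining integral $h(t_1,t_3)=\int_{\Omega_2}\varphi(t_1,t_2,t_3)f(t_1,t_2)g(t_2,t_3)\,d\mu_2(t_2)$, each of the three identities $\Lambda_\varphi(Ty,x)=T\Lambda_\varphi(y,x)$, $\Lambda_\varphi(y,xR)=\Lambda_\varphi(y,x)R$ and $\Lambda_\varphi(yS,x)=\Lambda_\varphi(y,Sx)$ reduces to pulling the scalar factor $t(t_3)$, $r(t_1)$ or $s(t_2)$ through the integral sign, which is immediate. Hence every bilinear Schur multiplier is $(L^\infty(\Omega_3),L^\infty(\Omega_2),L^\infty(\Omega_1))$-modular.

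For the converse (ii) $\Rightarrow$ (i) I would start from Lemma \ref{3LemMod}: since each $M_i=L^\infty(\Omega_i)$ is its own commutant, the modularity of $u$ is exactly the statement that the three four-linear maps $U_1,U_2,U_3$ are valued in $L^\infty(\Omega_1)$, $L^\infty(\Omega_2)$, $L^\infty(\Omega_3)$ respectively, that is, in multiplication operators. The idea is to manufacture a single measurable function $\varphi$ on $\Omega_1\times\Omega_2\times\Omega_3$ out of these three simultaneous diagonality conditions. Concretely, the requirement that $U_1(\overline{\xi_2},\eta_2,\overline{\xi_3},\eta_3)$ be multiplication by a function of $t_1$ (together with the analogous statements for $U_2$ and $U_3$), applied to the bounded multilinear form $(\xi_i,\eta_i)_{i=1,2,3}\mapsto\langle[u(\overline{\xi_2}\otimes\eta_3,\overline{\xi_1}\otimes\eta_2)](\eta_1),\xi_3\rangle$, should force, via the separability of the $\Omega_i$ and an almost-everywhere disintegration argument, the existence of $\varphi\in L^\infty(\Omega_1\times\Omega_2\times\Omega_3)$ with $u=\Lambda_\varphi$ on the dense set of elementary kernels. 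Boundedness of $u$ then guarantees that $\varphi$ is genuinely essentially bounded, with $\norm{\varphi}_\infty=\norm{u}$ by (\ref{5Norm}), and $u=\Lambda_\varphi$ is a bilinear Schur multiplier by definition.

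More in the spirit of the paper, one may instead route the converse through the symbol machinery: once one knows $\widetilde{u}$ is completely bounded, Proposition \ref{2OM-B} yields $u=\tau_\varphi$ with $\varphi\in B(H_1)\overline{\otimes}B(H_2)^{op}\overline{\otimes}B(H_3)$, Corollary \ref{3Mod-B} upgrades $\varphi$ into $L^\infty(\Omega_1)\overline{\otimes}L^\infty(\Omega_2)^{op}\overline{\otimes}L^\infty(\Omega_3)=L^\infty(\Omega_1\times\Omega_2\times\Omega_3)$, and (\ref{5tau=lambda}) gives $u=\tau_\varphi=\Lambda_\varphi$. The main obstacle is the same in either route and is the one genuinely nontrivial point: upgrading mere boundedness to the existence of the bounded measurable symbol, i.e.\ either proving the automatic complete boundedness of a MASA-modular map so as to enter Proposition \ref{2OM-B}, or carrying out the measurable symbol extraction directly. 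This is exactly where the separability hypothesis on the $\Omega_i$ is used; everything else is bookkeeping built on Lemma \ref{3LemMod}, Corollary \ref{3Mod-B} and the identity (\ref{5tau=lambda}).
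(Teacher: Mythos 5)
Your forward implication is fine: the kernel computations are correct and give a valid direct proof of (i) $\Rightarrow$ (ii) (the paper instead deduces this from (\ref{5tau=lambda}) and Corollary \ref{3Mod-B}, but explicitly notes that a direct proof is possible). The problem is the converse, where your text contains no proof of the one step that you yourself identify as ``the genuinely nontrivial point''. Your first route ends with the assertion that the diagonality of $U_1,U_2,U_3$ ``should force, via the separability of the $\Omega_i$ and an almost-everywhere disintegration argument, the existence of $\varphi$'' --- but this is precisely the statement to be proved, and it is not routine: Lemma \ref{3LemMod} gives you, for each choice of $\xi_2,\eta_2,\xi_3,\eta_3$, an a.e.-defined multiplier function of $t_1$ (and similarly in the other variables), and assembling these into a single jointly measurable $\varphi\in L^\infty(\Omega_1\times\Omega_2\times\Omega_3)$ is a measurable-selection problem you do not address. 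Your second route is equally blocked, as you admit: Proposition \ref{2OM-B} applies only to \emph{completely} bounded maps, $u$ is merely assumed bounded, and no automatic complete boundedness theorem for bounded modular bilinear maps into $B(H_1,H_3)$ is established in the paper; the equality $\cbnorm{\widetilde{u}}=\norm{u}$ for such maps is known only a posteriori, once $u=\Lambda_\varphi$ is in hand, so invoking it here would be circular.

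The missing idea, which is how the paper closes this gap, is to linearize and work at the level of trace-class/$L^1$ duality instead of operators or disintegration. Associate to $u$ the bounded trilinear form $U$ on $S^1(L^2(\Omega_1))\times S^1(L^2(\Omega_2))\times S^1(L^2(\Omega_3))$ determined by $U(\overline{\xi_1}\otimes\eta_1,\overline{\xi_2}\otimes\eta_2,\overline{\xi_3}\otimes\eta_3)=\langle[u(\overline{\xi_2}\otimes\eta_3,\overline{\xi_1}\otimes\eta_2)](\eta_1),\xi_3\rangle$, and introduce the quotient maps $q_i\colon S^1(L^2(\Omega_i))\to L^1(\Omega_i)$, $q_i(\overline{\xi_i}\otimes\eta_i)=\overline{\xi_i}\eta_i$, whose adjoints are the inclusions $L^\infty(\Omega_i)\subset B(L^2(\Omega_i))$. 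The content of Lemma \ref{3LemMod} (each $U_i$ valued in $L^\infty(\Omega_i)$) is exactly that $U$ vanishes whenever one argument lies in ${\rm Ker}(q_i)$; hence $U$ factors as $U(v_1,v_2,v_3)=\widehat{u}\bigl(q_1(v_1),q_2(v_2),q_3(v_3)\bigr)$ for a bounded trilinear form $\widehat{u}$ on $L^1(\Omega_1)\times L^1(\Omega_2)\times L^1(\Omega_3)$. Since $L^1(\Omega_1)\widehat{\otimes}L^1(\Omega_2)\widehat{\otimes}L^1(\Omega_3)=L^1(\Omega_1\times\Omega_2\times\Omega_3)$, whose dual is $L^\infty(\Omega_1\times\Omega_2\times\Omega_3)$, the form $\widehat{u}$ is integration against some $\varphi\in L^\infty(\Omega_1\times\Omega_2\times\Omega_3)$, and evaluating on rank-one operators yields $u=\Lambda_\varphi$. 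Note that this argument uses no complete boundedness, no disintegration, and not even the separability of the $\Omega_i$: it is pure Banach-space duality, and it is the step your proposal leaves open.
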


\begin{proof}
The implication ``(i)$\,\Rightarrow\,$(ii)" follows from (\ref{5tau=lambda}) and Corollary \ref{3Mod-B}.
(It is also possible to write a direct proof.)

To prove the converse,
assume that $u$ is $(L^\infty(\Omega_3), L^\infty(\Omega_2),L^\infty(\Omega_1))$-modular.
We let
$$
U\colon S^1(L^2(\Omega_1))\times S^1(L^2(\Omega_2))
\times S^1(L^2(\Omega_3))\longrightarrow \Cdb
$$
be the unique trilinear form satisfying
$$
U(\overline{\xi_1}\otimes\eta_1, \overline{\xi_2}\otimes\eta_2,
\overline{\xi_3}\otimes\eta_3)
\,=\, \bigl\langle \bigl[u(\overline{\xi_2}\otimes\eta_3, \overline{\xi_1}\otimes\eta_2)\bigr]
(\eta_1),\xi_3\bigr\rangle
$$
for any $\xi_1,\eta_1\in L^2(\Omega_1)$, $\xi_2,\eta_2\in L^2(\Omega_2)$
and $\xi_3,\eta_3\in L^2(\Omega_3)$.

Then for $i=1,2,3$,
let
$$
q_i\colon S^1(L^2(\Omega_i))\longrightarrow L^1(\Omega_i)
$$
be the unique bounded operator satisfying $q_i(\overline{\xi_i}\otimes\eta_i)
=\overline{\xi_i}\eta_i$ for any $\xi_i,\eta_i\in L^2(\Omega_i)$. This is a quotient map,
whose adjoint coincides with the embedding (\ref{5qi*}).

Recall the operators $U_1,U_2,U_3$ defined at the beginning of Section \ref{4MOD}.
By Lemma \ref{3LemMod}, $U_i$ is valued in $L^\infty(\Omega_i)$ for any $i=1,2,3$. This implies
that $U$ vanishes on the union of ${\rm Ker}(q_1)\times S^1(L^2(\Omega_2))
\times S^1(L^2(\Omega_3))$, $S^1(L^2(\Omega_1))\times {\rm Ker}(q_2)
\times S^1(L^2(\Omega_3))$ and $S^1(L^2(\Omega_1))
\times S^1(L^2(\Omega_2))\times {\rm Ker}(q_3)$. Consequently, there exists
a trilinear form
$$
\widehat{u}\colon L^1(\Omega_1)\times L^1(\Omega_2)\times L^1(\Omega_3)
\longrightarrow\Cdb
$$
factorizing $U$ in the sense that
$$
U(v_1,v_2,v_3) = \widehat{u}\bigl(q_1(v_1), q_2(v_2),q_3(v_3)\bigr),
\qquad v_i\in S^1(L^2(\Omega_i)).
$$
Since $L^1(\Omega_1)\widehat{\otimes} L^1(\Omega_2)\widehat{\otimes}  L^1(\Omega_3) =
L^1(\Omega_1\times \Omega_2\times \Omega_3)$ (see e.g. \cite[Chap. VIII, Example 10]{DU}),
there exists $\varphi\in L^\infty(\Omega_1\times \Omega_2\times \Omega_3)$ such that
$$
\widehat{u}(\phi_1,\phi_2,\phi_3)\,=\,\int_{\Omega_1\times\Omega_2\times\Omega_3}
\varphi(t_1,t_2,t_3) \phi_1(t_1)\phi_2(t_2)
\phi_3(t_3)\,d\mu_1(t_1)d\mu_2(t_2)d\mu_3(t_3)
$$
for any $\phi_i\in L^1(\Omega_i)$. A thorough look at the definitions of
$U$ and $\Lambda_\varphi$ then reveals that
$u=\Lambda_\varphi$.
\end{proof}

Combining (\ref{5Norm}), (\ref{5tau=lambda}) and Proposition \ref{2OM-B}, we obtain
that any bilinear Schur multiplier $u$ induces a completely bounded
$$
\widetilde{u}\colon \Gamma\bigl(L^2(\Omega_1), L^2(\Omega_2),L^2(\Omega_3)\bigr)
\longrightarrow B(L^2(\Omega_1), L^2(\Omega_3))
$$
and that $\cbnorm{\widetilde{u}}=\norm{\widetilde{u}} (=\norm{u})$.

The next result, which essentially follows from \cite{CLS},
shows that similarly, $S^1$-Schur multipliers
are automatically completely bounded and that their norm
and completely bounded norm coincide.

\begin{theorem}\label{6Schur2}
Let $\varphi\in L^\infty(\Omega_1\times\Omega_2\times\Omega_3)$.
\begin{itemize}
\item [(a)] $\Lambda_\varphi$ is an $S^1$-operator multiplier if and only if
there exist a separable Hilbert space $H$ and two functions
$$
a\in L^{\infty}(\Omega_1 \times \Omega_2 ; H) \qquad
\text{and} \qquad
b\in L^{\infty}(\Omega_2\times \Omega_3 ; H)
$$
such that
\begin{equation}\label{6Facto1}
\varphi(t_1,t_2,t_3)= \left\langle a(t_1,t_2), b(t_2,t_3) \right\rangle
\end{equation}
for a.e. $(t_1,t_2,t_3) \in \Omega_1 \times \Omega_2 \times \Omega_3.$

In this case,
$$
\bignorm{\Lambda_\varphi \colon  S^2 \times S^2 \rightarrow S^1}=
\inf\bigl\{\norm{a}_\infty\norm{b}_\infty\bigr\},
$$
where the infimum runs over all pairs $(a,b)$ verifying the above factorization property.

\item [(b)] If $\Lambda_\varphi$ is an $S^1$-operator multiplier, then
$$
\widetilde{\Lambda_\varphi}\colon
\Gamma\bigl(L^2(\Omega_1), L^2(\Omega_2),L^2(\Omega_3)\bigr)
\longrightarrow S^1\bigl(L^2(\Omega_1), L^2(\Omega_3)\bigr)
$$
is completely  bounded, with $\cbnorm{\widetilde{\Lambda_\varphi}} =
\norm{\widetilde{\Lambda_\varphi}}$.
\end{itemize}
\end{theorem}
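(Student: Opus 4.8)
The plan is to reduce everything to Theorem \ref{6Factorization} and Corollary \ref{6Facto-varphi}, the sole external input being the characterization of bounded bilinear Schur multipliers into $S^1$ from \cite{CLS}. Here $M_i=L^\infty(\Omega_i)$ is abelian, hence injective; in particular $M_2$ is injective, so Theorem \ref{6Factorization} applies, and $M_i'=M_i^{op}=M_i$. By Proposition \ref{6Schur1} and (\ref{5tau=lambda}), $\Lambda_\varphi=\tau_\varphi$ is automatically $(M_3',M_2',M_1')$-modular, so the modularity hypotheses present in Theorem \ref{6Factorization} cost nothing in this setting.

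The key step is a dictionary between the scalar factorization (\ref{6Facto1}) and the operator factorization $\varphi=\sum_i(a_i\otimes 1)(1\otimes b_i)$ appearing in Corollary \ref{6Facto-varphi}. Fixing an orthonormal basis $(e_i)_{i\in I}$ of the separable space $H$, I would set $a_i=\langle a(\,\cdot\,,\,\cdot\,),e_i\rangle\in L^\infty(\Omega_1\times\Omega_2)=M_1\overline{\otimes}M_2^{op}$ and $b_i=\overline{\langle b(\,\cdot\,,\,\cdot\,),e_i\rangle}\in L^\infty(\Omega_2\times\Omega_3)=M_2^{op}\overline{\otimes}M_3$. Taking pointwise products and summing, Parseval's identity turns (\ref{6Facto1}) into $\varphi=\sum_i(a_i\otimes 1)(1\otimes b_i)$, with $w^*$-convergence. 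Since these algebras are abelian, $a_ia_i^*=|a_i|^2$ and $b_i^*b_i=|b_i|^2$, and Parseval again gives $\norm{(a_i)_i}_{R_{I}^{w}}=\norm{a}_\infty$ and $\norm{(b_i)_i}_{C_{I}^{w}}=\norm{b}_\infty$. Thus scalar factorizations of $\varphi$ (with $H$ separable) correspond exactly to the row and column families of Corollary \ref{6Facto-varphi}, norms being preserved.

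Part (a) then follows. For the ``if'' direction, a factorization (\ref{6Facto1}) yields families $A=(a_i)_i$ and $B=(b_i)_i$ as above, and Theorem \ref{6Factorization}(a) shows $\widetilde{\Lambda_\varphi}=\widetilde{u}_{A,B}$ is a completely bounded $S^1$-operator multiplier with $\cbnorm{\widetilde{\Lambda_\varphi}}\leq\norm{a}_\infty\norm{b}_\infty$; in particular $\Lambda_\varphi$ is an $S^1$-operator multiplier and $\bignorm{\Lambda_\varphi\colon S^2\times S^2\to S^1}\leq\inf\{\norm{a}_\infty\norm{b}_\infty\}$. The ``only if'' direction, and the reverse inequality $\inf\{\norm{a}_\infty\norm{b}_\infty\}\leq\bignorm{\Lambda_\varphi\colon S^2\times S^2\to S^1}$, are exactly the content of \cite{CLS}, which I would cite; together they yield the asserted norm identity.

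Part (b) follows by combining the two. If $\Lambda_\varphi$ is an $S^1$-operator multiplier, part (a) furnishes a factorization (\ref{6Facto1}) and hence families $A,B$; Theorem \ref{6Factorization}(a) gives $\cbnorm{\widetilde{\Lambda_\varphi}}\leq\norm{a}_\infty\norm{b}_\infty$, so, taking the infimum over all factorizations, $\cbnorm{\widetilde{\Lambda_\varphi}}\leq\inf\{\norm{a}_\infty\norm{b}_\infty\}=\bignorm{\Lambda_\varphi\colon S^2\times S^2\to S^1}=\norm{\widetilde{\Lambda_\varphi}}$. Since $\norm{\widetilde{\Lambda_\varphi}}\leq\cbnorm{\widetilde{\Lambda_\varphi}}$ always holds, equality follows. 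The one genuine obstacle is the ``only if'' of part (a): extracting any factorization from mere $S^1$-boundedness is the substantive theorem of \cite{CLS} and cannot be recovered from the internal machinery, since Theorem \ref{6Factorization}(b) presupposes complete boundedness, which is precisely what part (b) is meant to establish. Everything else is the orthonormal-basis bookkeeping and the identification of the row and column $w$-norms with the $L^\infty(\,\cdot\,;H)$ norms in the abelian case.
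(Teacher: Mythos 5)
Your proof is correct, but it reaches part (b) by a genuinely different route than the paper. For part (a) both you and the paper simply invoke \cite[Theorem 22]{CLS}; your observation that the easy direction (a factorization (\ref{6Facto1}) forces $\norm{\Lambda_\varphi\colon S^2\times S^2\to S^1}\leq \norm{a}_\infty\norm{b}_\infty$) can be recovered internally is accurate, but the substantive converse is external in both treatments. For part (b), the paper never uses the dictionary you build: it shows instead, via Peller's description of measurable Schur multipliers \cite{Pe}, that part (a) places $\varphi$ in $L^\infty_\sigma\bigl(\Omega_2;\S_{3,1}\bigr)$; then Smith's theorem \cite{Sm} (bimodule maps from $S^\infty$ into $B$ are automatically completely bounded with equal norm) and the Blecher--Smith identification \cite{BS} of $CB(S^\infty,B)$ with the weak$^*$ Haagerup tensor product put $\varphi$ isometrically inside $L^\infty(\Omega_2)\overline{\otimes}\bigl(B(L^2(\Omega_1))\stackrel{w^*h}{\otimes}B(L^2(\Omega_3))\bigr)$, whereupon Theorem \ref{2OM-S1} yields $\cbnorm{\widetilde{\Lambda_\varphi}}=\norm{\widetilde{\Lambda_\varphi}}$. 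You instead expand the CLS factorization $\varphi=\langle a,b\rangle$ along an orthonormal basis of $H$, so that Parseval (together with dominated convergence for the $w^*$-convergence of the uniformly bounded partial sums, and abelianness for the identities $\norm{(a_i)_i}_{R_I^w}=\norm{a}_\infty$ and $\norm{(b_i)_i}_{C_I^w}=\norm{b}_\infty$) produces exactly a weak factorization in the sense of Corollary \ref{6Facto-varphi}, giving $\cbnorm{\widetilde{\Lambda_\varphi}}\leq\inf\bigl\{\norm{a}_\infty\norm{b}_\infty\bigr\}=\norm{\widetilde{\Lambda_\varphi}}$. This is, in effect, the paper's closing Remark run backwards: there the authors use the same basis-expansion dictionary to deduce (a) from (b), while you use it to deduce (b) from (a). As for what each approach buys: your argument is shorter and internal modulo \cite{CLS} --- it avoids Peller, Smith and Blecher--Smith entirely, and needs only the injectivity-free part (a) of Theorem \ref{6Factorization} (through Corollary \ref{6Facto-varphi}); the paper's route, at the price of those external inputs, exhibits the symbol as an essentially bounded field of linear measurable Schur multipliers, tying the bilinear result to the classical linear theory. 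Both proofs are consistent with the paper's remark that no proof of (b) avoiding the arguments of \cite{CLS} is known, since both pass through part (a).
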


\begin{proof}
Part (a) is given by  
\cite[Theorem 22]{CLS}.

Assume that $\Lambda_\varphi$ is an $S^1$-operator multiplier.
Let 
$$
\S_{3,1}\subset B\bigl(S^\infty(L^2(\Omega_3), L^2(\Omega_1)),
B(L^2(\Omega_3), L^2(\Omega_1))\bigr)
$$
be the space of all measurable Schur multipliers
from $L^2(\Omega_3)$ into $L^2(\Omega_1)$, in the sense of 
\cite[Subsection 2.4]{CLS}. Then using the
notation from the latter paper
(to which we refer for more  explanations), part (a)
implies that $\varphi\in L^\infty_\sigma\bigl(\Omega_2;\S_{3,1}\bigr)$.
Indeed this follows from Peller's description of
measurable Schur multipliers given by \cite[Theorem 1]{Pe}
(see also \cite[Theorem 3.3]{Sp}, 
\cite[Theorem 23]{CLS} and \cite{H}).
Measurable Schur multipliers are
$(L^\infty(\Omega_1),L^\infty(\Omega_3))$-bimodule maps hence by 
\cite[Theorem 2.1]{Sm}, any element of $\S_{3,1}$ is a completely
bounded map, whose completely bounded norm coincides with its usual norm.
Thus we have
$$
\S_{3,1}\,\subset\, CB
\bigl(S^\infty(L^2(\Omega_3), L^2(\Omega_1)),
B(L^2(\Omega_3), L^2(\Omega_1))\bigr)
\qquad\hbox{isometrically}.
$$
We deduce that 
$$
\varphi\in
L^\infty_\sigma\bigl(\Omega_2;CB
\bigl(S^\infty(L^2(\Omega_3), L^2(\Omega_1)),
B(L^2(\Omega_3), L^2(\Omega_1))\bigr)\bigr).
$$
Recall that by \cite[Theorem 2.2]{BS} (see also Theorem 4.2
in the latter paper), we have a $w^*$-continuous isometric identification
$$
CB
\bigl(S^\infty(L^2(\Omega_3), L^2(\Omega_1)),
B(L^2(\Omega_3), L^2(\Omega_1))\bigr)
\simeq B(L^2(\Omega_1))\stackrel{w^*h}{\otimes}
B(L^2(\Omega_3)).
$$
Hence we obtain that $\varphi$ belongs to  
$L^\infty_\sigma\bigl(\Omega_2;B(L^2(\Omega_1))\stackrel{w^*h}{\otimes}
B(L^2(\Omega_3))\bigr)$. Equivalently,
$\varphi$ belongs to $L^\infty(\Omega_2)\overline{\otimes}
\bigl( B(L^2(\Omega_1))\stackrel{w^*h}{\otimes}
B(L^2(\Omega_3))\bigr)$. Moreover 
the norm of
$\Lambda_\varphi \colon  S^2 \times S^2 \rightarrow S^1$ is equal to
the norm of $\varphi$ in the latter space.

Now applying Theorem \ref{2OM-S1}, we deduce that
$\Lambda_\varphi \colon  S^2 \times S^2 \rightarrow S^1$
is completely bounded, with
$\cbnorm{\widetilde{\Lambda_\varphi}} =
\norm{\widetilde{\Lambda_\varphi}}$.
\end{proof}

\begin{remark}
In Theorem \ref{6Schur2} above, (a) can be deduced from (b) as follows. 
Assume that $\Lambda_\varphi$ is a completely bounded $S^1$-operator multiplier,
with completely bounded norm $<1$.
By Proposition \ref{6Schur1} and (\ref{5tau=lambda}), $\Lambda_\varphi=\tau_\varphi$ 
is $(L^\infty(\Omega_3),L^\infty(\Omega_2),L^\infty(\Omega_1))$-modular. Further
$L^\infty(\Omega_2)$ is injective. Hence by Corollary \ref{6Facto-varphi}, 
there exist
an index set $I$, a family $(a_i)_{i\in I}$ in $L^{\infty}(\Omega_1 \times \Omega_2)$
and a family $(b_i)_{i\in I}$ in $L^{\infty}(\Omega_2 \times \Omega_3)$ such that
$$
\sum_{i\in I} \vert a_i\vert^2\, < 1
\qquad\hbox{and}\qquad
\sum_{i\in I} \vert b_i\vert^2\, < 1
$$
almost everywhere on $\Omega_1 \times \Omega_2$ and on 
$\Omega_2 \times \Omega_3$, respectively, and
$\varphi=\sum_{i\in I} (a_i\otimes 1)(1\otimes b_i)$ in
the $w^*$-topology of $L^\infty(\Omega_1\times\Omega_2\times\Omega_3)$.
Since we assumed that the three measure spaces $(\Omega_j,\mu_j)$ are
separable, it follows from the proof of Corollary \ref{6Facto-varphi}
that $I$ can be chosen to be a countable set.
Then we have
\begin{equation}\label{6Facto2}
\varphi(t_1,t_2,t_3)\, = \,\sum_{i\in I} a_i(t_1,t_2)  b_i(t_2,t_3) 
\end{equation}
for a.e. $(t_1,t_2,t_3) \in \Omega_1 \times \Omega_2 \times \Omega_3$.
Further we may define $a\in L^{\infty}(\Omega_1 \times \Omega_2 ; \ell^2_I)$
and $b\in L^{\infty}(\Omega_2\times \Omega_3 ; \ell^2_I)$
by $a(t_1,t_2) =(a_i(t_1,t_2))_{i\in I}$ and 
$b(t_2,t_3) =(b_i(t_2,t_3))_{i\in I}$, respectively.
Then we both have $\norm{a}_\infty\leq 1$ and $\norm{b}_\infty\leq 1$,
and the identity (\ref{6Facto2}) yields (\ref{6Facto1}), with $H=\ell^2_I$. 

Note however we do not know any direct proof of 
Theorem \ref{6Schur2} (b), not using some of the 
arguments from \cite{CLS}.
\end{remark}

\smallskip
\noindent
{\bf Acknowledgements.}
The first author was supported by the French
``Investissements d'Avenir" program,
project ISITE-BFC (contract ANR-15-IDEX-03).

We warmly thank the referee for the careful reading and
several valuable suggestions which improved the presentation
of the paper.

\smallskip

\bigskip


\begin{thebibliography}{99}
\bibitem{AP} A. B. Aleksandrov, V. V. Peller,
{\it Multiple operator integrals, Haagerup and Haagerup-like tensor products, and operator ideals},
Bull. Lond. Math. Soc. 49 (2017), no. 3, 463-479.
\bibitem{B} D. Blecher, {\it A new approach to Hilbert $C^*$-modules}, Math. Ann. 307 (1997), no. 2, 253-290.
\bibitem{BLM} D. Blecher, C. Le Merdy, {\it Operator algebras and their modules -- an operator space approach},
Oxford University Press, 2004.
\bibitem{BS} D. Blecher, R. Smith, {\it The dual of the Haagerup tensor product}, J. London
Math. Soc. 4 (1992), 126-144.
\bibitem{CLPST} C. Coine, C. Le Merdy, D. Potapov, F. Sukochev, A. Tomskova,
{\it Resolution of Peller's problem concerning Koplienko-Neidhardt trace formulae},
Proc. Lond. Math. Soc. (3) 113 (2016), no. 2, 113-139.
\bibitem{CLS} C. Coine, C. Le Merdy, F. Sukochev, {\it When do triple operator integrals take value
in the trace class?}, Preprint 2017 (https://arxiv.org/pdf/1706.01662.pdf).
\bibitem{ER} E. Effros, Z.-J. Ruan, {\it Operator spaces}, Oxford University Press, 2000.
\bibitem{DU} J. Diestel, J. Uhl, {\it Vector measures}, Mathematical surveys 15, Amer. Math. Soc., 1979.
\bibitem{EK} E. G. Effros, A. Kishimoto,
{\it Module maps and Hochschild-Johnson cohomology},
Indiana Univ. Math. J. 36 (1987), no. 2, 257-276.
\bibitem{EL} E. Effros, C. Lance, {\it Tensor products of operator algebras},
Adv. Math. 25 (1977), no. 1, 1-34.
\bibitem{H} U. Haagerup, {\it Decomposition of completely bounded maps on
operator algebras}, Unpublished manuscript (1980).
\bibitem{JTT}  K. Juschenko, I. G. Todorov and L. Turowska,
{\it Multidimensional operator multipliers},
Trans. Amer. Math. Soc. 361 (2009), no. 9, 4683-4720.
\bibitem{KS} E. Kissin, V. S. Shulman, {\it
Operator multipliers}, Pacific J. Math. 227 (2006), no. 1, 109-141.
\bibitem{Pa} W. Paschke, {\it Inner product modules over $B^*$-algebras},
Trans. Amer. Math. Soc. 182 (1973), 443-468.
\bibitem{Pe} V. V. Peller, {\it Hankel operators in the theory of perturbations of 
unitary and selfadjoint operators}, 
Funktsional. Anal. i Prilozhen. 19 (1985), no. 2, 37-51. 
\bibitem{P1} G. Pisier, {\it Non commutative vector valued $L_p$-spaces and completely $p$-summing maps},
Ast\'erisque 247, Soc. Math. France, 1998.
\bibitem{P2} G. Pisier, {\it Introduction to operator space theory}, Lecture Note Series 294, Cambridge University Press, 2003.
\bibitem{Roy} J. Roydor, {\it Subalgebras of $C(\Omega,M_n)$ and their modules},
Illinois J. Math. 49 (2005), no. 4, 1019-1038.
\bibitem{SS} A. Sinclair, R. R. Smith,
{\it Factorization of completely bounded bilinear operators and injectivity},
J. Funct. Anal. 157 (1998), no. 1, 62-87.
\bibitem{Sm} R. R. Smith, {\it Completely bounded module maps and the Haagerup tensor product},
J. Funct. Anal. 102 (1991), no. 1, 156-175.
\bibitem{Sp} N. Spronk, {\it Measurable Schur multipliers and completely bounded multipliers of the Fourier algebras},
Proc. London Math. Soc. (3) 89 (2004), no. 1, 161-192.
\bibitem{To} J. Tomiyama, {\it On the projection of norm one in $W^*$-algebras},
Proc. Japan Acad. 33 (1957), no. 10, 608-612.
\bibitem{W} S. Wassermann, {\it Injective $W^*$-algebras}, Math. Proc. Cambridge Philos. Soc. 82 (1977), 39-47.
\end{thebibliography}
\end{document}